\newtheorem{theorem}{Theorem}[section]
\newtheorem{lemma}[theorem]{Lemma}
\newtheorem{corollary}[theorem]{Corollary}
\theoremstyle{definition}
\newtheorem{definition}[theorem]{Definition}
\newtheorem{proposition}[theorem]{Proposition}
\newtheorem{example}[theorem]{Example}
\theoremstyle{remark}
\newtheorem{remark}[theorem]{Remark}
\numberwithin{equation}{section}
\newcommand{\q}{\mathfrak{q}}
\newcommand{\HI}{\mathfrak{H}}
\newcommand{\C}{\mathbb{C}}
\newcommand{\B}{\mathcal{B}}
\newcommand{\qu}{\mathfrak{q}}
\newcommand{\pu}{\mathfrak{p}}
\newcommand{\oqu}{\overline{\mathfrak{q}}}
\newcommand{\IV}{I_{V_H^R}}
\newcommand{\quat}{\mathbb H}
\newcommand{\R}{\Bbb R}
\newcommand{\Z}{\mathbb Z}
\newcommand{\mc}{\mathcal}
\newcommand{\be}{\begin{equation}}
\newcommand{\en}{\end{equation}}
\newcommand{\D}{{\mc D}}
\newcommand{\N}{\mathbb N}
\newcommand{\bedefin}{\begin{defi}}
	\newcommand{\findefi}{\end{defi} \medskip}
\newcommand{\betheo}{\begin{theorem}$\!\!${\bf \,\,\,}}
	\newcommand{\entheo}{\end{theorem}}
\newcommand{\enth}{\end{theorem}}
\newcommand{\becor}{\begin{cor}$\!\!${\bf .}}
	\newcommand{\encor}{\end{cor}}
\newcommand{\belem}{\begin{lem}$\!\!${\bf .}}
	\newcommand{\enlem}{\end{lem}}
\newcommand{\bea}{\begin{eqnarray}}
\newcommand{\ena}{\end{eqnarray}}
\newcommand{\beano}{\begin{eqnarray*}}
	\newcommand{\enano}{\end{eqnarray*}}
\newcommand{\bee}{\begin{enumerate}}
	\newcommand{\ene}{\end{enumerate}}
\newcommand{\bei}{\begin{itemize}}
	\newcommand{\eni}{\end{itemize}}
\newcommand{\betab}{\begin{tabular}}
	\newcommand{\entab}{\end{tabular}}
\newcommand{\Iop}{{\mathbb{I}_{V_{\mathbb{H}}^{R}}}}
\newcommand{\bk}{\mathbf k}
\newcommand{\bi}{\mathbf i}
\newcommand{\bj}{\mathbf j}
\newcommand{\apo}{\sigma_{ap}^S}
\newcommand{\vr}{V_\quat^R}
\newcommand{\ur}{U_\quat^R}
\newcommand{\ra}{\text{ran}}
\newcommand{\kr}{\text{ker}}
\newcommand{\sus}{\sigma_{su}^S}
\newcommand{\rka}{\rho_{ka}^S}
\newcommand{\ska}{\sigma_{ka}^S}
\newcommand{\ov}{\overline{V}}
\newcommand{\oa}{\overline{A}}
\newcommand{\op}{\overline{\phi}}
\newcommand{\lr}{\rho_A^S}
\newcommand{\ls}{\sigma_A^S}
\newcommand{\rgka}{\rho_{gk}^S}
\newcommand{\sgka}{\sigma_{gk}^S}
\newcommand{\res}{\rho_{es}^S}
\newcommand{\ses}{\sigma_{es}^S}
\begin{document}
\title[Kato S-spectrum]{Kato S-spectrum in the quaternionic setting}
\author{B. Muraleetharan$^{\dagger}$ and 
K. Thirulogasanthar$^{\ddagger}$}
\address{$^{\dagger}$ Department of mathematics and Statistics, University of Jaffna, Thirunelveli, Sri Lanka.}
\address{$^{\ddagger}$ Department of Computer Science and Software Engineering, Concordia University, 1455 De Maisonneuve Blvd. West, Montreal, Quebec, H3G 1M8, Canada.}
\email{bbmuraleetharan@jfn.ac.lk and santhar@gmail.com  
}
\subjclass{Primary 47A10, 47A11, 47A25}
\date{\today}
\date{\today}
\begin{abstract}
In a right quaternionic Hilbert space, for a bounded right linear operator, the Kato S-spectrum is introduced and studied to a certain extent. In particular, it is shown that the Kato S-spectrum is a non-empty compact subset of the S-spectrum and it contains the boundary of the S-spectrum.  Using right-slice regular functions, local S-spectrum, at a point of a right quaternionic Hilbert space, and the local spectral subsets are introduced and studied.  The S-surjectivity spectrum and its connections to the Kato S-spectrum, approximate S-point spectrum and local S-spectrum are investigated. The generalized Kato S-spectrum is introduced and it is shown that the generalized Kato S-spectrum is a compact subset of the S-spectrum.
\end{abstract}
\keywords{Quaternions, Quaternionic Hilbert spaces, S-spectrum, semi-regular operator, approximate S-point spectrum, surjectivity $S$-spectrum, Kato S-spectrum.}
\maketitle
\pagestyle{myheadings}
\section{Introduction}
In complex spectral theory, the spectrum of a bounded linear operator on a Hilbert space or Banach space can be  divided into several subsets depending on the purpose of the investigation. Further, some of these subsets can also be expressed and analyzed  in terms of the local spectrum at a point of the Hilbert space or Banach space. The local spectral theory is closely linked to vector-valued analytic functions. As one of these subsets, the so-called Kato spectrum was first introduced by Apostol for bounded linear operators on a Hilbert space \cite{Apo}, and then investigated by several authors on Banach spaces. The Kato spectrum has close link to surjectivity spectrum and approximate point spectrum under certain assumptions. For a detail account on the complex theory see \cite{Ai, La,Ben}, and the many references therein.\\

In the complex setting, in a complex Hilbert space $\HI$, for a bounded linear operator, $A$, the spectrum is defined as the set of complex numbers $\lambda$ for which the operator  $Q_\lambda(A)=A-\lambda \mathbb{I}_\HI$, where $\mathbb{I}_{\HI}$ is the identity operator on $\HI$,  is not invertible. In the quaternionic setting, let $\vr$ be a separable right quaternionic Hilbert space,  $A$ be a bounded right linear operator, and $R_\qu(A)=A^2-2\text{Re}(\qu)A+|\qu|^2\Iop$, with $\qu\in\quat$, the set of all quaternions, be the pseudo-resolvent operator. The S-spectrum is defined as the set of quaternions $\qu$ for which $R_\qu(A)$ is not invertible. In the complex case various classes of spectra, such as approximate point spectrum, essential spectrum, Weyl spectrum, Browder spectrum, Kato spectrum, surjectivity spectrum etc. are defined by placing restrictions on the operator $Q_\lambda(A)$ \cite {Ai, Kub, La}. In this regard, in the quaternionic setting, in order to define similar classes of spectra it is natural to place the same restrictions to the operator $R_\qu(A)$.\\

Due to the non-commutativity, in the quaternionic case  there are three types of  Hilbert spaces: left, right, and two-sided, depending on how vectors are multiplied by scalars. This fact can entail several problems. For example, when a Hilbert space $\mathcal H$ is one-sided (either left or right) the set of linear operators acting on it does not have a linear structure. Moreover, in a one sided quaternionic Hilbert space, given a linear operator $A$ and a quaternion $\mathfrak{q}\in\quat$, in general we have that $(\mathfrak{q} A)^{\dagger}\not=\overline{\mathfrak{q}} A^{\dagger}$ (see \cite{Ad,Mu} for details). These restrictions can severely prevent the generalization  to the quaternionic case of results valid in the complex setting. Even though most of the linear spaces are one-sided, it is possible to introduce a notion of multiplication on both sides by fixing an arbitrary Hilbert basis of $\mathcal H$.  This fact allows to have a linear structure on the set of linear operators, which is a minimal requirement to develop a full theory \cite{MTS,BT}. However, in this manuscript we develop the theory on $\vr$ without introducing a left multiplication on it. \\

As far as we know, the local $S$-spectral theory, Kato $S$-spectrum and the surjectivity $S$-spectrum  have not been studied in the quaternionic setting yet. In this regard, in this note we investigate these spectra in the quaternionic setting. The surjectivity $S$-spectrum  has close connection with the approximate $S$-point spectrum, the local $S$-spectrum and Kato $S$-spectrum. In the complex case, the local spectrum, at a point in $\HI$, is defined in terms of operator-valued analytic functions \cite{Ai, La}. There have been several attempts to define analyticity in the quaternionic setting by mimicking the complex setting \cite{Am}. However, the most promising, and recent attempt
was the slice-regularity, that is, the slice-regular functions are the quaternionic counterpart of the complex analytic functions \cite{NFC,ghimorper, Gra2, GSS,S}. In this regard, we define the local $S$-spectrum in terms of slice-regular functions.\\

Apart from the non-commutativity of quaternions, due to the structure of the operator $R_\qu(A)$ we have experienced severe difficulties in extending several results valid in the complex setting to quaternions. For example, for $\lambda, \mu\in\C$, $Q_\lambda(A)=Q_\mu(A)-(\lambda-\mu)\mathbb{I}_{\HI}$ and this equality plays an important role in proofs of several local spectral results \cite{Ai, La}. Unfortunately, a similar equality, in a satisfactory way, could not be obtained for the operator $R_\qu(A)$ by us. Even if we restrict $R_\qu(A)$ to a complex slice within quaternions $Q_\lambda(A)\not=R_\lambda(A)$, therefore, we cannot expect all the results valid in the complex setting to hold for quaternions. However, by imposing additional conditions analogous results may be obtained.\\

The article is organized as follows. In section 2 we introduce the set of quaternions, quaternionic Hilbert spaces and their bases, and slice-regularity as needed for the development of this article, which may not be familiar to a broad range of audience. In section 3 we define and investigate, as needed, right linear operators and their properties. In section 3.1 we deal with the S-spectrum and its major partitions. In section 4 we study the surjectivity $S$-spectrum and its connection to approximate $S$-point spectrum and to the $S$-spectrum. We also characterize the $S$-spectrum in terms of the spectral radius and the lower bound of a bounded right linear operator. In section 5 we study hyper-kernel, hyper range, semi-regular operators, algebraic core and analytic core of an operator. The proofs of most of the results in this section follow its complex counterpart. In this respect we give references for complex proofs. In section 6 we study local $S$-spectrum, local $S$-spectral subspaces and the single-valued extension property (SVEP). In particular, we show that when a quaternionic right linear operator $A$ has SVEP then the $S$-surjectivity spectrum coincides with the $S$-spectrum while its adjoint $A^\dagger$ has SVEP then $S$-approximate point spectrum coincides with the $S$-spectrum. In section 7 we introduce and study the Kato $S$-spectrum. In particular, we show that the Kato $S$-spectrum is a compact subset of the $S$-spectrum and it contains the boundary of the $S$-spectrum. We also examine connections between Kato $S$-spectrum and the $S$-surjectivity and $S$-approximate point spectra. It is also shown that if operators $A$ and $A^\dagger$ have SVEP then the Kato $S$-spectrum coincides with the $S$-spectrum. In section 8 we introduce the generalized Kato decomposition, generalized Kato S-spectrum and essentially semi-regular $S$-spectrum. In particular, we show that generalized Kato S-spectrum and essentially semi-regular $S$-spectrum are compact subsets of the $S$-spectrum. Section 9 ends the manuscript with a conclusion.

\section{Mathematical preliminaries}
In order to make the paper self-contained, we recall some facts about quaternions which may not be well-known.  For details we refer the reader to \cite{Ad, NFC,ghimorper,Vis}.
\subsection{Quaternions}
Let $\quat$ denote the field of all quaternions and $\quat^*$ the group (under quaternionic multiplication) of all invertible quaternions. A general quaternion can be written as
$$\qu = q_0 + q_1 \bi + q_2 \bj + q_3 \bk, \qquad q_0 , q_1, q_2, q_3 \in \mathbb R, $$
where $\bi,\bj,\bk$ are the three quaternionic imaginary units, satisfying
$\bi^2 = \bj^2 = \bk^2 = -1$ and $\bi\bj = \bk = -\bj\bi,  \; \bj\bk = \bi = -\bk\bj,
\; \bk\bi = \bj = - \bi\bk$. The quaternionic conjugate of $\qu$ is
$$ \overline{\qu} = q_0 - \bi q_1 - \bj q_2 - \bk q_3 , $$
while $\vert \qu \vert=(\qu \overline{\qu})^{1/2} $ denotes the usual norm of the quaternion $\qu$.
If $\qu$ is a non-zero element, it has the inverse
$
\qu^{-1} =  \dfrac {\overline{\qu}}{\vert \qu \vert^2 }.$
Finally, the set
\begin{eqnarray*}
\mathbb{S}&=&\{I=x_1 \bi+x_2\bj+x_3\bk~\vert
~x_1,x_2,x_3\in\mathbb{R},~x_1^2+x_2^2+x_3^2=1\},
\end{eqnarray*}
contains all the elements whose square is $-1$. It is a $2$-dimensional sphere in $\mathbb H$.
\subsection{Quaternionic Hilbert spaces}
In this subsection we  discuss right quaternionic Hilbert spaces. For more details we refer the reader to \cite{Ad,ghimorper,Vis}.
\subsubsection{Right quaternionic Hilbert Space}
Let $V_{\quat}^{R}$ be a vector space under right multiplication by quaternions.  For $\phi,\psi,\omega\in V_{\quat}^{R}$ and $\qu\in \quat$, the inner product
$$\langle\cdot\mid\cdot\rangle_{V_{\quat}^{R}}:V_{\quat}^{R}\times V_{\quat}^{R}\longrightarrow \quat$$
satisfies the following properties
\begin{enumerate}
	\item[(i)]
	$\overline{\langle \phi\mid \psi\rangle_{V_{\quat}^{R}}}=\langle \psi\mid \phi\rangle_{V_{\quat}^{R}}$
	\item[(ii)]
	$\|\phi\|^{2}_{V_{\quat}^{R}}=\langle \phi\mid \phi\rangle_{V_{\quat}^{R}}>0$ unless $\phi=0$, a real norm
	\item[(iii)]
	$\langle \phi\mid \psi+\omega\rangle_{V_{\quat}^{R}}=\langle \phi\mid \psi\rangle_{V_{\quat}^{R}}+\langle \phi\mid \omega\rangle_{V_{\quat}^{R}}$
	\item[(iv)]
	$\langle \phi\mid \psi\qu\rangle_{V_{\quat}^{R}}=\langle \phi\mid \psi\rangle_{V_{\quat}^{R}}\qu$
	\item[(v)]
	$\langle \phi\qu\mid \psi\rangle_{V_{\quat}^{R}}=\overline{\qu}\langle \phi\mid \psi\rangle_{V_{\quat}^{R}}$
\end{enumerate}
where $\overline{\qu}$ stands for the quaternionic conjugate. It is always assumed that the
space $V_{\quat}^{R}$ is complete under the norm given above and separable. Then,  together with $\langle\cdot\mid\cdot\rangle$ this defines a right quaternionic Hilbert space. Quaternionic Hilbert spaces share many of the standard properties of complex Hilbert spaces. All the spaces considered in this manuscript are right quaternionic Hilbert spaces.

The next two propositions can be established following the proof of their complex counterparts.
\begin{proposition}\label{P-1}\cite{ghimorper,BT}
Let $\mathcal{O}=\{\varphi_{k}\,\mid\,k\in N\}$
be an orthonormal subset of $V_{\quat}^{R}$, where $N$ is a countable index set. Then following conditions are pairwise equivalent:
\begin{itemize}
\item [(a)] The closure of the linear combinations of elements in $\mathcal O$ with coefficients on the right is $V_{\quat}^{R}$.
\item [(b)] For every $\phi,\psi\in V_{\quat}^{R}$, the series $\sum_{k\in N}\langle\phi\mid\varphi_{k}\rangle_{V_{\quat}^{R}}\langle\varphi_{k}\mid\psi\rangle_{V_{\quat}^{R}}$ converges absolutely and it holds:
$$\langle\phi\mid\psi\rangle_{V_{\quat}^{R}}=\sum_{k\in N}\langle\phi\mid\varphi_{k}\rangle_{V_{\quat}^{R}}\langle\varphi_{k}\mid\psi\rangle_{V_{\quat}^{R}}.$$
\item [(c)] For every  $\phi\in V_{\quat}^{R}$, it holds:
$$\|\phi\|^{2}_{V_{\quat}^{R}}=\sum_{k\in N}\mid\langle\varphi_{k}\mid\phi\rangle_{V_{\quat}^{R}}\mid^{2}.$$
\item [(d)] $\mathcal{O}^{\bot}=\{0\}$.
\end{itemize}
\end{proposition}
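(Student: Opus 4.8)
The plan is to establish the cycle of implications $(a)\Rightarrow(b)\Rightarrow(c)\Rightarrow(d)\Rightarrow(a)$, following the classical Hilbert space proof but keeping track of the right-module structure: every orthonormal expansion must be written with its scalar coefficients on the right, i.e.\ in the form $\sum_{k\in N}\varphi_{k}\langle\varphi_{k}\mid\phi\rangle_{V_{\quat}^{R}}$, and every manipulation of the inner product has to respect properties (iv) and (v), in particular the quaternionic conjugate that appears whenever a scalar is pulled out of the left slot. To lighten notation I will suppress the subscript $V_{\quat}^{R}$ on $\langle\cdot\mid\cdot\rangle$ and on $\|\cdot\|$.

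First I would set up the two tools the whole argument rests on. (1) \emph{A Bessel-type inequality.} For a finite $F\subseteq N$ and $a_{k}:=\langle\varphi_{k}\mid\phi\rangle$, expanding $\big\|\phi-\sum_{k\in F}\varphi_{k}a_{k}\big\|^{2}$ by (iii)--(v) and using $\langle\varphi_{k}\mid\varphi_{j}\rangle=\delta_{kj}$ gives $\big\|\phi-\sum_{k\in F}\varphi_{k}a_{k}\big\|^{2}=\|\phi\|^{2}-\sum_{k\in F}|a_{k}|^{2}\ge 0$; here the cross terms collapse to the \emph{real} numbers $|a_{k}|^{2}$ because $\langle\phi\mid\varphi_{k}\rangle=\overline{a_{k}}$ and $\overline{a_{k}}a_{k}=|a_{k}|^{2}$, so non-commutativity never interferes. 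Taking the supremum over $F$ yields $\sum_{k\in N}|\langle\varphi_{k}\mid\phi\rangle|^{2}\le\|\phi\|^{2}<\infty$. (2) \emph{Convergence of the expansion.} By (1) and completeness of $V_{\quat}^{R}$, the net of finite partial sums of $\sum_{k}\varphi_{k}\langle\varphi_{k}\mid\phi\rangle$ is Cauchy (the squared norm of a finite tail equals the corresponding tail of a convergent real series), hence converges to some $\phi^{\sharp}\in V_{\quat}^{R}$; and by continuity of the inner product --- the quaternionic Cauchy--Schwarz inequality $|\langle\phi\mid\psi\rangle|\le\|\phi\|\,\|\psi\|$, which I would cite from \cite{Ad,Vis} or reprove by restricting to a complex slice --- one gets $\langle\varphi_{j}\mid\phi-\phi^{\sharp}\rangle=0$ for every $j\in N$.

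With these in hand the four implications are short. $(a)\Rightarrow(b)$: under $(a)$ the vector $\phi-\phi^{\sharp}$ is orthogonal to every right-linear combination of the $\varphi_{k}$, hence to its closure, which is all of $V_{\quat}^{R}$, so $\phi=\phi^{\sharp}=\sum_{k}\varphi_{k}\langle\varphi_{k}\mid\phi\rangle$; writing $\psi$ the same way and expanding $\langle\phi\mid\psi\rangle$ with (iv)--(v) and orthonormality produces $\sum_{k}\langle\phi\mid\varphi_{k}\rangle\langle\varphi_{k}\mid\psi\rangle$, the absolute convergence being the (real) Cauchy--Schwarz inequality applied to the square-summable sequences $(|\langle\varphi_{k}\mid\phi\rangle|)_{k}$ and $(|\langle\varphi_{k}\mid\psi\rangle|)_{k}$. $(b)\Rightarrow(c)$: put $\psi=\phi$ and use (ii). $(c)\Rightarrow(d)$: if $\psi\in\mathcal{O}^{\bot}$ then $(c)$ forces $\|\psi\|^{2}=0$, so $\psi=0$. $(d)\Rightarrow(a)$: for an arbitrary $\phi$, tool (2) gives $\phi-\phi^{\sharp}\in\mathcal{O}^{\bot}=\{0\}$, so $\phi=\phi^{\sharp}$ lies in the closed right-linear span of $\mathcal{O}$.

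The hard part is not any single implication but the bookkeeping needed to be sure the classical computations survive the loss of commutativity: keeping all coefficients on the correct (right) side, tracking the conjugations forced by (v), and justifying that the inner product may be moved inside the infinite sums --- joint continuity via quaternionic Cauchy--Schwarz, together with the unconditional convergence of the expansion (which holds because $N$ is countable and the associated nonnegative real series converges). Once those points are settled, the proof is essentially a line-by-line transcription of the complex one, as the statement itself indicates.
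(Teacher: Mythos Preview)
Your proposal is correct and matches what the paper does: the paper states this proposition without proof, remarking only that it ``can be established following the proof of [its] complex counterpart,'' and cites \cite{ghimorper,BT}. Your cycle $(a)\Rightarrow(b)\Rightarrow(c)\Rightarrow(d)\Rightarrow(a)$, built on the quaternionic Bessel inequality and the convergence of $\sum_{k}\varphi_{k}\langle\varphi_{k}\mid\phi\rangle$, is precisely that transcription, with the right-module bookkeeping handled correctly.
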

\begin{definition}
The set $\mathcal{O}$ as in proposition \ref{P-1} is called a {\em Hilbert basis} for $V_{\quat}^{R}$.
\end{definition}
\begin{proposition}\label{P2}
Every separable quaternionic Hilbert space $V_{\quat}^{R}$ has a Hilbert basis. All the Hilbert bases of $V_{\quat}^{R}$ have the same cardinality.

Furthermore, if $\mathcal{O}$ is a Hilbert basis of $V_{\quat}^{R}$, then every  $\phi\in V_{\quat}^{R}$ can be uniquely decomposed as follows:
$$\phi=\sum_{k\in N}\varphi_{k}\langle\varphi_{k}\mid\phi\rangle_{V_{\quat}^{R}},$$
where the series $\sum_{k\in N}\varphi_k\langle\varphi_{k}\mid\phi\rangle_{V_{\quat}^{R}}$ converges absolutely in $V_{\quat}^{R}$.
\end{proposition}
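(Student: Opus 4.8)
The plan is to transcribe the classical Hilbert-space arguments, the only genuine subtlety being to keep every quaternionic scalar on the correct (right) side of the vectors and to route each assertion through the equivalences (a)--(d) of Proposition~\ref{P-1}. \textbf{Existence.} Since $V_{\quat}^{R}$ is separable, I would take a countable dense set, discard each vector that lies in the right-linear span of its predecessors, and run the Gram--Schmidt procedure on what remains; normalization $\varphi\mapsto\varphi\,\|\varphi\|_{V_{\quat}^{R}}^{-1}$ and the successive subtractions use only right multiplication and properties (i)--(v), so the procedure goes through verbatim. The resulting orthonormal set $\mathcal O$ has dense right-linear span, i.e.\ satisfies (a), hence is a Hilbert basis. (In the non-separable case one would instead invoke Zorn's Lemma to obtain a maximal orthonormal set, whose maximality forces $\mathcal O^{\perp}=\{0\}$, which is condition (d).)

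\textbf{Decomposition and uniqueness.} Fix a Hilbert basis $\mathcal O=\{\varphi_k\mid k\in N\}$ and $\phi\in V_{\quat}^{R}$. Expanding $\big\|\phi-\sum_{k\in F}\varphi_k\langle\varphi_k\mid\phi\rangle_{V_{\quat}^{R}}\big\|_{V_{\quat}^{R}}^{2}\ge 0$ over a finite $F\subset N$ and using orthonormality gives Bessel's inequality $\sum_{k\in N}\abs{\langle\varphi_k\mid\phi\rangle_{V_{\quat}^{R}}}^{2}\le\|\phi\|_{V_{\quat}^{R}}^{2}$. Consequently the net of finite partial sums $\sum_{k\in F}\varphi_k\langle\varphi_k\mid\phi\rangle_{V_{\quat}^{R}}$ is Cauchy, so it converges (independently of the enumeration, which is the asserted absolute convergence) to some $\phi'\in V_{\quat}^{R}$; by continuity of the inner product, $\langle\varphi_j\mid\phi-\phi'\rangle_{V_{\quat}^{R}}=0$ for every $j$, whence $\phi-\phi'\in\mathcal O^{\perp}=\{0\}$ by Proposition~\ref{P-1}(d) and $\phi=\sum_{k\in N}\varphi_k\langle\varphi_k\mid\phi\rangle_{V_{\quat}^{R}}$. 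For uniqueness, if also $\phi=\sum_{k\in N}\varphi_kc_k$ with $c_k\in\quat$, then pairing on the left with $\varphi_j$ and using (iii), (iv) and continuity yields $c_j=\langle\varphi_j\mid\phi\rangle_{V_{\quat}^{R}}$.

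\textbf{Equality of cardinalities.} Separability forces every orthonormal subset to be at most countable: two distinct unit vectors are at distance $\sqrt2$, so an uncountable orthonormal family would preclude a countable dense set. Hence any two Hilbert bases $\mathcal O,\mathcal O'$ are both finite or both countably infinite; in the infinite case they share the cardinality $\aleph_0$. If $\mathcal O=\{\varphi_1,\dots,\varphi_n\}$ is finite, the decomposition just proved makes $\phi\mapsto(\langle\varphi_1\mid\phi\rangle_{V_{\quat}^{R}},\dots,\langle\varphi_n\mid\phi\rangle_{V_{\quat}^{R}})$ a bijection of $V_{\quat}^{R}$ onto $\quat^{n}$, so $V_{\quat}^{R}$ admits no more than $n$ right-linearly independent vectors; thus $\mathcal O'$ is finite with $\abs{\mathcal O'}\le n$, and symmetry gives $\abs{\mathcal O'}=n$.

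\textbf{Where the work is.} No step is deep once Proposition~\ref{P-1} is available; the only points demanding care are the side on which quaternions act --- in Gram--Schmidt, in the coefficient extraction for uniqueness (properties (iv)--(v)), and in the finite-dimensional comparison, which must be phrased in terms of right-linear independence rather than left. The argument is otherwise a faithful copy of the complex proof, which is why it is stated here without a detailed write-up.
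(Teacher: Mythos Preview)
Your proof is correct and aligns with the paper's own treatment: the paper does not actually write out a proof of this proposition, but instead remarks (just before Proposition~\ref{P-1}) that both Proposition~\ref{P-1} and Proposition~\ref{P2} ``can be established following the proof of their complex counterparts.'' Your write-up supplies precisely those details---Gram--Schmidt for existence, Bessel plus $\mathcal O^\perp=\{0\}$ for the decomposition, the $\sqrt{2}$-separation argument for countability---while correctly keeping all quaternionic scalars on the right, which is exactly the care the paper flags as the only real issue.
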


It should be noted that once a Hilbert basis is fixed, every left (resp. right) quaternionic Hilbert space also becomes a right (resp. left) quaternionic Hilbert space \cite{ghimorper,Vis}.

The field of quaternions $\quat$ itself can be turned into a left quaternionic Hilbert space by defining the inner product $\langle \qu \mid \qu^\prime \rangle = \qu \overline{\qu^{\prime}}$ or into a right quaternionic Hilbert space with  $\langle \qu \mid \qu^\prime \rangle = \overline{\qu}\qu^\prime$.
\begin{proposition}\cite{Gra1}\label{P1}
	For any non-real quaternion $\qu\in \quat\setminus\mathbb{R}$, there exist, and are unique, $x,y\in\mathbb{R}$ with $y>0$, and $I\in\mathbb{S}$ such that $\qu=x+yI$.
\end{proposition}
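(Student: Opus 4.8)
The plan is to produce $x,y,I$ explicitly from the real and imaginary parts of $\qu$, and then to verify uniqueness by a short computation exploiting the fact that every element of $\mathbb{S}$ is a purely imaginary unit quaternion squaring to $-1$.

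First I would write $\qu = q_0 + q_1\bi + q_2\bj + q_3\bk$ and set $x := q_0 = \text{Re}(\qu)$ together with the imaginary part $\vec{q} := q_1\bi + q_2\bj + q_3\bk$. Since $\qu\in\quat\setminus\mathbb{R}$, at least one of $q_1,q_2,q_3$ is nonzero, so $y := |\vec{q}| = \sqrt{q_1^2+q_2^2+q_3^2}$ is strictly positive; put $I := y^{-1}\vec{q}$. Then $I$ is purely imaginary with $|I| = 1$, hence $I\in\mathbb{S}$, and $\qu = x + yI$ by construction, which gives existence.

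For uniqueness, suppose $\qu = x+yI = x'+y'I'$ with $x,x'\in\mathbb{R}$, $y,y' > 0$, and $I,I'\in\mathbb{S}$. Each such $I$ satisfies $\overline{I} = -I$, so both $yI$ and $y'I'$ have zero real part; comparing real parts therefore forces $x = \text{Re}(\qu) = x'$. Subtracting yields $yI = y'I'$, and taking quaternionic norms (with $|I| = |I'| = 1$) gives $y = y'$; since $y = y' > 0$ we may cancel to conclude $I = I'$. The step that deserves the most care — and is really the only subtle point — is this sign normalization: the requirement $y > 0$ (rather than merely $y\neq 0$) is precisely what rules out the competing representation $\qu = x + (-y)(-I)$ with $-I\in\mathbb{S}$, so without it uniqueness would fail. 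The hypothesis $\qu\notin\mathbb{R}$ enters only to guarantee $\vec{q}\neq 0$, so that $I$ is well defined.
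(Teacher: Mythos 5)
Your proof is correct and complete: the paper states this proposition only as a citation to \cite{Gra1} and gives no argument of its own, and your construction ($x=\mathrm{Re}(\qu)$, $y$ the norm of the imaginary part, $I$ its normalization) together with the uniqueness step via real parts, norms, and the sign convention $y>0$ is exactly the standard argument behind the cited result. Nothing is missing.
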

\begin{definition}(Slice-regular functions \cite{Jo,S,Gra2})\label{D2}
	Let $\Omega$ be a domain in $\quat$. A real differentiable (i.e., with respect to $x_0$ and the $x_i,\; i=1,2,3$) operator-valued function $f:\Omega\longrightarrow \vr$ is said to be slice right regular if, for every quaternion $I\in\mathbb{S}$, the restriction of $f$ to the complex plane $L_I=\mathbb{R}+I\mathbb{R}$ passing through the origin, and containing $1$ and $I$, has continuous partial derivatives (with
	respect to $x$ and $y$, every element in $L_I$ being uniquely expressible as $x + yI$) and satisfies
	\begin{equation}
		\overline{\partial}_I f (x + yI) := \frac 12\left(\frac {\partial f_I (x + yI )}{\partial x}
		+  \frac {\partial f_I (x + yI )}{\partial y}I\right) = 0\; ,
		\label{rightslicereg}
	\end{equation}
	where $f_I=f|_{\Omega\cap L_I}$.
\end{definition}

With this definition all monomials of the form $\phi\qu^n,~\phi\in \vr,~n\in\mathbb{N}$, are slice right regular. Since regularity respects addition, all polynomials of the form $f(\qu)=\sum_{i=0}^{n}\phi_i\qu^i$, with $\phi_i\in \vr$, are slice right regular. Further, an analog of Abel's theorem guarantees convergence of appropriate infinite power series.
\begin{definition}\cite{Jo}
Let $f:\Omega\subseteq\quat\longrightarrow\vr$ and $\qu=x+yI\in\Omega.$ If $\qu$ is not real then  we say that $f$ admits right-slice derivative in a non-real point $\qu$ if 
$$\partial_Sf(\qu)=\lim_{\pu\rightarrow\qu, \pu\in L_I}(f_I(\pu)-f_I(\qu))(\pu-\qu)^{-1}$$
exists and finite for any $I\in\mathbb{S}$.\\
\end{definition}
Under the above definition the slice derivative of a regular function is regular. For $\phi_n\in\vr$ we have
\begin{equation}\label{E11}
	\partial_S\left(\sum_{n=0}^{\infty}\phi_n \qu^n\right)=\sum_{n=0}^{\infty} n \phi_n \qu^{n-1}.
\end{equation}
The following theorem gives the quaternionic version of holomorphy via a Taylor series. Let $B_\quat(0,r)$ be an open ball in $\quat$, of radius $r>0$ and center at $0$.
\begin{theorem}\cite{Jo,Gra2}\label{T5}
	A function $f: B_\quat(0,r)\longrightarrow \vr$ is right regular if and only if it has a series expansion of the form
	$$ f(\qu)=\sum_{n=0}^{\infty}\frac{1}{n!}\frac{\partial^n f}{\partial x^n}(0)\qu^n$$
	converging on $B_\quat(0, r)$.
\end{theorem}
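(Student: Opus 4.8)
The plan is to establish the two implications separately; the substance is in the \emph{only if} direction, obtained by restricting $f$ to complex slices. For the \emph{if} direction, suppose $f(\qu)=\sum_{n\ge 0}a_n\qu^{n}$ with $a_n:=\frac1{n!}\frac{\partial^nf}{\partial x^n}(0)\in\vr$ converges on $B_\quat(0,r)$. Since $\|a_n\qu^{n}\|_{\vr}=\|a_n\|_{\vr}\,|\qu|^{n}$, the quaternionic analogue of Abel's theorem quoted above yields absolute and locally uniform convergence on $B_\quat(0,r)$. For each $I\in\mathbb{S}$ the partial sums $\sum_{n=0}^{N}a_n\qu^{n}$ are $\vr$-valued polynomials, hence slice right regular (monomials $\phi\qu^{n}$ are, and regularity respects finite sums), so their restrictions to the slice $L_I$ are $\vr$-valued holomorphic functions on the disc $B_\quat(0,r)\cap L_I$ and converge locally uniformly there to $f_I$; a locally uniform limit of holomorphic functions is holomorphic, i.e.\ $\overline{\partial}_I f_I=0$. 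As $I\in\mathbb{S}$ was arbitrary, $f$ is right regular.

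For the \emph{only if} direction, fix $I\in\mathbb{S}$ and identify $L_I=\mathbb{R}+I\mathbb{R}$ with $\C$ via $x+yI\leftrightarrow x+iy$; right multiplication by $I$ makes $\vr$ a complex Banach space (legitimate because multiplication in $\quat$ is commutative and associative on $L_I$, the norm satisfies $\|vI\|=\|v\|$, and $\vr$ is complete). Definition \ref{D2}, i.e.\ equation \eqref{rightslicereg}, says exactly that $f_I:B_\quat(0,r)\cap L_I\to\vr$ is holomorphic in this sense on the disc of radius $r$. Repeating the classical Cauchy theory verbatim with Banach-space-valued (Bochner) integrals produces a Taylor expansion $f_I(\qu)=\sum_{n\ge 0}a_n^{(I)}\qu^{n}$ on $B_\quat(0,r)\cap L_I$ with $a_n^{(I)}=\frac1{n!}\,\partial_S^{\,n}f(0)$, where $\partial_S$ is the slice derivative, which on $L_I$ coincides with the complex derivative of $f_I$.

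It remains to see that $a_n^{(I)}$ is independent of $I$ and that the resulting series converges on all of $B_\quat(0,r)$. Since $0$ is real and the real axis lies in every $L_I$, computing the slice-derivative limit along $\mathbb{R}$ gives $\partial_S f(0)=\frac{\partial f}{\partial x}(0)$; as the slice derivative of a regular function is again regular, the same argument at every real point shows $\partial_S f$ and $\frac{\partial f}{\partial x}$ agree on $\mathbb{R}$, and an induction on $n$ gives $\partial_S^{\,n}f(0)=\frac{\partial^n f}{\partial x^n}(0)$ for all $n$; hence $a_n^{(I)}=a_n$ for every $I$. By Proposition \ref{P1} every non-real $\qu\in B_\quat(0,r)$ lies on some $L_I$ (and every real one lies on all of them), so $f(\qu)=\sum_{n\ge0}a_n\qu^{n}$ pointwise on $B_\quat(0,r)$; and since this series converges on a slice disc of radius $r$, for each $\rho<r$ the numbers $\|a_n\|\rho^{n}$ are bounded, whence $\limsup_n\|a_n\|^{1/n}\le 1/r$, which together with $\|a_n\qu^{n}\|=\|a_n\|\,|\qu|^{n}$ gives absolute convergence throughout $B_\quat(0,r)$. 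The main obstacle is the \emph{only if} direction: correctly installing the Banach-space-valued Cauchy/Taylor machinery on a slice with the right-sided quaternionic action on the codomain, and establishing the identity $\partial_S^{\,n}f(0)=\frac{\partial^n f}{\partial x^n}(0)$ that lets the slice-wise expansions glue into one quaternionic power series — after which the passage from slice-wise to global convergence is automatic because the quaternionic norm is multiplicative.
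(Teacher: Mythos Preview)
The paper does not prove this theorem; it is quoted from the references \cite{Jo,Gra2} as a preliminary fact and no argument is given. So there is nothing in the paper to compare your proof against.

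That said, your argument is the standard one and is essentially correct: the \emph{if} direction is immediate from regularity of monomials and preservation under locally uniform limits, and the \emph{only if} direction proceeds by (i) viewing $\vr$ as a complex Banach space over $L_I$ via right multiplication by $I$, so that \eqref{rightslicereg} is literally the Cauchy--Riemann equation and Banach-space-valued Cauchy theory yields a Taylor expansion on each slice disc; (ii) identifying the coefficients across slices by evaluating the iterated slice derivative at the real point $0$ along the real axis, using that $\partial_S f$ is again regular; and (iii) upgrading slice-wise convergence to global convergence via $\|a_n\qu^n\|=\|a_n\|\,|\qu|^n$ and the root test. This is exactly the line taken in \cite{Gra2} (in the scalar-valued case) and in \cite{Jo} (in the operator/vector-valued case), so your proof matches the proofs in the sources the paper cites. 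The only place to be a bit more careful is step (i): you should note that the identification of $\vr$ as a $\C_I$-Banach space uses $\|v\qu\|_{\vr}=\|v\|_{\vr}|\qu|$, which follows from the right-Hilbert-space axioms (ii) and (iv)--(v) in Section~2.2.1, and that Definition~\ref{D2} already assumes continuous partials on each slice, so the passage from $\overline\partial_I f_I=0$ to genuine $\C_I$-differentiability (needed for the Cauchy integral formula) is justified.
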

\begin{remark}
	In general slice-regular functions are not continuous \cite{GSS}. However, under certain assumptions slice continuity can be obtained, see definition 2.7 in \cite{Col}, and even it can be assumed if necessary \cite{Col,Jo}. In this regard, in this manuscript,  we assume continuity for a right regular function wherever needed and still call them simply right regular function.
\end{remark}
\section{Right quaternionic linear  operators and some basic properties}
In this section we shall define right  $\quat$-linear operators and recall some basis properties. Most of them are very well known. In this manuscript, we follow the notations in \cite{AC} and \cite{ghimorper}. We shall also recall some results pertinent to the development of the paper. 
\begin{definition}
A mapping $A:\D(A)\subseteq V_{\quat}^R \longrightarrow U_{\quat}^R$, where $\D(A)$ stands for the domain of $A$, is said to be right $\quat$-linear operator or, for simplicity, right linear operator, if
$$A(\phi\qu+\psi\pu)=(A\phi)\qu+(A\psi)\pu,~~\mbox{~if~}~~\phi,\,\psi\in \D(A)~~\mbox{~and~}~~\qu,\pu\in\quat.$$
\end{definition}
The set of all right linear operators from $V_{\quat}^{R}$ to $U_{\quat}^{R}$ will be denoted by $\mathcal{L}(V_{\quat}^{R},U_{\quat}^{R})$ and the identity linear operator on $V_{\quat}^{R}$ will be denoted by $\Iop$. For a given $A\in \mathcal{L}(V_{\quat}^{R},U_{\quat}^{R})$, the range and the kernel will be
\begin{eqnarray*}
\text{ran}(A)&=&\{\psi \in U_{\quat}^{R}~|~A\phi =\psi \quad\text{for}~~\phi \in\D(A)\}\\
\ker(A)&=&\{\phi \in\D(A)~|~A\phi =0\}.
\end{eqnarray*}
We call an operator $A\in \mathcal{L}(V_{\quat}^{R},U_{\quat}^{R})$ bounded (or continuous) if
\begin{equation}\label{PE1}
\|A\|=\sup_{\|\phi \|_{\vr}=1}\|A\phi \|_{\ur}<\infty,
\end{equation}
or equivalently, there exist $K\geq 0$ such that $\|A\phi \|_{\ur}\leq K\|\phi \|_{\vr}$ for all $\phi \in\D(A)$. The set of all bounded right linear operators from $V_{\quat}^{R}$ to $U_{\quat}^{R}$ will be denoted by $\B(V_{\quat}^{R},U_{\quat}^{R})$. The set of all bounded right linear operators from $V_{\quat}^{R}$ to $V_{\quat}^{R}$ will be denoted by $\B(V_{\quat}^{R})$. Set of all  invertible bounded right linear operators from $V_{\quat}^{R}$ to $U_{\quat}^{R}$ will be denoted by $\mathcal{G} (V_{\quat}^{R},U_{\quat}^{R})$. We also denote for a set $\Delta\subseteq\quat$, $\Delta^*=\{\oqu~|~\qu\in\Delta\}$.
\\
Assume that $V_{\quat}^{R}$ is a right quaternionic Hilbert space, $A$ is a right linear operator acting on it.
Then, there exists a unique linear operator $A^{\dagger}$ such that
\begin{equation}\label{Ad1}
\langle \psi \mid A\phi \rangle_{\ur}=\langle A^{\dagger} \psi \mid\phi \rangle_{\vr};\quad\text{for all}~~~\phi \in \D (A), \psi\in\D(A^\dagger),
\end{equation}
where the domain $\D(A^\dagger)$ of $A^\dagger$ is defined by
$$
\D(A^\dagger)=\{\psi\in U_{\quat}^{R}\ |\ \exists \varphi\ {\rm such\ that\ } \langle \psi \mid A\phi \rangle_{\ur}=\langle \varphi \mid\phi \rangle_{\vr}\}.$$
The following theorem gives two important and fundamental results about right $\quat$-linear bounded operators which are already appeared in \cite{ghimorper} for the case of $\vr=\ur$. Point (b) of the following theorem is known as the open mapping theorem.
\begin{theorem}\cite{Fr}\label{open} Let $A:\D(A)\subseteq V_{\quat}^R \longrightarrow U_{\quat}^R$ be a right $\quat$-linear operator. Then
\begin{itemize} 
\item[(a)] $A\in\B(\vr,\ur)$ if and only if $A$ is continuous.
\item[(b)] if $A\in\B(\vr,\ur)$ is surjective, then $A$ is open. In particular, if $A$ is bijective then $A^{-1}\in\B(\vr,\ur)$.
\end{itemize}
\end{theorem}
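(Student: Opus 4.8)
The plan is to run the classical Banach-space arguments, the only genuinely new work being the right-quaternionic bookkeeping for scalar multiplication and norms. Throughout, recall that $\D(A)$ is a right $\quat$-subspace, so that $A\phi-A\psi=A(\phi-\psi)$, and that by axioms (ii)--(v) of the inner product $\|\phi\,r\|=r\,\|\phi\|$ for every real $r>0$, while $A(\phi\,r)=(A\phi)\,r$.

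For (a): if $A\in\B(\vr,\ur)$, then $\|A\phi-A\psi\|_{\ur}=\|A(\phi-\psi)\|_{\ur}\le\|A\|\,\|\phi-\psi\|_{\vr}$, so $A$ is Lipschitz, hence continuous. Conversely, if $A$ is continuous then it is continuous at $0$, so there is $\delta>0$ with $\|A\phi\|_{\ur}\le 1$ whenever $\|\phi\|_{\vr}\le\delta$; applying this to $\phi\,(\delta\|\phi\|_{\vr}^{-1})$ for $\phi\neq 0$ yields $\|A\phi\|_{\ur}\le\delta^{-1}\|\phi\|_{\vr}$, so $A$ is bounded.

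For (b): assume $A\in\B(\vr,\ur)$ is surjective. First I would note $\ur=\bigcup_{n\in\N}A\big(B_{\vr}(0,n)\big)$ and invoke the Baire category theorem (valid since $\ur$, being a complete metric space, is non-meager) to obtain $n_0$ with $\overline{A(B_{\vr}(0,n_0))}$ having nonempty interior. Using $A(B_{\vr}(0,r))=A(B_{\vr}(0,1))\,r$ for $r>0$ and the standard difference-of-balls argument (legitimate because the norm balls of $\vr$ are convex and symmetric and the vector operations of $\ur$ are continuous), this upgrades to: there is $\delta>0$ with $B_{\ur}(0,\delta)\subseteq\overline{A(B_{\vr}(0,1))}$. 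Then comes the key step: given $\psi\in B_{\ur}(0,\delta)$, construct inductively $\phi_k\in B_{\vr}(0,2^{-k})$ with $\|\psi-A(\phi_0+\cdots+\phi_k)\|_{\ur}<2^{-(k+1)}\delta$ (each choice possible by rescaling the density just obtained); the series $\sum_k\phi_k$ converges absolutely, hence converges in the complete space $\vr$ to some $\phi$ with $\|\phi\|_{\vr}<2$, and continuity of $A$ (part (a)) forces $A\phi=\psi$. Thus $B_{\ur}(0,\delta/2)\subseteq A(B_{\vr}(0,1))$. Openness is then immediate: for open $U\subseteq\vr$ and $\phi_0\in U$ pick $r>0$ with $B_{\vr}(\phi_0,r)\subseteq U$; then $A(U)\supseteq A\phi_0+A(B_{\vr}(0,r))\supseteq A\phi_0+B_{\ur}(0,r\delta/2)$, a neighbourhood of $A\phi_0$. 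Finally, if $A$ is bijective, $A^{-1}$ is again right $\quat$-linear (from $A^{-1}(\alpha\qu+\beta\pu)=(A^{-1}\alpha)\qu+(A^{-1}\beta)\pu$, seen by applying $A$), and openness of $A$ says precisely that $A^{-1}$ is continuous, whence $A^{-1}\in\B(\ur,\vr)$ by part (a).

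The main obstacle is the completeness/iteration step in (b), together with the need to double-check the ``soft'' ingredients that the complex proof takes for granted: that $\vr$ and $\ur$ are Baire spaces, that right multiplication by positive reals rescales norms and is intertwined by $A$, that the $\vr$-balls are convex and symmetric, and that $\sum_k\phi_k$ converges in $\vr$ --- all of which follow from completeness and the inner-product axioms (i)--(v). Notably, no left scalar action and no adjoint enters here, so the non-commutativity difficulties emphasised in the introduction do not intervene in this particular statement.
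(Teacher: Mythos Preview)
Your proof is correct and follows the standard open-mapping-theorem argument, with the quaternionic bookkeeping handled cleanly. Note, however, that the paper does not actually supply a proof of this theorem: it is quoted from \cite{Fr} (and in part from \cite{ghimorper}) and stated without argument, so there is no in-text proof to compare against. Your write-up therefore goes well beyond what the paper itself does here, and your closing observation that no left scalar action or adjoint is needed is exactly the point---this result is purely a Banach-space/metric fact, insensitive to the non-commutativity issues the paper worries about elsewhere. One cosmetic remark: the paper's statement has $A^{-1}\in\B(\vr,\ur)$, which is a typo for $\B(\ur,\vr)$; your version silently fixes this.
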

The following proposition provides some useful aspects about the orthogonal complement subsets.
\begin{proposition}\cite{Fr}\label{ort}
Let $M\subseteq\vr$. Then
\begin{itemize}
\item [(a)] $M^{^\perp}$ is closed.
\item [(b)] if $M$ is a closed subspace of $\vr$ then $\vr=M\oplus M^\perp$.
\item [(c)] if $\dim(M)<\infty$, then $M$ is a closed subspace.
\end{itemize}
\end{proposition}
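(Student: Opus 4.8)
The plan is to prove the three items in order, with (b)---a quaternionic projection theorem---carrying all the real content, while (a) and (c) are soft. For (a), I would first check that $M^{\perp}$ is a right $\quat$-subspace: if $\psi_{1},\psi_{2}\in M^{\perp}$ and $\qu_{1},\qu_{2}\in\quat$, then axioms (iii)--(iv) of the inner product give $\langle\phi\mid\psi_{1}\qu_{1}+\psi_{2}\qu_{2}\rangle=\langle\phi\mid\psi_{1}\rangle\qu_{1}+\langle\phi\mid\psi_{2}\rangle\qu_{2}=0$ for every $\phi\in M$. Closedness is then immediate, since for each fixed $\phi$ the map $\psi\mapsto\langle\phi\mid\psi\rangle$ is continuous (Cauchy--Schwarz holds in $\vr$ exactly as in the complex case), so $\{\psi\in\vr:\langle\phi\mid\psi\rangle=0\}$ is closed and $M^{\perp}=\bigcap_{\phi\in M}\{\psi:\langle\phi\mid\psi\rangle=0\}$ is an intersection of closed sets. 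For (c), I would use that a finite-dimensional $\quat$-subspace $M$, regarded as a real normed space, is linearly homeomorphic to $\mathbb{R}^{4n}$ (all norms on a finite-dimensional real space being equivalent), hence complete, and a complete subset of the metric space $\vr$ is closed; alternatively, run quaternionic Gram--Schmidt to obtain an orthonormal basis $e_{1},\dots,e_{n}$ of $M$, note that $P\psi=\sum_{k=1}^{n}e_{k}\langle e_{k}\mid\psi\rangle$ is a bounded idempotent with range $M$, and conclude $M=\kr(\Iop-P)$ is closed.

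For (b), fix $\phi\in\vr$, put $d=\inf_{m\in M}\|\phi-m\|$, and choose $m_{k}\in M$ with $\|\phi-m_{k}\|\to d$. The key tool is the parallelogram identity $\|u+v\|^{2}+\|u-v\|^{2}=2\|u\|^{2}+2\|v\|^{2}$, valid because $\|u\pm v\|^{2}=\|u\|^{2}\pm 2\,\mathrm{Re}\langle u\mid v\rangle+\|v\|^{2}$ has a real cross term. Applying it to $u=\phi-m_{j}$, $v=\phi-m_{k}$ and using $\tfrac12(m_{j}+m_{k})\in M$ yields $\|m_{j}-m_{k}\|^{2}\le 2\|\phi-m_{j}\|^{2}+2\|\phi-m_{k}\|^{2}-4d^{2}\to 0$, so $(m_{k})$ is Cauchy and, since $M$ is closed, converges to some $m\in M$ with $\|\phi-m\|=d$. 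To see $\phi-m\in M^{\perp}$, take any $n\in M$ with $\|n\|=1$ and set $\qu:=\langle n\mid\phi-m\rangle$; expanding $\|\phi-m-n\qu\|^{2}$ with axioms (iv)--(v) collapses the cross terms and gives $\|\phi-m\|^{2}-|\langle n\mid\phi-m\rangle|^{2}\ge d^{2}=\|\phi-m\|^{2}$, forcing $\langle n\mid\phi-m\rangle=0$; scaling removes the normalization, so $\langle n\mid\phi-m\rangle=0$ for all $n\in M$, that is, $\phi-m\in M^{\perp}$. Hence $\phi=m+(\phi-m)\in M+M^{\perp}$, and the sum is direct since $\psi\in M\cap M^{\perp}$ implies $\|\psi\|^{2}=\langle\psi\mid\psi\rangle=0$, so $\psi=0$.

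The only point where quaternionic non-commutativity genuinely matters is the orthogonality step in (b): the scalar $\qu$ must be placed on the correct (right) side of $n$ so that axiom (v), $\langle\psi\qu\mid\cdot\rangle=\overline{\qu}\langle\psi\mid\cdot\rangle$, together with axiom (iv), $\langle\cdot\mid\psi\qu\rangle=\langle\cdot\mid\psi\rangle\qu$, makes the two cross terms of the expansion reduce to $|\langle n\mid\phi-m\rangle|^{2}$. Once the one-sidedness is handled with care, the argument is a line-by-line transcription of the classical complex proof, which is why the statement is quoted from \cite{Fr}; I do not anticipate any obstacle beyond this bookkeeping.
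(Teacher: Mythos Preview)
Your proof is correct in all three parts; the parallelogram-law projection argument in (b) is carried out with the right attention to the sidedness of the inner product, and the computation $\|\phi-m-n\qu\|^{2}=\|\phi-m\|^{2}-|\qu|^{2}$ with $\qu=\langle n\mid\phi-m\rangle$ checks out exactly as you claim. There is nothing to compare against: the paper does not prove this proposition at all but merely imports it from \cite{Fr}, a fact you already noted, so your write-up supplies strictly more than the paper does here.
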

\begin{proposition} \cite{ghimorper, Fr} \label{IP30}
Let $A\in\B(\vr, \ur)$. Then
\begin{itemize}
\item [(a)] $\ra(A)^\perp=\kr(A^\dagger).$
\item [(b)] $\kr(A)=\ra(A^\dagger)^\perp.$
\item [(c)] $\kr(A)$ is closed subspace of $\vr$.
\end{itemize}
\end{proposition}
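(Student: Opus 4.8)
The plan is to deduce all three parts from the defining relation \eqref{Ad1} of the adjoint, the non-degeneracy of the inner product (property (ii) in Section~2.2), and Proposition~\ref{ort}. Since $A\in\B(\vr,\ur)$, its adjoint $A^\dagger$ is everywhere defined on $\ur$ and bounded, so no domain subtleties arise in any of the computations below.

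For (a), I would begin from the characterization that $\psi\in\ra(A)^\perp$ if and only if $\langle A\phi\mid\psi\rangle_{\ur}=0$ for every $\phi\in\vr$. Using property (i) of the inner product together with \eqref{Ad1}, one rewrites
\[
\langle A\phi\mid\psi\rangle_{\ur}=\overline{\langle\psi\mid A\phi\rangle_{\ur}}=\overline{\langle A^\dagger\psi\mid\phi\rangle_{\vr}}=\langle\phi\mid A^\dagger\psi\rangle_{\vr}.
\]
Hence $\psi\in\ra(A)^\perp$ if and only if $\langle\phi\mid A^\dagger\psi\rangle_{\vr}=0$ for all $\phi\in\vr$; taking $\phi=A^\dagger\psi$ and invoking property (ii) forces $A^\dagger\psi=0$, that is, $\psi\in\kr(A^\dagger)$, and the reverse inclusion follows by reading the computation backwards. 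It is worth noting along the way that $\ra(A)^\perp=\overline{\ra(A)}^{\,\perp}$, so the possible non-closedness of $\ra(A)$ is harmless.

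For (b), I would either observe that $(A^\dagger)^\dagger=A$ — which is immediate from \eqref{Ad1} and property (i), using boundedness of $A$ — and apply part (a) to $A^\dagger$ in place of $A$, or else repeat the argument directly: $\phi\in\kr(A)$ iff $A\phi=0$ iff $\langle A\phi\mid\psi\rangle_{\ur}=0$ for every $\psi\in\ur$ (non-degeneracy), iff $\langle\phi\mid A^\dagger\psi\rangle_{\vr}=0$ for every $\psi\in\ur$, iff $\phi\perp\ra(A^\dagger)$. For (c), the subspace (right submodule) property of $\kr(A)$ is immediate from right linearity of $A$, while closedness follows either from part (b) combined with Proposition~\ref{ort}(a), or directly from continuity of $A$ (Theorem~\ref{open}(a)), since $\kr(A)=A^{-1}(\{0\})$ is the preimage of the closed set $\{0\}$.

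I do not expect a genuine obstacle here: the statement is the quaternionic analogue of a standard Hilbert-space fact. The only point requiring some care is the bookkeeping with quaternionic conjugation and the left/right placement of scalars, which is why I pass through property (i) explicitly at each step rather than treating $\langle\cdot\mid\cdot\rangle$ as symmetric the way one would over $\mathbb{R}$ or $\mathbb{C}$.
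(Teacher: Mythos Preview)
Your proof is correct and is the standard argument. The paper does not supply its own proof of this proposition; it is stated with citations to \cite{ghimorper, Fr}, so there is no in-paper argument to compare against beyond noting that your approach is the expected one.
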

\begin{proposition}\label{dag}\cite{ghimorper} $A\in\B(\vr)$, then $A^\dagger\in\B(\vr), \|A\|=\|A^\dagger\|$ and $\|A^\dagger A\|=\|A\|^2$.
\end{proposition}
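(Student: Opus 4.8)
The plan is to first upgrade the abstractly-defined adjoint to an everywhere-defined bounded operator, and then carry out the standard $C^*$-style computations, paying attention to the side on which quaternionic scalars act.

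First I would show $\D(A^\dagger)=\vr$. Fix $\psi\in\vr$. By property (iv) of the inner product the map $\phi\mapsto\langle\psi\mid A\phi\rangle_{\vr}$ is a right $\quat$-linear functional on $\vr$, and by the Cauchy--Schwarz inequality together with boundedness of $A$ it satisfies $|\langle\psi\mid A\phi\rangle_{\vr}|\le\|A\|\,\|\psi\|\,\|\phi\|$, so it is bounded. The quaternionic Riesz representation theorem (which follows from the orthogonal decomposition in Proposition \ref{ort}(b): if the functional is nonzero, its kernel is a closed proper subspace and a representing vector is produced from the one-dimensional orthogonal complement) yields a unique vector, call it $A^\dagger\psi$, with $\langle\psi\mid A\phi\rangle_{\vr}=\langle A^\dagger\psi\mid\phi\rangle_{\vr}$ for all $\phi$; this is exactly the defining relation \eqref{Ad1}, so $\psi\in\D(A^\dagger)$ and hence $\D(A^\dagger)=\vr$. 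Right $\quat$-linearity of $A^\dagger$ is immediate from uniqueness of the representing vector.

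Next I would bound $A^\dagger$. For $\psi\in\vr$, taking $\phi=A^\dagger\psi$ in \eqref{Ad1} gives
$$\|A^\dagger\psi\|^2=\langle A^\dagger\psi\mid A^\dagger\psi\rangle_{\vr}=\langle\psi\mid A A^\dagger\psi\rangle_{\vr}\le\|\psi\|\,\|A\|\,\|A^\dagger\psi\|,$$
so $\|A^\dagger\psi\|\le\|A\|\,\|\psi\|$ and hence $A^\dagger\in\B(\vr)$ with $\|A^\dagger\|\le\|A\|$. Conjugating \eqref{Ad1} and using property (i), $\langle\phi\mid A^\dagger\psi\rangle_{\vr}=\overline{\langle A^\dagger\psi\mid\phi\rangle_{\vr}}=\overline{\langle\psi\mid A\phi\rangle_{\vr}}=\langle A\phi\mid\psi\rangle_{\vr}$ for all $\phi,\psi$, which identifies $(A^\dagger)^\dagger=A$. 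Applying the inequality just proved to $A^\dagger$ in place of $A$ gives $\|A\|=\|(A^\dagger)^\dagger\|\le\|A^\dagger\|$, whence $\|A\|=\|A^\dagger\|$.

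Finally, for the $C^*$-identity: submultiplicativity of the operator norm together with $\|A^\dagger\|=\|A\|$ gives $\|A^\dagger A\|\le\|A\|^2$. For the reverse inequality, for any $\phi\in\vr$ with $\|\phi\|=1$,
$$\|A\phi\|^2=\langle A\phi\mid A\phi\rangle_{\vr}=\langle A^\dagger A\phi\mid\phi\rangle_{\vr}\le\|A^\dagger A\phi\|\le\|A^\dagger A\|,$$
and taking the supremum over such $\phi$ yields $\|A\|^2\le\|A^\dagger A\|$. Hence $\|A^\dagger A\|=\|A\|^2$. The only point requiring genuine input beyond formal manipulation is the everywhere-definedness of $A^\dagger$, i.e.\ the quaternionic Riesz representation theorem; the rest is routine once one is careful that in \eqref{Ad1} the scalar acts on the correct side (property (iv)) and that the inner products appearing in the displayed estimates are in fact real and nonnegative, being squared norms.
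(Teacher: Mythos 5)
Your proof is correct, but it is not comparable to anything in the paper itself: Proposition \ref{dag} is stated there with no argument at all, being quoted directly from \cite{ghimorper}, so what you have supplied is a self-contained proof where the authors rely on the literature. Your route is the standard $C^*$-style one, adapted correctly to the right-quaternionic conventions: everywhere-definedness of $A^\dagger$ via the quaternionic Riesz representation theorem, the bound $\|A^\dagger\psi\|\le\|A\|\,\|\psi\|$ by putting $\phi=A^\dagger\psi$ in \eqref{Ad1}, the involution $(A^\dagger)^\dagger=A$ from property (i) to get the reverse norm inequality, and the two halves of $\|A^\dagger A\|=\|A\|^2$ from submultiplicativity and Cauchy--Schwarz. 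Each scalar manipulation you use is legitimate under properties (iv) and (v), and the quantities you estimate are indeed real squared norms, as you note. The only step that deserves one more line is your parenthetical sketch of the Riesz theorem: in the noncommutative setting the usual decomposition must be written as $\phi=\bigl(\phi-\omega f(\omega)^{-1}f(\phi)\bigr)+\omega f(\omega)^{-1}f(\phi)$ with the scalar on the right, so that right-linearity gives $f\bigl(\omega f(\omega)^{-1}f(\phi)\bigr)=f(\omega)f(\omega)^{-1}f(\phi)=f(\phi)$, and the representing vector is $\omega\overline{f(\omega)}$; the naive complex-variable manipulation with scalars on the left would break because $f(\omega)$ and $f(\phi)$ need not commute. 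With that caveat (or simply citing the quaternionic Riesz theorem, which is available in \cite{ghimorper}), your argument is complete and has the advantage of making the paper self-contained on this point.
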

\begin{definition}\cite{Ai}\label{BBD}
An operator $A\in\B(\vr)$ is said to be bounded below if $A$ is injective and has closed range.
\end{definition}
\begin{proposition}\label{BBP}
$A\in\B(\vr)$ is bounded below if and only if there exists $K>0$ such that $\|A\phi\|\geq K\|\phi\|$ for all $\phi\in\vr$.
\end{proposition}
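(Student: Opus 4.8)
The plan is to establish the two implications separately: the ``if'' direction by a direct argument with Cauchy sequences, and the ``only if'' direction via the open mapping theorem already recorded as Theorem~\ref{open}(b).

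First I would treat sufficiency. Assuming $\|A\phi\|\geq K\|\phi\|$ for all $\phi\in\vr$ with some fixed $K>0$, injectivity is immediate, since $A\phi=0$ forces $\|\phi\|\leq K^{-1}\|A\phi\|=0$. For closedness of $\ra(A)$, I would pick a sequence $A\phi_n\to\psi$ in $\vr$ and observe that the inequality makes $(\phi_n)$ Cauchy, because $\|\phi_n-\phi_m\|\leq K^{-1}\|A\phi_n-A\phi_m\|$; completeness of $\vr$ yields a limit $\phi$, and continuity of $A$ (Theorem~\ref{open}(a)) gives $A\phi=\psi\in\ra(A)$. Hence $A$ is injective with closed range, i.e. bounded below in the sense of Definition~\ref{BBD}.

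For necessity, assume $A$ is bounded below; the case $\vr=\{0\}$ is vacuous, so suppose $\vr\neq\{0\}$. Then $\ra(A)$ is a closed subspace of $\vr$, hence itself a right quaternionic Hilbert space, and $A\colon\vr\to\ra(A)$ is a bounded right-linear bijection. Theorem~\ref{open}(b) then provides a bounded inverse $A^{-1}\colon\ra(A)\to\vr$, with $\|A^{-1}\|>0$. Taking $K=\|A^{-1}\|^{-1}$ and writing $\|\phi\|=\|A^{-1}(A\phi)\|\leq\|A^{-1}\|\,\|A\phi\|$ for every $\phi\in\vr$ yields $\|A\phi\|\geq K\|\phi\|$, as desired.

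The only point requiring a little care is purely formal: that the open mapping theorem in Theorem~\ref{open}(b) applies to $A$ regarded as a map onto the closed subspace $\ra(A)$, which is legitimate because a closed subspace of a quaternionic Hilbert space is complete, hence again a quaternionic Hilbert space. I do not anticipate any genuine obstacle here; for this statement the quaternionic setting behaves exactly as the complex one.
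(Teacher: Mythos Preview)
Your proof is correct and is exactly the standard argument the paper is deferring to: the paper's own proof consists solely of the sentence ``A proof follows exactly as a complex proof. For a complex proof see \cite{Ai}, page 15,'' and the argument you have written is that complex proof, carried over verbatim to the quaternionic setting via Theorem~\ref{open}. There is nothing to add.
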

\begin{proof}
	A proof follows exactly as a complex proof. For a complex proof see \cite{Ai}, page 15.	
\end{proof}
\begin{proposition}\label{BBP2}
	Let $A\in\B(\vr)$. Then $A^2$ is bounded below if and only if $A$ is bounded below (hence $A^n$ is bounded below for any $n\in\N$ if and only if $A$ is bounded below).
\end{proposition}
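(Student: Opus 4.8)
The plan is to reduce everything to the metric characterization of boundedness below given by Proposition~\ref{BBP}: an operator $B\in\B(\vr)$ is bounded below precisely when there is $K>0$ with $\|B\phi\|\geq K\|\phi\|$ for all $\phi\in\vr$. Since, for $A^2$ (or $A^n$) being bounded below then amounts to a two-sided comparison of norms, the statement collapses to a short chain of inequalities together with a trivial induction on $n$.

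First I would do the forward implication. Assume $A$ is bounded below, so $\|A\phi\|\geq K\|\phi\|$ for all $\phi$. Replacing $\phi$ by $A\phi$ gives $\|A^2\phi\|=\|A(A\phi)\|\geq K\|A\phi\|\geq K^2\|\phi\|$, and iterating, $\|A^n\phi\|\geq K^n\|\phi\|$ for every $n$; in particular $A^2$ is bounded below. This step is entirely routine.

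For the converse, suppose $A^2$ is bounded below, say $\|A^2\phi\|\geq K\|\phi\|$ for all $\phi$, with $K>0$. If $A=0$ then $A^2=0$ is bounded below only when $\vr=\{0\}$, in which case the statement is vacuous; otherwise $\|A\|>0$ and, using the definition of the operator norm applied to the vector $A\phi$,
\[
K\|\phi\|\leq\|A^2\phi\|=\|A(A\phi)\|\leq\|A\|\,\|A\phi\|,
\]
whence $\|A\phi\|\geq(K/\|A\|)\|\phi\|$ and $A$ is bounded below by Proposition~\ref{BBP}. More generally, if $A^n$ is bounded below with constant $K$, then iterating $\|A^n\phi\|=\|A(A^{n-1}\phi)\|\leq\|A\|^{\,n-1}\|A\phi\|$ yields $\|A\phi\|\geq(K/\|A\|^{\,n-1})\|\phi\|$, so $A$ is bounded below. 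Combining the two implications gives the equivalence for every $n\in\N$.

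The only point that needs attention is the degenerate case $\|A\|=0$, which I would dispose of at the outset; beyond that there is no real obstacle, and the argument is identical to its complex counterpart, since it uses only the real-normed structure of $\vr$ and the defining inequality of the operator norm.
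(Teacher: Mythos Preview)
Your proof is correct, and it takes a genuinely different route from the paper. The paper works directly with Definition~\ref{BBD} (injective plus closed range): for the forward direction it argues that $\ker(A^2)=\{0\}$ and that $A^2(\vr)=A(A(\vr))$ is closed; for the converse it shows injectivity of $A$ from that of $A^2$, and then proves $A(\vr)$ is closed by a sequential argument, pushing a convergent sequence in $A(\vr)$ through $A$ into the closed set $A^2(\vr)$ and using injectivity to pull back.

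You instead invoke the metric characterization of Proposition~\ref{BBP} and reduce both directions to one-line norm inequalities. This is cleaner and more robust: the paper's forward direction leans on the phrase ``the image of a closed set under a continuous map is closed,'' which is not true in general and really requires the bounded-below hypothesis to justify (via the bounded inverse on the range). Your argument sidesteps that subtlety entirely. The paper's approach, on the other hand, makes the topological content (injectivity and closed range) explicit, which matches how semi-regularity is used later in the manuscript. Either way, the result is elementary and your version would serve just as well.
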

\begin{proof}
	Suppose $A$ is bounded below. Then $\kr(A)=\{0\}$ and $A(\vr)$ is closed. Since $\kr(A^2)\subseteq\kr(A)$ and the image of a closed set under continuous map is closed, $A^2$ is bounded below.
	Conversely, suppose $A^2$ is bounded below. Then $\kr(A^2)=\{0\}$ and $A^2(\vr)$ is closed. Let $\phi\in\vr$ and $A(\phi)=0$, then $A^2(\phi)=A(0)=0$ thus $\phi=0$, and hence $A$ is injective. Let $\{\phi_n\}\subseteq A(\vr)$ such that $\phi_n\longrightarrow\phi$ as $n\rightarrow\infty$, then  $A(\phi_n)\longrightarrow A(\phi)$ as $n\rightarrow\infty$. Therefore $A(\phi)\in A^2(\vr)$ and hence $\phi\in A(\vr)$. Therefore, $A(\vr)$ is closed.
\end{proof}

\begin{theorem}\cite{Fr}(Bounded inverse theorem)\label{NT1}
Let $A\in\B(\vr,\ur)$, then the following results are equivalent.
\begin{enumerate}
\item [(a)] $A$ has a bounded inverse on its range.
\item[(b)] $A$ is bounded below.
\item[(c)] $A$ is injective and has a closed range.
\end{enumerate}
\end{theorem}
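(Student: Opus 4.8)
The plan is to prove the three conditions equivalent by establishing $(a)\Leftrightarrow(c)$ explicitly, and then noting that $(b)\Leftrightarrow(c)$ is immediate from Definition \ref{BBD}, where ``bounded below'' is defined to mean precisely ``injective with closed range'' (equivalently, one may route through the metric characterization of Proposition \ref{BBP}). So the real work is a single pair of implications between $(a)$ and $(c)$.

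First I would prove $(c)\Rightarrow(a)$. Assuming $A$ is injective with closed range, I would observe that $\ra(A)$, being a closed subspace of $\ur$, is itself a separable right quaternionic Hilbert space under the inherited inner product. Then $A$, viewed as an operator from $\vr$ onto $\ra(A)$, is a bounded, right linear, bijective map, so Theorem \ref{open}(b) yields that its inverse $\ra(A)\to\vr$ is bounded; that is exactly statement $(a)$.

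Next I would prove $(a)\Rightarrow(c)$. Let $B\colon\ra(A)\to\vr$ be the assumed bounded inverse, so that $BA=\Iop$ on $\vr$. Injectivity of $A$ is then immediate, since $A\phi=0$ forces $\phi=BA\phi=0$. For closedness of $\ra(A)$ I would take a sequence $\psi_n=A\phi_n$ with $\psi_n\to\psi$ in $\ur$, set $\phi_n=B\psi_n$, note that $(\phi_n)$ is Cauchy because $B$ is bounded and $(\psi_n)$ is Cauchy, pass to its limit $\phi\in\vr$ using completeness, and conclude $\psi=\lim_n A\phi_n=A\phi\in\ra(A)$ by continuity of $A$ (Theorem \ref{open}(a)). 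This gives $(c)$, and chaining with $(b)\Leftrightarrow(c)$ finishes the equivalence.

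There is no genuine obstacle: everything reduces to the quaternionic open mapping theorem, which is already available as Theorem \ref{open}, together with the elementary Cauchy-sequence argument using completeness of $\vr$. The one point worth stating carefully is that a closed range is automatically a complete (hence right quaternionic Hilbert) space, so that Theorem \ref{open}(b) genuinely applies to the corestriction of $A$; once that observation is in place, the proof is routine.
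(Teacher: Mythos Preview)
Your proof is correct and entirely standard. Note, however, that the paper does not actually prove this theorem: it is stated with a citation to \cite{Fr} and no argument is given in the present text, so there is no ``paper's own proof'' to compare against. Your route---$(b)\Leftrightarrow(c)$ directly from Definition~\ref{BBD}, $(c)\Rightarrow(a)$ via the open mapping theorem (Theorem~\ref{open}(b)) applied to the corestriction onto the closed range, and $(a)\Rightarrow(c)$ via the elementary Cauchy-sequence argument---is exactly the expected one in both the complex and quaternionic settings, and all the ingredients you invoke (Theorem~\ref{open}, completeness of closed subspaces, separability inherited by closed subspaces) are available here.
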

\begin{proposition}\cite{Fr}\label{NP1}
Let $A\in\B(\vr,\ur)$, then $\ra(A)$ is closed in $\ur$ if and only if $\ra(A^\dagger)$ is closed in $\vr$.
\end{proposition}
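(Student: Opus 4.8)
The plan is to prove the single implication ``$\ra(A)$ closed $\then$ $\ra(A^\dagger)$ closed'' and then to deduce the converse by applying it to $A^\dagger$ in place of $A$, using $(A^\dagger)^\dagger=A$, which follows from (\ref{Ad1}) upon conjugating both sides of that identity. So I would assume $\ra(A)$ is closed.

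First I would record the two orthogonal splittings that are needed. Since $\kr(A)$ is closed by Proposition \ref{IP30}(c), Proposition \ref{ort}(b) gives $\vr=\kr(A)\oplus\kr(A)^\perp$, and the restriction $A_0:=A|_{\kr(A)^\perp}$ is injective, its kernel being $\kr(A)\cap\kr(A)^\perp=\{0\}$, and satisfies $\ra(A_0)=\ra(A)$. As $\ra(A)$ is closed, $A_0$ is injective with closed range, so by Theorem \ref{NT1} (applied to $A_0$, with $\kr(A)^\perp$ in place of $\vr$) it has a bounded inverse on its range; equivalently, there is $m>0$ with $\|A\phi\|\ge m\|\phi\|$ for every $\phi\in\kr(A)^\perp$. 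On the other side, since $\ra(A)$ is a closed subspace of $\ur$, Proposition \ref{ort}(b) and Proposition \ref{IP30}(a) give $\ur=\ra(A)\oplus\kr(A^\dagger)$; as $A^\dagger$ annihilates $\kr(A^\dagger)$, splitting a general $\psi\in\ur$ along this sum yields $\ra(A^\dagger)=A^\dagger\big(\ra(A)\big)$.

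Next I would show that the restriction $D:=A^\dagger|_{\ra(A)}\colon\ra(A)\to\vr$ is bounded below. It is injective because $\kr(D)=\kr(A^\dagger)\cap\ra(A)=\ra(A)^\perp\cap\ra(A)=\{0\}$ by Proposition \ref{IP30}(a). For the lower bound, given $\psi\in\ra(A)$ I choose $\phi\in\kr(A)^\perp$ with $A\phi=\psi$ (possible since $\ra(A_0)=\ra(A)$), and use the adjoint identity (\ref{Ad1}) together with the Cauchy--Schwarz inequality:
$$\|\psi\|^2=\langle\psi\mid A\phi\rangle_{\ur}=\langle A^\dagger\psi\mid\phi\rangle_{\vr}\le\|A^\dagger\psi\|\,\|\phi\|\le\frac{1}{m}\,\|A^\dagger\psi\|\,\|\psi\|,$$
so $\|D\psi\|=\|A^\dagger\psi\|\ge m\|\psi\|$. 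By Proposition \ref{BBP} (applied with $\ra(A)$ in place of $\vr$) this forces $D$ to be injective with closed range, hence $\ra(A^\dagger)=\ra(D)=A^\dagger(\ra(A))$ is closed. This establishes the implication, and the converse follows by the symmetry noted above.

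The argument follows the complex template closely; the one substantive step is the lower bound for $D$ on $\ra(A)$. The points requiring care in the quaternionic setting are that (\ref{Ad1}) places the adjoint and the scalar actions on the sides that make the Cauchy--Schwarz estimate legitimate, and that a closed subspace such as $\ra(A)$ or $\kr(A)^\perp$ is again a separable right quaternionic Hilbert space, so that Theorem \ref{NT1} and Proposition \ref{BBP} may be invoked for $A_0$ and $D$; I do not anticipate any deeper obstacle.
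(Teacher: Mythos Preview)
Your argument is correct. Note, however, that the paper does not supply its own proof of this proposition: it is stated with a citation to \cite{Fr} and no proof is given in the text, so there is nothing in the paper to compare against beyond observing that the result is quoted as known.

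On the substance of your proof: the strategy is the standard Hilbert-space version of the closed range theorem, and each step goes through in the right quaternionic setting exactly as you indicate. The only places worth a second glance are the ones you already flag. First, the identity $(A^\dagger)^\dagger=A$ for bounded $A$ does follow by conjugating (\ref{Ad1}) and using property (i) of the inner product. Second, in the chain
\[
\|\psi\|^2=\langle\psi\mid A\phi\rangle_{\ur}=\langle A^\dagger\psi\mid\phi\rangle_{\vr}
\]
the middle quantity is the real nonnegative number $\|\psi\|^2$, so the quaternionic Cauchy--Schwarz inequality $|\langle u\mid v\rangle|\le\|u\|\,\|v\|$ applies without any sign or noncommutativity issue. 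Third, $\ra(A)$ and $\kr(A)^\perp$ are closed right-linear subspaces, hence themselves right quaternionic Hilbert spaces, so invoking Theorem \ref{NT1} and Proposition \ref{BBP} for the restrictions $A_0$ and $D$ is legitimate. The conclusion $\ra(A^\dagger)=A^\dagger(\ra(A))=\ra(D)$ closed then follows, and the converse by symmetry.
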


\begin{proposition}\label{ST}
	Let $A,B\in\B(\vr)$. Assume that $AB=BA$. Then $AB$ is invertible if and only if both $A$ and $B$ are invertible.
\end{proposition}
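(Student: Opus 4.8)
The plan is to prove the two implications separately and, in both, to work entirely inside the associative unital composition algebra of bounded right $\quat$-linear operators, so that the non-commutativity of $\quat$ never intervenes.

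The forward implication is immediate: if $A$ and $B$ are invertible, then by Theorem~\ref{open}(b) their inverses lie in $\B(\vr)$, hence $B^{-1}A^{-1}\in\B(\vr)$, and $(AB)(B^{-1}A^{-1})=\Iop=(B^{-1}A^{-1})(AB)$; note that commutativity is not needed for this half.

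For the converse, put $C:=(AB)^{-1}\in\B(\vr)$, so that $C(AB)=\Iop=(AB)C$. This is the only place the hypothesis $AB=BA$ is used: rewriting $C(AB)=C(BA)=(CB)A$ and $(AB)C=A(BC)$ gives $(CB)A=\Iop$ and $A(BC)=\Iop$, so $A$ has a bounded left inverse $CB$ and a bounded right inverse $BC$. The standard one-line computation $CB=(CB)\big(A(BC)\big)=\big((CB)A\big)(BC)=BC$ shows these coincide, so $A$ is invertible with $A^{-1}=CB=BC\in\B(\vr)$ (equivalently, $A$ is now bijective and bounded, so $A^{-1}\in\B(\vr)$ by Theorem~\ref{open}(b)). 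Then $B=A^{-1}(AB)$ exhibits $B$ as a composition of invertible operators, whence $B$ is invertible with $B^{-1}=(AB)^{-1}A\in\B(\vr)$, as one checks directly from $(CA)B=C(AB)=\Iop$ and $B(CA)=(BC)A=A^{-1}A=\Iop$; alternatively, since $AB=BA$ is symmetric in $A$ and $B$, the argument just given for $A$ applies verbatim to $B$.

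I do not anticipate a genuine obstacle. Every manipulation uses only associativity of composition and closure of $\B(\vr)$ under composition — never a linear structure on operators or left/right multiplication by scalars — so the usual quaternionic pitfalls are absent. The one point needing attention is to invoke the correct quaternionic open mapping / bounded inverse statement (Theorem~\ref{open}(b), equivalently Theorem~\ref{NT1}) whenever one wants an inverse to be bounded and right $\quat$-linear; since here each relevant inverse is written down explicitly ($B^{-1}A^{-1}$, $CB=BC$, and $CA$), even that is automatic.
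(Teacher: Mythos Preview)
Your proof is correct. The paper does not actually write out a proof of this proposition; it simply remarks that the argument follows its complex counterpart and cites a functional analysis textbook, so your explicit algebraic argument is precisely the standard proof the reference would contain.
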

\begin{proof}
	A proof follows its complex counterpart. For a complex proof see \cite{Fabi}, page 213.
\end{proof}
\begin{definition}\cite{La}\label{ID1}
Let $A\in\B(\vr)$. A closed subspace $M\subseteq\vr$ is said to be $A$-invariant  if $A(M)\subseteq M$, where $A(M)=\{A\phi~|~\phi\in M\}$. It is said to be $A$-hyperinvariant if $B(M)\subseteq M$ for every $B\in\B(\vr)$ that commutes with $A$.
\end{definition}
If $A\in\B(\vr)$, in order to be compatible with the inner product in $\vr$, the scalar multiplication of $A$ is defined as
$$(\qu A)(\phi)=A(\phi)\qu, \quad \qu\in\quat.$$

\subsection{S-Spectrum}
For a given right linear operator $A:\D(A)\subseteq V_{\quat}^R\longrightarrow V_{\quat}^R$ and $\qu\in\quat$, we define the operator $R_{\qu}(A):\D(A^{2})\longrightarrow\quat$ by  $$R_{\qu}(A)=A^{2}-2\text{Re}(\qu)A+|\qu|^{2}\Iop,$$
where $\qu=q_{0}+\bi q_1 + \bj q_2 + \bk q_3$ is a quaternion, $\text{Re}(\qu)=q_{0}$  and $|\qu|^{2}=q_{0}^{2}+q_{1}^{2}+q_{2}^{2}+q_{3}^{2}.$\\
In the literature, the operator is called pseudo-resolvent since it is not the resolvent operator of $A$ but it is the one related to the notion of spectrum as we shall see in the next definition. For more information, on the notion of $S$-spectrum the reader may consult e.g. \cite{Fab, Fab1, NFC,Jo}, and  \cite{ghimorper}.
In this setting, for $\qu\in\quat$, we can easily see that
$$R_\qu(A)=A^2-2\text{Re}(\qu)A+|\qu|^2=(A-\qu\Iop)(A-\oqu\Iop)=(A-\oqu\Iop)(A-\qu\Iop),$$
where $R_\qu(A)$ is linear in $\vr$ while $A-\qu\Iop$ and $A-\oqu\Iop$ are not linear in $\vr$.
\begin{definition}
Let $A:\D(A)\subseteq V_{\quat}^R\longrightarrow V_{\quat}^R$ be a right linear operator. The {\em $S$-resolvent set} (also called \textit{spherical resolvent} set) of $A$ is the set $\rho_{S}(A)\,(\subset\quat)$ such that the three following conditions hold true:
\begin{itemize}
\item[(a)] $\ker(R_{\qu}(A))=\{0\}$.
\item[(b)] $\text{ran}(R_{\qu}(A))$ is dense in $V_{\quat}^{R}$.
\item[(c)] $R_{\qu}(A)^{-1}:\text{ran}(R_{\qu}(A))\longrightarrow\D(A^{2})$ is bounded.
\end{itemize}
The \textit{$S$-spectrum} (also called \textit{spherical spectrum}) $\sigma_{S}(A)$ of $A$ is defined by setting $\sigma_{S}(A):=\quat\smallsetminus\rho_{S}(A)$. For a bounded linear operator $A$ we can write the resolvent set as
\begin{eqnarray*}
\rho_S(A)&=& \{\qu\in\quat~|~R_\qu(A)\in\mathcal{G}(V_{\quat}^R)\}\\
&=&\{\qu\in\quat~|~R_\qu(A)~\text{has an inverse in}~\B(V_{\quat}^R)\}\\
&=&\{\qu\in\quat~|~\text{ker}(R_\qu(A))=\{0\}\quad\text{and}\quad \text{ran}(R_\qu(A))=V_\quat^R\}
\end{eqnarray*}
and the spectrum can be written as
\begin{eqnarray*}
\sigma_S(A)&=&\quat\setminus\rho_S(A)\\
&=&\{\qu\in\quat~|~R_\qu(A)~\text{has no inverse in}~\B(V_{\quat}^R)\}\\
&=&\{\qu\in\quat~|~\text{ker}(R_\qu(A))\not=\{0\}\quad\text{or}\quad \text{ran}(R_\qu(A))\not=V_\quat^R\}.
\end{eqnarray*}
The spectrum $\sigma_S(A)$ decomposes into three major disjoint subsets as follows:
\begin{itemize}
\item[(i)] the \textit{spherical point spectrum} of $A$: $$\sigma_{pS}(A):=\{\qu\in\quat~\mid~\ker(R_{\qu}(A))\ne\{0\}\}.$$
\item[(ii)] the \textit{spherical residual spectrum} of $A$: $$\sigma_{rS}(A):=\{\qu\in\quat~\mid~\ker(R_{\qu}(A))=\{0\},\overline{\text{ran}(R_{\qu}(A))}\ne V_{\quat}^{R}~\}.$$
\item[(iii)] the \textit{spherical continuous spectrum} of $A$: $$\sigma_{cS}(A):=\{\qu\in\quat~\mid~\ker(R_{\qu}(A))=\{0\},\overline{\text{ran}(R_{\qu}(A))}= V_{\quat}^{R}, R_{\qu}(A)^{-1}\notin\B(V_{\quat}^{R}) ~\}.$$
\end{itemize}
If $A\phi=\phi\qu$ for some $\qu\in\quat$ and $\phi\in V_{\quat}^{R}\smallsetminus\{0\}$, then $\phi$ is called an \textit{eigenvector of $A$ with right eigenvalue} $\qu$. The set of right eigenvalues coincides with the point $S$-spectrum, see \cite{ghimorper}, proposition 4.5.
\end{definition}
Note also that the function $\qu\rightarrow R_\qu(A)$ is continuous and $R_\qu(A)^{-1}$ is continuous on $\rho_S(A)$ \cite{Jo}.
\begin{proposition}\cite{Fab2, ghimorper}\label{PP1}
For $A\in\B(\vr)$, the resolvent set $\rho_S(A)$ is a non-empty open set and the spectrum $\sigma_S(A)$ is a non-empty compact set.
\end{proposition}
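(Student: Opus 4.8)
I would split the statement into three parts and deal with the genuinely hard one — non-emptiness of $\sigma_S(A)$ — last. The background fact I would use throughout is that $(\B(\vr),\circ)$ is a real unital Banach algebra, so the Neumann series argument is available verbatim: if $T\in\B(\vr)$ is invertible and $\|S-T\|<\|T^{-1}\|^{-1}$, then $S=T\big(\Iop-T^{-1}(T-S)\big)$ is invertible with inverse $\big(\sum_{n\ge0}(T^{-1}(T-S))^n\big)T^{-1}$; hence $\mathcal{G}(\vr)$ is open in $\B(\vr)$.

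\emph{Openness of $\rho_S(A)$, and closedness and boundedness of $\sigma_S(A)$.} Because $R_\qu(A)=A^2-2\,\mathrm{Re}(\qu)A+|\qu|^2\Iop$ while $\qu\mapsto\mathrm{Re}(\qu)$ and $\qu\mapsto|\qu|^2$ are continuous, the map $\qu\mapsto R_\qu(A)\in\B(\vr)$ is continuous; so $\rho_S(A)=\{\qu\in\quat:R_\qu(A)\in\mathcal{G}(\vr)\}$ is the preimage of an open set, hence open, and $\sigma_S(A)$ is closed. For boundedness, write $R_\qu(A)=|\qu|^2\big(\Iop+|\qu|^{-2}(A^2-2\,\mathrm{Re}(\qu)A)\big)$ for $\qu\neq0$; since $|\mathrm{Re}(\qu)|\le|\qu|$ we have $\|A^2-2\,\mathrm{Re}(\qu)A\|\le\|A\|^2+2|\qu|\,\|A\|$, which is strictly less than $|\qu|^2$ as soon as $|\qu|>(1+\sqrt2)\,\|A\|$, so the Neumann series gives $\qu\in\rho_S(A)$. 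Thus $\sigma_S(A)\subseteq\overline{B_\quat\big(0,(1+\sqrt2)\|A\|\big)}$ is compact, and in particular $\rho_S(A)\neq\varnothing$.

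\emph{Non-emptiness of $\sigma_S(A)$ — the main obstacle.} I would argue by contradiction, assuming $\rho_S(A)=\quat$, and exploit the right $S$-resolvent operator $S_R^{-1}(\qu,A)$ of $A$ (see \cite{NFC}), which can be written $S_R^{-1}(\qu,A)=-(A-\oqu\Iop)R_\qu(A)^{-1}\in\B(\vr)$. The two ingredients I need are: (i) $\qu\mapsto S_R^{-1}(\qu,A)$ is operator-valued right slice regular on $\rho_S(A)$ — the $\B(\vr)$-valued analogue of Definition \ref{D2} — which on $\{|\qu|>\|A\|\}$ is witnessed by the expansion $S_R^{-1}(\qu,A)\phi=\sum_{n\ge0}(A^n\phi)\,\qu^{-1-n}$; and (ii) $S_R^{-1}(\qu,A)\to0$ as $|\qu|\to\infty$, with leading term the nonzero operator $\phi\mapsto\phi\,\qu^{-1}$. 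Granting these, the assumption $\rho_S(A)=\quat$ makes $S_R^{-1}(\cdot,A)$ an everywhere-defined, continuous, bounded, right slice regular $\B(\vr)$-valued function; restricting it to an arbitrary slice $L_I\cong\C$ and applying the splitting lemma, each restriction decomposes into classical holomorphic $\B(\vr)$-valued components that are bounded on all of $\C$ and vanish at infinity, hence are identically zero by Liouville's theorem. Letting $I$ run over $\mathbb{S}$ forces $S_R^{-1}(\cdot,A)\equiv0$, contradicting the nonzero leading term in (ii). Therefore $\rho_S(A)\neq\quat$, i.e. $\sigma_S(A)\neq\varnothing$.

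I expect the delicate points to be (i), namely the slice regularity of $\qu\mapsto S_R^{-1}(\qu,A)$ on \emph{all} of $\rho_S(A)$ rather than merely on the region $\{|\qu|>\|A\|\}$ where the series converges, together with the vector-valued Liouville step via the splitting lemma; both belong to the quaternionic $S$-functional calculus, and for their verification I would refer to \cite{NFC,ghimorper,Fab2} rather than reproduce them here.
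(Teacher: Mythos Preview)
The paper does not prove this proposition at all: it is stated with a citation to \cite{Fab2,ghimorper} and no argument is given. Your sketch is essentially the standard proof from those very references (and from \cite{NFC,Jo}): continuity of $\qu\mapsto R_\qu(A)$ plus openness of the invertibles gives openness of $\rho_S(A)$; a Neumann-series estimate gives boundedness of $\sigma_S(A)$ (the literature obtains the sharper inclusion $\sigma_S(A)\subseteq\{\,|\qu|\le\|A\|\,\}$, but your $(1+\sqrt2)\|A\|$ bound is perfectly valid for compactness); and non-emptiness is obtained via slice regularity of the $S$-resolvent together with a slice-wise Liouville argument.

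One point of care: your assertion $S_R^{-1}(\qu,A)\in\B(\vr)$ is not quite right in the one-sided setting of this paper. The map $\phi\mapsto -(A-\oqu\Iop)R_\qu(A)^{-1}\phi$ involves $\phi\mapsto\phi\,\oqu$, which is only $\mathbb{R}$-linear (not right $\quat$-linear) unless $\qu$ is real; so $S_R^{-1}(\qu,A)$ lives in the real Banach space of bounded $\mathbb{R}$-linear maps, not in $\B(\vr)$. This does not damage the argument: the Liouville step can be run vector-wise, i.e.\ for each fixed $\phi\in\vr$ the function $\qu\mapsto S_R^{-1}(\qu,A)\phi$ is a bounded $\vr$-valued right slice regular function vanishing at infinity, hence identically zero on each slice by the splitting lemma and the classical Liouville theorem, contradicting the nonzero leading term $\phi\,\qu^{-1}$. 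This is exactly how the cited sources handle it, so your deferral to \cite{NFC,ghimorper,Fab2} for the details is appropriate.
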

\begin{remark}\label{R1}
For $A\in\B(\vr)$, since $\sigma_S(A)$ is a non-empty compact set so is its boundary. That is, $\partial\sigma_S(A)=\partial\rho_S(A)\not=\emptyset$.
\end{remark}

\section{Surjectivity $S$-spectrum and Approximate $S$-point spectrum}
Following the complex case, for $A\in\B(\vr)$, the approximate S-point spectrum was studied in \cite{Fr}. We recall the definition and some results from \cite{Fr} as needed here. Then we define and study the surjectivity S-spectrum, in the quaternionic setting, following its complex counterpart. For the theory of complex surjectivity spectrum we refer the reader to \cite{Ai, La}.
\begin{definition}\cite{Fr}\label{D1}
Let $A\in\B(V_\quat^R)$. The {\em approximate S-point spectrum} of $A$, denoted by $\apo(A)$, is defined as
$$\apo(A)=\{\qu\in\quat~~|~~\text{there is a sequence}~~\{\phi_n\}_{n=1}^{\infty}~~\text{such that}~~\|\phi_n\|=1~~\text{and}~~\|R_\qu(A)\phi_n\|\longrightarrow 0\}.$$
\end{definition}
\begin{proposition}\label{P3}\cite{Fr}
Let $A\in\B(\vr)$, then $\sigma_{pS}(A)\subseteq\apo(A)$. 
\end{proposition}
\begin{proposition}\label{P4}\cite{Fr}
If $A\in\B(\vr)$ and $\qu\in\quat$, then the following statements are equivalent.
\begin{enumerate}
\item[(a)] $\qu\not\in\apo(A).$
\item[(b)] $\text{ker}(R_\qu(A))=\{0\}$ and $\text{ran}(R_\qu(A))$ is closed.
\item[(c)] There exists a constant $c\in\R$, $c>0$ such that $\|R_\qu(A)\phi\|\geq c\|\phi\|$ for all $\phi\in\D(A^2)$.
\end{enumerate}
\end{proposition}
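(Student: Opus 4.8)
The plan is to reduce everything to a statement about the single bounded operator $R_\qu(A)$. Since $A\in\B(\vr)$, the operator $A^2$ is bounded as well, so $\D(A^2)=\vr$, and $R_\qu(A)=A^2-2\text{Re}(\qu)A+|\qu|^2\Iop$ belongs to $\B(\vr)$; thus all three statements in the proposition are assertions about $R_\qu(A)$ acting on the whole space. I would then prove the equivalences in two blocks: (b)$\,\Leftrightarrow\,$(c) by invoking the results already established for general bounded operators, and (c)$\,\Leftrightarrow\,$(a) by a direct sequential argument.

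For (b)$\,\Leftrightarrow\,$(c): by Definition \ref{BBD}, saying $R_\qu(A)$ is bounded below is exactly saying $\kr(R_\qu(A))=\{0\}$ and $\ra(R_\qu(A))$ is closed, which is statement (b); by Proposition \ref{BBP}, $R_\qu(A)$ is bounded below if and only if there exists $c>0$ with $\|R_\qu(A)\phi\|\geq c\|\phi\|$ for all $\phi\in\vr=\D(A^2)$, which is statement (c). Hence (b) and (c) are equivalent. (Alternatively this is immediate from the bounded inverse theorem, Theorem \ref{NT1}.)

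For (c)$\,\Rightarrow\,$(a): if $\|R_\qu(A)\phi\|\geq c\|\phi\|$ for some $c>0$, then along any sequence $\{\phi_n\}$ with $\|\phi_n\|=1$ we have $\|R_\qu(A)\phi_n\|\geq c>0$ for all $n$, so $\|R_\qu(A)\phi_n\|\not\to 0$; as no such sequence exists, $\qu\notin\apo(A)$. For (a)$\,\Rightarrow\,$(c) I would argue by contraposition: if (c) fails, then for each $n\in\N$ the constant $1/n$ is not a valid lower bound, so there is $\psi_n\in\vr$, $\psi_n\neq 0$, with $\|R_\qu(A)\psi_n\|<\tfrac1n\|\psi_n\|$; setting $\phi_n:=\psi_n/\|\psi_n\|$ gives $\|\phi_n\|=1$ and $\|R_\qu(A)\phi_n\|<1/n\to 0$, so $\qu\in\apo(A)$. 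I do not expect a genuine obstacle here; the only points needing care are the observation $\D(A^2)=\vr$ and the fact that $R_\qu(A)\in\B(\vr)$, after which the proof reduces to the already-available quaternionic analogues (Definition \ref{BBD}, Proposition \ref{BBP}) of the standard complex facts, whose proofs transfer verbatim.
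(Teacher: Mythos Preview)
Your proof is correct. Note, however, that the paper does not supply its own proof of this proposition: it is quoted from \cite{Fr} and stated without argument, so there is no in-paper proof to compare against. Your reduction to the single bounded operator $R_\qu(A)$ (using $\D(A^2)=\vr$), followed by invoking Definition~\ref{BBD} and Proposition~\ref{BBP} for (b)$\Leftrightarrow$(c) and the direct sequential/contrapositive argument for (a)$\Leftrightarrow$(c), is exactly the standard route and matches what one finds in the complex setting and in \cite{Fr}.
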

\begin{theorem}\label{T1}\cite{Fr}Let $A\in\B(\vr)$, then $\apo(A)$ is a non-empty closed subset of $\quat$ and $\partial\sigma_S(A)\subseteq\apo(A),$ where $\partial\sigma_S(A)$ is the boundary of $\sigma_S(A)$.
\end{theorem}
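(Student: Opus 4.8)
The plan is to establish the three assertions separately: that $\apo(A)$ is closed, that $\partial\sigma_S(A)\subseteq\apo(A)$, and that $\apo(A)\ne\emptyset$. The last one will come essentially for free: by Proposition \ref{PP1} the set $\sigma_S(A)$ is a non-empty compact proper subset of $\quat\cong\R^4$ (proper since $\rho_S(A)\ne\emptyset$), so by Remark \ref{R1} its boundary is non-empty, and once the inclusion $\partial\sigma_S(A)\subseteq\apo(A)$ is in hand, non-emptiness of $\apo(A)$ follows. Throughout I will use that $A$ is bounded, so $\D(A^2)=\vr$, and the elementary identity
\[
R_\qu(A)-R_\pu(A)=-2\big(\text{Re}(\qu)-\text{Re}(\pu)\big)A+\big(|\qu|^2-|\pu|^2\big)\Iop,
\]
which gives the estimate $\|R_\qu(A)-R_\pu(A)\|\le 2\,|\text{Re}(\qu)-\text{Re}(\pu)|\,\|A\|+\big||\qu|^2-|\pu|^2\big|$, so $\qu\mapsto R_\qu(A)$ is continuous in operator norm.

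For closedness I would show that $\quat\setminus\apo(A)$ is open. If $\qu\notin\apo(A)$, then by Proposition \ref{P4}(c) there is $c>0$ with $\|R_\qu(A)\phi\|\ge c\|\phi\|$ for all $\phi\in\vr$. Using the estimate above, choose $\delta>0$ so that $\|R_\qu(A)-R_\pu(A)\|<c/2$ whenever $|\pu-\qu|<\delta$; then for such $\pu$ and all $\phi\in\vr$,
\[
\|R_\pu(A)\phi\|\ge\|R_\qu(A)\phi\|-\|R_\qu(A)-R_\pu(A)\|\,\|\phi\|\ge\tfrac{c}{2}\|\phi\|,
\]
so $\pu\notin\apo(A)$ again by Proposition \ref{P4}. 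Hence $\apo(A)$ is closed.

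For the inclusion $\partial\sigma_S(A)\subseteq\apo(A)$, fix $\qu\in\partial\sigma_S(A)$. Since $\sigma_S(A)$ is closed and $\rho_S(A)$ is open, $\qu\in\sigma_S(A)$ and there is a sequence $\qu_n\in\rho_S(A)$ with $\qu_n\to\qu$, so $R_{\qu_n}(A)\in\mathcal{G}(\vr)$ for each $n$. I claim $\|R_{\qu_n}(A)^{-1}\|\to\infty$. If not, pass to a subsequence with $\|R_{\qu_n}(A)^{-1}\|\le M$; writing $R_\qu(A)=R_{\qu_n}(A)\big(\Iop+R_{\qu_n}(A)^{-1}(R_\qu(A)-R_{\qu_n}(A))\big)$, the norm estimate forces $\|R_{\qu_n}(A)^{-1}(R_\qu(A)-R_{\qu_n}(A))\|<1$ for $n$ large, so the second factor is invertible and hence $R_\qu(A)\in\mathcal G(\vr)$, contradicting $\qu\in\sigma_S(A)$. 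Now pick unit vectors $\psi_n$ with $\|R_{\qu_n}(A)^{-1}\psi_n\|\ge\tfrac12\|R_{\qu_n}(A)^{-1}\|$ and set $\phi_n=R_{\qu_n}(A)^{-1}\psi_n/\|R_{\qu_n}(A)^{-1}\psi_n\|$; then $\|\phi_n\|=1$ and $\|R_{\qu_n}(A)\phi_n\|\le 2/\|R_{\qu_n}(A)^{-1}\|\to 0$, whence
\[
\|R_\qu(A)\phi_n\|\le\|R_{\qu_n}(A)\phi_n\|+\|R_\qu(A)-R_{\qu_n}(A)\|\,\|\phi_n\|\longrightarrow 0,
\]
so $\qu\in\apo(A)$ by Definition \ref{D1}. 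Combined with Remark \ref{R1}, this also yields $\apo(A)\ne\emptyset$.

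The one point needing care is the inversion step: $\B(\vr)$ is only a real Banach algebra with identity, so I should note that $\Iop+T$ with $\|T\|<1$ is invertible there because the Neumann series $\sum_{k\ge 0}(-T)^k$ converges absolutely in operator norm and furnishes a two-sided inverse — no complex structure is required. I expect this, together with the bookkeeping around continuity of $\qu\mapsto R_\qu(A)$ via the explicit bound, to be the only real obstacle; the rest of the argument parallels the classical complex proof of the corresponding fact for the approximate point spectrum.
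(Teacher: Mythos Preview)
Your argument is correct and complete. The paper does not actually supply a proof of this theorem: it is stated with the citation \cite{Fr} and no proof is given in the present manuscript, so there is nothing to compare against here. Your approach is the standard one for the classical approximate point spectrum adapted to $R_\qu(A)$: closedness via the lower-bound characterization in Proposition~\ref{P4} together with norm-continuity of $\qu\mapsto R_\qu(A)$, the boundary inclusion via a Neumann-series/blow-up argument for $\|R_{\qu_n}(A)^{-1}\|$, and non-emptiness via Remark~\ref{R1}. The only subtlety you flag --- that the Neumann series works in the real Banach algebra $\B(\vr)$ --- is handled correctly.
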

\begin{theorem}\label{T2}\cite{Fr}
Let $A\in\B(\vr)$ and $\qu\in\quat$, then the following statements are equivalent.
\begin{enumerate}
\item [(a)]$\qu\not\in\apo(A)$.
\item[(d)]$\text{ran}(R_{\oqu}(A^\dagger))=\vr.$
\end{enumerate}
\end{theorem}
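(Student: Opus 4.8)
The plan is to reduce the equivalence to statements already available, chiefly Proposition \ref{P4} together with the duality facts in Propositions \ref{IP30} and \ref{NP1}. First I would record the key structural identity for the pseudo-resolvent: since $R_\qu(A)=A^2-2\mathrm{Re}(\qu)A+|\qu|^2\Iop$ and $A^\dagger$ satisfies $(A^2)^\dagger=(A^\dagger)^2$, $\mathrm{Re}(\oqu)=\mathrm{Re}(\qu)$, $|\oqu|=|\qu|$ (all with real coefficients, so no non-commutativity obstruction arises), one gets
\begin{equation}\label{Eadj}
R_\qu(A)^\dagger=(A^\dagger)^2-2\mathrm{Re}(\qu)A^\dagger+|\qu|^2\Iop=R_{\qu}(A^\dagger)=R_{\oqu}(A^\dagger),
\end{equation}
the last equality because $R_\pu$ depends on $\pu$ only through $\mathrm{Re}(\pu)$ and $|\pu|$. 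Thus $\ra(R_{\oqu}(A^\dagger))=\ra(R_\qu(A)^\dagger)$, and the claim becomes: $\qu\notin\apo(A)$ if and only if $R_\qu(A)^\dagger$ is surjective.

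Next I would run the two implications through Proposition \ref{P4}. For $(a)\Rightarrow(d)$: if $\qu\notin\apo(A)$, then by Proposition \ref{P4} $\kr(R_\qu(A))=\{0\}$ and $\ra(R_\qu(A))$ is closed; by Proposition \ref{NP1} $\ra(R_\qu(A)^\dagger)$ is then closed as well, and by Proposition \ref{IP30}(b) we have $\kr(R_\qu(A))=\ra(R_\qu(A)^\dagger)^\perp$, so $\ra(R_\qu(A)^\dagger)^\perp=\{0\}$; a closed subspace whose orthogonal complement is trivial is the whole space (Proposition \ref{ort}(b)), hence $\ra(R_\qu(A)^\dagger)=\vr$, i.e. $\ra(R_{\oqu}(A^\dagger))=\vr$ via \eqref{Eadj}. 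For $(d)\Rightarrow(a)$: if $\ra(R_\qu(A)^\dagger)=\vr$, it is in particular closed, so $\ra(R_\qu(A))$ is closed by Proposition \ref{NP1}; moreover $\kr(R_\qu(A))=\ra(R_\qu(A)^\dagger)^\perp=\vr^\perp=\{0\}$ by Proposition \ref{IP30}(b). By the converse direction of Proposition \ref{P4}, these two facts give $\qu\notin\apo(A)$.

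The only genuinely delicate point is verifying \eqref{Eadj}, i.e. that adjoints behave well for $R_\qu(A)$. This is exactly the place where one usually worries about the quaternionic pathology $(\qu B)^\dagger\neq\oqu B^\dagger$ mentioned in the introduction; the saving feature here is that $R_\qu(A)$ is built from $A$ and $\Iop$ using only \emph{real} scalars $-2\mathrm{Re}(\qu)$ and $|\qu|^2$, so $(cB)^\dagger=cB^\dagger$ holds for $c\in\R$ and the identity $(A+B)^\dagger=A^\dagger+B^\dagger$, $(AB)^\dagger=B^\dagger A^\dagger$, $(A^2)^\dagger=(A^\dagger)^2$ suffice. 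I would also note $R_\qu(A)^\dagger\in\B(\vr)$ by Proposition \ref{dag}, so that Propositions \ref{IP30} and \ref{NP1} apply to it. With \eqref{Eadj} in hand the argument above is routine; the remaining care is simply to invoke Proposition \ref{NP1} in both directions and Proposition \ref{IP30}(b) for the kernel--range orthogonality, exactly mirroring the classical complex Hilbert-space proof that an operator is bounded below iff its adjoint is surjective.
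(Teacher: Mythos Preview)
Your proof is correct. Note that the paper itself does not prove Theorem~\ref{T2}; it is quoted from \cite{Fr} without argument. That said, your approach---compute $R_\qu(A)^\dagger=R_{\oqu}(A^\dagger)$ via the real-coefficient observation, then combine Proposition~\ref{P4} with the kernel/range duality of Propositions~\ref{IP30} and~\ref{NP1}---is exactly the argument the paper deploys later in proving the second equality of Proposition~\ref{su1}(b), namely $\apo(A)=\sus(A^\dagger)$, which is the same statement rephrased. So your proposal is not just correct but essentially the paper's own method, applied a few pages early.
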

\begin{proposition}\label{C1}\cite{Fr}
If $A\in\B(\vr)$, then $\partial\sigma_S(A)\subseteq\apo(A)\cap\apo(A^\dagger)^*$.
\end{proposition}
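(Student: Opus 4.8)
The plan is to build on Theorem~\ref{T1}, which already yields $\partial\sigma_S(A)\subseteq\apo(A)$, so that the only genuinely new content is the inclusion $\partial\sigma_S(A)\subseteq\apo(A^\dagger)^*$; this I would obtain by transferring the boundary inclusion from $A$ to $A^\dagger$.

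First I would record the identity $R_\qu(A)^\dagger=R_\qu(A^\dagger)$ for every $\qu\in\quat$. Indeed $2\,\text{Re}(\qu)$ and $|\qu|^2$ are real scalars, hence fixed by $\dagger$, while $(A^2)^\dagger=(A^\dagger)^2$; by Proposition~\ref{dag} one has $A^\dagger\in\B(\vr)$, so all of this is well defined. Combined with the elementary equivalence ``$T$ invertible in $\B(\vr)$ $\Longleftrightarrow$ $T^\dagger$ invertible in $\B(\vr)$'' — if $T^{-1}\in\B(\vr)$, taking adjoints in $T^{-1}T=TT^{-1}=\Iop$ shows $(T^{-1})^\dagger$ is a bounded two-sided inverse of $T^\dagger$, and the converse follows since $(T^\dagger)^\dagger=T$ — this gives $\rho_S(A)=\rho_S(A^\dagger)$, hence $\sigma_S(A^\dagger)=\sigma_S(A)$ and in particular $\partial\sigma_S(A^\dagger)=\partial\sigma_S(A)$.

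Next I would apply Theorem~\ref{T1} to $A^\dagger$ (legitimate because $A^\dagger\in\B(\vr)$), obtaining $\partial\sigma_S(A^\dagger)\subseteq\apo(A^\dagger)$, and hence $\partial\sigma_S(A)\subseteq\apo(A^\dagger)$ by the previous step. Finally I would observe that for any $B\in\B(\vr)$ one has $R_\qu(B)=R_{\oqu}(B)$, since $\text{Re}(\oqu)=\text{Re}(\qu)$ and $|\oqu|=|\qu|$; consequently $\apo(B)=\apo(B)^*$, and in particular $\apo(A^\dagger)=\apo(A^\dagger)^*$. Therefore $\partial\sigma_S(A)\subseteq\apo(A^\dagger)^*$, and intersecting with $\partial\sigma_S(A)\subseteq\apo(A)$ from Theorem~\ref{T1} finishes the argument.

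I do not anticipate a real obstacle. The only step that merits a careful, self-contained justification in the right linear setting is the equivalence ``$T$ invertible $\Longleftrightarrow$ $T^\dagger$ invertible'', which I would derive directly from the defining relation $\langle\psi\mid T\phi\rangle=\langle T^\dagger\psi\mid\phi\rangle$ of the adjoint (alternatively it follows from Propositions~\ref{IP30} and~\ref{NP1} together with the bounded inverse theorem, Theorem~\ref{NT1}). I note in passing that Theorem~\ref{T2} identifies $\apo(A^\dagger)^*$ precisely as the set of $\qu$ for which $R_\qu(A)$ fails to be surjective, so this proposition is the quaternionic analogue of the classical fact that the boundary of the spectrum lies in both the approximate point spectrum and the surjectivity spectrum; but the symmetry argument above is the most economical route given the results already assembled.
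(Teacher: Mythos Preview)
Your argument is correct. The paper does not supply its own proof of this proposition: it is quoted from \cite{Fr} without argument, so there is nothing to compare line by line. Your route---$R_\qu(A)^\dagger=R_\qu(A^\dagger)$ because the coefficients $2\,\mathrm{Re}(\qu)$ and $|\qu|^2$ are real, hence $\sigma_S(A)=\sigma_S(A^\dagger)$ and $\partial\sigma_S(A)=\partial\sigma_S(A^\dagger)$, then Theorem~\ref{T1} for $A^\dagger$, then the observation $R_\qu=R_{\overline\qu}$ forcing $\apo(A^\dagger)=\apo(A^\dagger)^*$---is exactly the natural symmetry argument and each step is sound in the right-linear setting. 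The equality $\sigma_S(A)=\sigma_S(A^\dagger)$ that you derive directly is, incidentally, re-proved later in the paper as Proposition~\ref{su1}(c) via $\sigma_S(A)=\apo(A)\cup\sigma_c^S(A)$ and the identities $\apo(A)=\sus(A^\dagger)$, $\sigma_c^S(A)=\sigma_{pS}(A^\dagger)$; your adjoint-invertibility argument is the more economical one here.
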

Following the complex formalism in the following we define the S-compression spectrum for an operator $A\in\B(\vr)$.
\begin{definition}\label{DC}
The spherical compression spectrum of an operator $A\in\B(\vr)$, denoted by $\sigma_c^S(A)$, is defined as
$$\sigma_c^S(A)=\{\qu\in\quat~~|~~\text{ran}(R_\qu(A))~~~\text{is not dense in}~~\vr~\}.$$
\end{definition}
\begin{proposition}\label{CP1}\cite{Fr}
Let $A\in\B(\vr)$ and $\qu\in\quat$. Then,
\begin{enumerate}
\item[(a)] $\qu\in\sigma_c^S(A)$ if and only if $\oqu\in\sigma_{pS}(A)$.
\item[(b)] $\sigma_S(A)=\apo(A)\cup\sigma_c^S(A)$.
\end{enumerate}
\end{proposition}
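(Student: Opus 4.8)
Both parts are the quaternionic analogues of standard facts in complex spectral theory (see \cite{Ai,La}) and are recorded in \cite{Fr}; my plan is to follow the complex arguments, the one point needing care being the adjoint of $R_\qu(A)$. The tools I would use are Proposition \ref{IP30}(a) (which identifies $\ra(B)^{\perp}$ with $\kr(B^{\dagger})$), Proposition \ref{P4} (which characterises when $\qu\notin\apo(A)$, namely by triviality of $\kr(R_\qu(A))$ together with closedness of $\ra(R_\qu(A))$), Proposition \ref{ort}(b), and the open mapping theorem, Theorem \ref{open}(b).

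For part (a), the first step is the identity $R_\qu(A)^{\dagger}=R_{\oqu}(A^{\dagger})$, which I would obtain by taking adjoints in $R_\qu(A)=A^{2}-2\,\text{Re}(\qu)A+|\qu|^{2}\Iop$, using $(A^{2})^{\dagger}=(A^{\dagger})^{2}$ together with the facts that $\text{Re}(\qu)$ and $|\qu|^{2}$ are real (hence self-conjugate and central in $\quat$) and that $\text{Re}(\oqu)=\text{Re}(\qu)$, $|\oqu|=|\qu|$. The second step is the Hilbert-space observation that a subspace $M\subseteq\vr$ is dense if and only if $M^{\perp}=\{0\}$, which follows from $\vr=\overline{M}\oplus\overline{M}^{\perp}$ (Proposition \ref{ort}(b)) and $\overline{M}^{\perp}=M^{\perp}$. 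Applying this with $M=\ra(R_\qu(A))$ and using Proposition \ref{IP30}(a) yields the chain: $\qu\in\sigma_{c}^{S}(A)$ iff $\ra(R_\qu(A))$ is not dense, iff $\kr(R_\qu(A)^{\dagger})\neq\{0\}$, iff $\kr(R_{\oqu}(A^{\dagger}))\neq\{0\}$, and the last condition says precisely that $\oqu$ lies in the spherical point spectrum of $A^{\dagger}$.

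For part (b) I would prove the two inclusions. First, $\apo(A)\cup\sigma_{c}^{S}(A)\subseteq\sigma_{S}(A)$: if $\qu\in\rho_{S}(A)$ then $R_\qu(A)^{-1}\in\B(\vr)$, so $\|R_\qu(A)\phi\|\geq\|R_\qu(A)^{-1}\|^{-1}\|\phi\|$ for all $\phi\in\vr$, whence $\qu\notin\apo(A)$ by Proposition \ref{P4}; and if $\ra(R_\qu(A))$ is not dense it is certainly not all of $\vr$, so $R_\qu(A)$ is not invertible, i.e. $\qu\in\sigma_{S}(A)$. Conversely, let $\qu\in\sigma_{S}(A)$ and suppose $\qu\notin\apo(A)$; by Proposition \ref{P4} we then have $\kr(R_\qu(A))=\{0\}$ and $\ra(R_\qu(A))$ closed. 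If $\ra(R_\qu(A))$ were equal to $\vr$, then $R_\qu(A)$ would be a bounded bijective right linear operator, hence $R_\qu(A)^{-1}\in\B(\vr)$ by Theorem \ref{open}(b), contradicting $\qu\in\sigma_{S}(A)$. Thus $\ra(R_\qu(A))$ is a proper closed subspace of $\vr$, in particular not dense, so $\qu\in\sigma_{c}^{S}(A)$; hence $\sigma_{S}(A)\subseteq\apo(A)\cup\sigma_{c}^{S}(A)$.

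I do not anticipate a real obstacle: the argument is essentially a transcription of the complex proof, with Proposition \ref{ort} and Theorem \ref{open} supplying the Hilbert-space infrastructure. The only place where quaternionic non-commutativity has to be watched is in establishing $R_\qu(A)^{\dagger}=R_{\oqu}(A^{\dagger})$, and in remembering that the factors $A-\qu\Iop$ and $A-\oqu\Iop$ of $R_\qu(A)$ are not right linear, so that every manipulation must be carried out with the genuinely right linear operator $R_\qu(A)$ itself.
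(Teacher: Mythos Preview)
The paper does not supply its own proof of this proposition; it is quoted from \cite{Fr} without argument, so there is no in-paper proof to compare against. Your approach is the standard one and part (b) is handled correctly: the combination of Proposition~\ref{P4} and the open mapping theorem (Theorem~\ref{open}(b)) gives both inclusions exactly as you indicate.

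For part (a), your computation is correct as written, but note what you have actually established:
\[
\qu\in\sigma_c^S(A)\ \Longleftrightarrow\ \kr\bigl(R_\qu(A)^{\dagger}\bigr)\neq\{0\}\ \Longleftrightarrow\ \kr\bigl(R_{\oqu}(A^{\dagger})\bigr)\neq\{0\}\ \Longleftrightarrow\ \oqu\in\sigma_{pS}(A^{\dagger}),
\]
that is, $\oqu$ lies in the spherical point spectrum of the \emph{adjoint} $A^{\dagger}$, not of $A$ as the printed statement reads. This is the correct conclusion---and indeed it is what the paper itself proves and uses in Proposition~\ref{su1}(a), where $\sigma_c^S(A)=\sigma_{pS}(A^{\dagger})$ is established---so the appearance of $A$ rather than $A^{\dagger}$ in the statement of Proposition~\ref{CP1}(a) is evidently a typographical slip. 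You should flag this explicitly rather than silently ending your chain of equivalences at $A^{\dagger}$ and leaving the reader to reconcile the mismatch. It is also worth remarking that, because $\text{Re}(\oqu)=\text{Re}(\qu)$ and $|\oqu|=|\qu|$, one has $R_{\oqu}(B)=R_{\qu}(B)$ for every operator $B$, so the passage $\qu\mapsto\oqu$ in the statement is immaterial: the assertion is simply $\sigma_c^S(A)=\sigma_{pS}(A^{\dagger})$.
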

Since the S-surjectivity spectrum and its connection to other parts of the spectrum have not been addressed yet, we shall define it and study some of its properties according to \cite{La}. Later we shall also investigate its connection to Kato S-spectrum and local S-spectrum.
\begin{definition}\label{su} Let $A\in\B(\vr)$. The surjectivity S-spectrum of $A$ is defined as
	$$\sus(A)=\{\qu\in\quat~|~\text{ran}(R_{\qu}(A)\not=\vr\}.$$
\end{definition}
Clearly we have
\begin{equation}\label{sue1}
\sigma_c^S(A)\subseteq\sus(A)\quad\text{and}\quad\sigma_S(A)=\sigma_{pS}(A)\cup\sus(A).
\end{equation}
\begin{proposition}\label{su1} Let $A\in\B(\vr)$. Then $A$ has the following properties.
	\begin{enumerate}
		\item[(a)]$\sigma_{pS}(A)\subseteq\sigma_c^S(A^\dagger)~~\text{and}~~\sigma_c^S(A)=\sigma_{pS}(A^\dagger)$.
		\item[(b)]$\sus(A)=\apo(A^\dagger)~~\text{and}~~\apo(A)=\sus(A^\dagger).$
		\item[(c)]$\sigma_S(A)=\sigma_S(A^\dagger).$
	\end{enumerate}
\end{proposition}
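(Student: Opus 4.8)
The plan is to deduce all three statements from two elementary remarks about the pseudo-resolvent, after which everything reduces to short manipulations with the range–kernel duality of an adjoint. First, since $\text{Re}(\qu)$ and $|\qu|^2$ are \emph{real} scalars and $(CD)^\dagger=D^\dagger C^\dagger$, one checks immediately that
$$R_\qu(A)^\dagger=(A^\dagger)^2-2\,\text{Re}(\qu)\,A^\dagger+|\qu|^2\Iop=R_\qu(A^\dagger)\qquad(\qu\in\quat),$$
so ``take the adjoint of $R_\qu$'' and ``replace $A$ by $A^\dagger$ in $R_\qu$'' are the same operation; this is precisely where the failure of $(\qu A)^\dagger=\oqu A^\dagger$ does no harm, because only the real coefficients $\text{Re}(\qu),|\qu|^2$ appear. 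I will also use $A^{\dagger\dagger}=A$, which is immediate from \eqref{Ad1}. Secondly, $R_\qu(A)$ depends on $\qu$ only through $\text{Re}(\qu)$ and $|\qu|$, hence $R_\qu(A)=R_{\oqu}(A)$; consequently each of $\sigma_{pS}$, $\sigma_c^S$, $\apo$, $\sus$ is invariant under $\qu\mapsto\oqu$. These two observations I would record at the start of the proof.

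\emph{Part (a).} For arbitrary $B\in\B(\vr)$ and $\qu\in\quat$, the condition $\qu\in\sigma_c^S(B)$ means $\ra(R_\qu(B))$ is not dense, i.e.\ $\ra(R_\qu(B))^\perp\neq\{0\}$, which by Proposition \ref{IP30}(a) is $\ker\big(R_\qu(B)^\dagger\big)\neq\{0\}$; since $R_\qu(B)^\dagger=R_\qu(B^\dagger)$ this is $\qu\in\sigma_{pS}(B^\dagger)$. Thus $\sigma_c^S(B)=\sigma_{pS}(B^\dagger)$ for every $B\in\B(\vr)$. Taking $B=A$ gives the second (stated) identity $\sigma_c^S(A)=\sigma_{pS}(A^\dagger)$, and taking $B=A^\dagger$ (using $A^{\dagger\dagger}=A$) gives $\sigma_c^S(A^\dagger)=\sigma_{pS}(A)$, which in particular yields the first stated inclusion — in fact an equality.

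\emph{Parts (b) and (c).} By Theorem \ref{T2}, for any $B\in\B(\vr)$ one has $\qu\notin\apo(B)\iff\ra\big(R_{\oqu}(B^\dagger)\big)=\vr$; since $R_{\oqu}(B^\dagger)=R_\qu(B^\dagger)$ this says $\qu\notin\sus(B^\dagger)$, so $\apo(B)=\sus(B^\dagger)$. Specializing to $B=A$ and to $B=A^\dagger$ gives both identities of (b). For (c) I would note that the adjoint of a member of $\mathcal{G}(\vr)$ again lies in $\mathcal{G}(\vr)$, its inverse being $\big(R_\qu(A)^{-1}\big)^\dagger$, which is bounded by Proposition \ref{dag}; combined with $R_\qu(A)^\dagger=R_\qu(A^\dagger)$ and $A^{\dagger\dagger}=A$ this gives $R_\qu(A)\in\mathcal{G}(\vr)\iff R_\qu(A^\dagger)\in\mathcal{G}(\vr)$, hence $\rho_S(A)=\rho_S(A^\dagger)$ and $\sigma_S(A)=\sigma_S(A^\dagger)$. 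Alternatively, (c) drops out by assembling $\sigma_S(A^\dagger)=\apo(A^\dagger)\cup\sigma_c^S(A^\dagger)=\sus(A)\cup\sigma_{pS}(A)=\sigma_S(A)$ from Proposition \ref{CP1}(b), \eqref{sue1}, and parts (a)--(b).

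\emph{Main difficulty.} There is no deep obstacle here: granted the earlier results, the proposition is essentially bookkeeping. The one point that genuinely needs care is keeping the conjugations straight — one must actually verify the identity $R_\qu(A)^\dagger=R_\qu(A^\dagger)$ rather than rely on a false $(\qu A)^\dagger=\oqu A^\dagger$ rule, and one must invoke the $\qu\mapsto\oqu$-invariance of $\sus$ and $\apo$ so that the conjugate $\oqu$ produced by Theorem \ref{T2} does not leave a spurious $(\cdot)^*$ in the final identities.
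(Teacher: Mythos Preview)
Your proof is correct and, at the level of ideas, rests on the same pillar as the paper's argument: the range--kernel duality of Proposition~\ref{IP30} together with the identification $R_\qu(A)^\dagger=R_\qu(A^\dagger)$. The paper, however, does not package this identification and the $\qu\mapsto\oqu$ invariance as preliminary remarks, nor does it invoke Theorem~\ref{T2} for part~(b); instead it argues each inclusion in (a) and (b) by hand, chasing through Propositions~\ref{IP30}, \ref{P4} and \ref{NP1} (e.g.\ deducing ``$R_\qu(A)$ surjective $\Rightarrow R_\qu(A^\dagger)$ bounded below'' via the closed-range theorem and the bounded inverse theorem), and derives (c) only by the assembly route you give as your alternative. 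Your route is more economical --- it turns (b) into a one-line corollary of Theorem~\ref{T2} --- and it also shows that the first statement of (a) is in fact an equality, which the paper's proof of (a) does not record. The only caution is that Theorem~\ref{T2} is itself proved (in \cite{Fr}) from the same range--kernel machinery, so the gain is organizational rather than logical.
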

\begin{proof} (a)~~
Let $\qu\in\quat\setminus\sigma_c^S(A^\dagger)$, then $\text{ran}(R_\qu(A^\dagger))$ is dense in $\vr$.	From proposition \ref{IP30} we have $\kr(R_\qu(A))=\text{ran}(R_\qu(A^\dagger))^\perp$. Let $\phi\in\kr(R_\qu(A))$ and let $\psi\in\overline{\text{ran}(R_\qu(A^\dagger))}=\vr$. Then there exists a sequence $\{\psi_n\}\subseteq\text{ran}(R_\qu(A^\dagger))$ such that $\psi_n\longrightarrow\psi$ as $n\longrightarrow\infty$. Further, since $\langle\phi|\psi_n\rangle=0$ for all $n$, we have $\langle\phi|\psi\rangle=0$. That is, $\langle\phi|\psi\rangle=0$ for all $\psi\in\vr$, and hence $\phi=0$. Therefore $\kr(R_\qu(A))=\{0\}$ and which implies $\qu\in\quat\setminus\sigma_{pS}(A)$. Thus $\sigma_{pS}(A)\subseteq\sigma_c^S(A^\dagger)$.\\
By the preceding paragraph, $\sigma_{ps}(A^\dagger)\subseteq\sigma_{c}^S(A)$. To see other inclusion, take $\qu\notin\sigma_{ps}(A^\dagger)$. Then $\ker(R_\qu(A^\dagger))=\text{ran}(R_\qu(A))^\perp=\{0\}$. This implies $\overline{\text{ran}(R_\qu(A))}=\vr$. Thus $\qu\notin\sigma_{c}^S(A)$. 
\\(b)~Given any $\qu\in\quat\setminus\sus(A)$, we have $\ra(R_\qu(A))=\vr$. Since, by proposition \ref{IP30}, $\ra(R_\qu(A))^\perp=\kr(R_\qu(A)^\dagger)$, we get $\kr(R_\qu(A^\dagger))=\{0\}$. Therefore, by the bounded inverse theorem, $R_\qu(A^\dagger)^{-1}$ is bounded, so $R_\qu(A^\dagger)$ is bounded below. Therefore, by proposition \ref{P4}, we have $\qu\in\quat\setminus\apo(A^\dagger)$, and hence $\apo(A^\dagger)\subseteq\sus(A)$. Conversely, let $\qu\notin\apo(A^\dagger)$ then by proposition \ref{P4} we have $\kr(R_\qu(A^\dagger))=\{0\}$ and $\ra(R_\qu(A^\dagger))$ is closed. Therefore, $\ra(R_\qu(A))=\kr(R_\qu(A^\dagger))^\perp=\{0\}^\perp=\vr$. Thus $\qu\notin\sus(T)$, and hence $\sus(A)\subseteq\apo(A^\dagger)$. All together we get $\sus(A)=\apo(A^\dagger)$.\\
For the second equality, let $\qu\notin\apo(A)$, then by proposition \ref{P4} we have $\kr(R_\qu(A))=\{0\}$ and $\ra(R_\qu(A))$ is closed. Therefore, by proposition \ref{NP1}, $\ra(R_\qu(A^\dagger))$ is closed, and also by proposition \ref{IP30}, $\vr=\{0\}^\perp=\kr(R_\qu(A))^\perp=\ra(R_\qu(A^\dagger))$. Thus $\qu\notin\sus(A^\dagger)$, hence $\sus(A^\dagger)\subseteq\apo(A)$. For the other inclusion, let $\qu\notin\sus(A^\dagger)$, then $\ra(R_\qu(A^\dagger))=\vr$. By proposition \ref{IP30}, $\ra(R_\qu(A^\dagger))^\perp=\kr(R_\qu(A))=\{0\}$. Since $\ra(R_\qu(A^\dagger))$ is closed, by proposition \ref{NP1}, $\ra(R_\qu(A))$ is closed. Therefore, by proposition \ref{P4}, $\qu\notin\apo(A)$, and hence $\apo(A)\subseteq\sus(A^\dagger)$. Thus $\apo(A)=\sus(A^\dagger)$.\\
(c)~~From part(b) of proposition \ref{CP1},  above parts (a),(b) and equation \ref{sue1}, we get
$$\sigma_S(A)=\apo(A)\cup\sigma_c(A)=\sus(A^\dagger)\cup\sigma_{pS}(A^\dagger)=\sigma_S(A^\dagger).$$
\end{proof}
\begin{proposition}\label{su2}
For $A\in\B(\vr)$, $\sus(A)$ is closed and $\partial\sigma_S(A)\subseteq\sus(A).$
\end{proposition}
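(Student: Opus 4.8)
The plan is to deduce both assertions from the duality relations already established, rather than arguing directly. First I would recall from Proposition \ref{su1}(b) that $\sus(A)=\apo(A^\dagger)$. Since $A\in\B(\vr)$, Proposition \ref{dag} gives $A^\dagger\in\B(\vr)$, so Theorem \ref{T1} applies to $A^\dagger$ and yields that $\apo(A^\dagger)$ is a (non-empty) closed subset of $\quat$. Hence $\sus(A)$ is closed.

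For the inclusion $\partial\sigma_S(A)\subseteq\sus(A)$, I would combine Proposition \ref{su1}(c), which gives $\sigma_S(A)=\sigma_S(A^\dagger)$, with Theorem \ref{T1} applied to $A^\dagger$, which gives $\partial\sigma_S(A^\dagger)\subseteq\apo(A^\dagger)$. Since $\sigma_S(A)=\sigma_S(A^\dagger)$ forces $\partial\sigma_S(A)=\partial\sigma_S(A^\dagger)$, we obtain
$$\partial\sigma_S(A)=\partial\sigma_S(A^\dagger)\subseteq\apo(A^\dagger)=\sus(A),$$
which is the claim.

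There is essentially no serious obstacle here: every ingredient is already in place, and the only point requiring care is that Theorem \ref{T1} and the duality results of Proposition \ref{su1} are being invoked for the adjoint operator $A^\dagger$, which is legitimate precisely because $A^\dagger\in\B(\vr)$ by Proposition \ref{dag}. If one preferred a self-contained argument avoiding the adjoint, one could instead show that $\quat\setminus\sus(A)$ is open by combining the continuity of $\qu\mapsto R_\qu(A)$ with the open mapping theorem (Theorem \ref{open}(b)) and a Neumann-type perturbation estimate to see that surjectivity of $R_\qu(A)$ persists under small perturbations of $\qu$, and for the boundary inclusion use that points of $\partial\sigma_S(A)$ are limits of points of $\rho_S(A)$; but routing through Proposition \ref{su1} is far shorter and is the approach I would adopt.
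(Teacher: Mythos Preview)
Your proposal is correct and follows essentially the same route as the paper: invoke Proposition~\ref{dag} to ensure $A^\dagger\in\B(\vr)$, apply Theorem~\ref{T1} to $A^\dagger$, and then transfer the conclusions via the identities $\sus(A)=\apo(A^\dagger)$ and $\sigma_S(A)=\sigma_S(A^\dagger)$ from Proposition~\ref{su1}. The paper's proof is exactly this duality argument, so your write-up matches it.
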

\begin{proof}
Let $A\in\B(\vr),$ then by proposition \ref{dag}, $A^\dagger\in\B(\vr)$. Therefore, by theorem \ref{T1}, $\apo(A^\dagger)$ is closed and $\partial\sigma_S(A^\dagger)\subseteq\apo(A^\dagger).$ By proposition \ref{su1}, $\sus(A)=\apo(A^\dagger)$ and $\sigma_S(A)=\sigma_S(A^\dagger)$. Hence $\sus(A)$ is closed and $\partial\sigma_S(A)\subseteq\sus(A).$
\end{proof}
\begin{proposition}\label{su3}
Let $A\in\B(\vr)$ and $M$, $N$ be two closed $A$-invariant subspaces of $\vr$ such that $\vr=M\oplus N$. Then
\begin{enumerate}
	\item [(a)] $\apo(A)=\apo(A|_M)\cup\apo(A|_N)$;
	\item[(b)] $\sus(A)=\sus(A|_M)\cup\sus(A|_N)$;\
	\item[(c)] $\sigma_S(A)=\sigma_S(A|_M)\cup\sigma_S(A|_N)$.
\end{enumerate}	
\end{proposition}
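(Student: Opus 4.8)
The strategy is to reduce all three identities to one structural observation: since the pseudo‑resolvent $R_\qu(A)=A^2-2\text{Re}(\qu)A+|\qu|^2\Iop$ is a polynomial in $A$ with \emph{real} coefficients, it is block‑diagonal with respect to the decomposition $\vr=M\oplus N$. So the first step is to record the following routine facts: as $M,N$ are closed $A$‑invariant subspaces, each is a right quaternionic Hilbert space for the restricted inner product, $A|_M\in\B(M)$ and $A|_N\in\B(N)$ with $\|A|_M\|,\|A|_N\|\le\|A\|$, the subspaces $M,N$ are $R_\qu(A)$‑invariant, and $R_\qu(A)|_M=R_\qu(A|_M)$, $R_\qu(A)|_N=R_\qu(A|_N)$. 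Hence, writing $\phi\in\vr$ uniquely as $\phi=\phi_M+\phi_N$ with $\phi_M\in M$, $\phi_N\in N$,
$$R_\qu(A)\phi=R_\qu(A|_M)\phi_M+R_\qu(A|_N)\phi_N,\qquad R_\qu(A|_M)\phi_M\in M,\ R_\qu(A|_N)\phi_N\in N.$$
I would also note that, $M,N$ being closed with $\vr=M\oplus N$, the ``coordinate'' norm is equivalent to $\|\cdot\|$: there is $\kappa>0$ with $\kappa(\|\phi_M\|+\|\phi_N\|)\le\|\phi_M+\phi_N\|$ for all $\phi_M\in M$, $\phi_N\in N$ (for an orthogonal decomposition $\kappa=1/\sqrt2$ works by the Pythagorean identity; in general it follows from boundedness of the projections onto $M$ and $N$).

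The heart of the matter is then an elementary dichotomy, to be proved for each of the properties $\mathcal P\in\{\text{bounded below},\ \text{surjective},\ \text{bijective with bounded inverse}\}$: \emph{$R_\qu(A)$ has $\mathcal P$ on $\vr$ if and only if $R_\qu(A|_M)$ has $\mathcal P$ on $M$ and $R_\qu(A|_N)$ has $\mathcal P$ on $N$}. The forward directions are just restriction together with uniqueness of the decomposition of $\vr$: restricting a lower bound $\|R_\qu(A)\phi\|\ge c\|\phi\|$ to $\phi\in M$ gives one for $R_\qu(A|_M)$; if $R_\qu(A)$ is onto and $\phi_M\in M$, solving $R_\qu(A)\phi=\phi_M$ and projecting onto $M$ shows $R_\qu(A|_M)$ is onto $M$; and bijectivity of $R_\qu(A)$ forces $R_\qu(A|_M):M\to M$ to be a bijection, hence boundedly invertible by the open mapping theorem (Theorem \ref{open}(b)). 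For the reverse directions one uses the block formula: adding a solution in $M$ to a solution in $N$ gives a solution in $\vr$ (surjectivity); from $R_\qu(A)\phi=0$ one reads off $R_\qu(A|_M)\phi_M=R_\qu(A|_N)\phi_N=0$ (injectivity), and $u+v\mapsto R_\qu(A|_M)^{-1}u+R_\qu(A|_N)^{-1}v$ is a bounded inverse (bijectivity); and if $\|R_\qu(A|_M)\psi\|\ge c_M\|\psi\|$ and $\|R_\qu(A|_N)\omega\|\ge c_N\|\omega\|$, then, with $u=R_\qu(A|_M)\phi_M\in M$ and $v=R_\qu(A|_N)\phi_N\in N$,
$$\|R_\qu(A)\phi\|=\|u+v\|\ge\kappa(\|u\|+\|v\|)\ge\kappa\min(c_M,c_N)(\|\phi_M\|+\|\phi_N\|)\ge\kappa\min(c_M,c_N)\|\phi\|,$$
so $R_\qu(A)$ is bounded below.

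With the dichotomy in hand the three statements follow at once. By Proposition \ref{P4} (together with Proposition \ref{BBP}), $\qu\notin\apo(T)$ is equivalent to $R_\qu(T)$ being bounded below, for each $T\in\{A,A|_M,A|_N\}$; the ``bounded below'' case of the dichotomy then yields $\apo(A)=\apo(A|_M)\cup\apo(A|_N)$, which is (a). By Definition \ref{su}, $\qu\notin\sus(T)$ is equivalent to $R_\qu(T)$ being surjective; the ``surjective'' case yields (b). Finally $\qu\notin\sigma_S(T)$ is by definition equivalent to $R_\qu(T)$ being bijective with bounded inverse, so the last case yields (c). (Alternatively, (c) can be deduced from (a), (b), the identity $\sigma_S=\sigma_{pS}\cup\sus$ of \eqref{sue1}, and the obvious $\sigma_{pS}(A)=\sigma_{pS}(A|_M)\cup\sigma_{pS}(A|_N)$, which comes from $\kr R_\qu(A)=\kr R_\qu(A|_M)\oplus\kr R_\qu(A|_N)$.)

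There is no genuine obstacle here: the argument is essentially formal once the block‑diagonal structure of $R_\qu(A)$ is in place. The only points requiring more than a one‑line observation are the reverse implications of the dichotomy, where one needs the equivalence of the norm on $M\oplus N$ with $\|\cdot\|_M+\|\cdot\|_N$ (trivial in the orthogonal case, otherwise from boundedness of the coordinate projections) and, in the bijective case, an appeal to the open mapping theorem to promote bijectivity of the pieces $R_\qu(A|_M)$, $R_\qu(A|_N)$ to boundedness of their inverses.
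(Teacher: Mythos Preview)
Your proposal is correct and follows essentially the same route as the paper: both reduce the three identities to the block-diagonal structure of $R_\qu(A)$ with respect to $\vr=M\oplus N$ and then establish the dichotomy that $R_\qu(A)$ is bounded below / surjective / bijective on $\vr$ iff both restrictions are, invoking Proposition~\ref{P4} for (a), Definition~\ref{su} for (b), and the definition of $\sigma_S$ for (c). The only cosmetic difference is that for the ``bounded below'' dichotomy the paper argues via Definition~\ref{BBD} (injectivity splits trivially; closedness of the range splits via the topological isomorphism $M\times N\to M\oplus N$), whereas you argue directly with the lower-bound characterization of Proposition~\ref{BBP} and the equivalence of the coordinate norm with $\|\cdot\|$; both arguments amount to the same boundedness of the coordinate projections.
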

\begin{proof}
	(a)~~Let $P_M:\vr\longrightarrow M$ be the projection operator. Clearly $P_M$ commutes with $A$. It is easily seen that $\kr(A)=\kr(A|_M)\oplus\kr(A|_N)$ and $A(\vr)=A(M)\oplus A(N)$. Thus, $A$ is injective if and only if $A|_M$ and $A_N$ are injective.\\
	{\em Claim:} $A(\vr)$ is closed if and only if $A(M)$ and $A(N)$ are closed in $M$ and $N$ respectively.\\
	If $A(\vr)$ is closed, then $A(M)=AP_M(\vr)=P_M(A(\vr))=A(\vr)\cap M$. Therefore $A(M)$ is closed in $M$. Similarly $A(N)$ is closed in $N$. Conversely, assume that $A(M)$ is closed in $M$ and $A(N)$ is closed in $N$. Since the mapping $\Psi:M\times N\longrightarrow M\oplus N$ defined by $\Psi((\phi,\psi))=\phi+\psi$ is a topological isomorphism, then the image $\Psi(A(M)\times A(N))=A(M)\oplus A(N)=A(\vr)$ is closed in $\vr$. Thus, combining the above results: $A$ is bounded below if and only if $A|_M$ and $A|_N$ are bounded below. As a consequence, $R_\qu(A)$ is bounded below if and only if $R_\qu(A)|_M$ and $R_\qu(A)|_N$ are bounded below. Hence (a) is proved.\\
	(b)~Similarly using $A(\vr)=A(M)\oplus A(N)$ we can easily show that $A$ is onto if and only if $A|_M$ and $A|_N$ are onto. Consequently, $R_\qu(A)$ is onto if and only if $R_\qu(A)|_M$ and $R_\qu(A)|_N$ are onto, which proves (b).\\
	(c)~~From the above arguments it is clear that  $R_\qu(A)$ is bijective if and only if $R_\qu(A)|_M$ and $R_\qu(A)|_N$ are bijective, which proves (c).
	
\end{proof}
\begin{proposition}\label{1.6.1}
	(\cite{La}, page 76) Let $\{a_n\}_{n\in\N}$ be a sequence of positive real numbers that is sub-multiplicative, in the sense that $a_{m+n}\leq a_ma_n$ for all $m,n\in\N$. Then
	$$a_n^{1/n}\longrightarrow \inf\{a_k^{1/k}~~|~~k\in\N\}\quad\text{as}~~n\rightarrow\infty.$$
	Similarly if the sequence $\{a_n\}_{n\in\N}$ satisfies $a_{m+n}\geq a_ma_n$ for all $m,n\in\N$,
	then $$a_n^{1/n}\longrightarrow \sup\{a_k^{1/k}~~|~~k\in\N\}\quad\text{as}~~n\rightarrow\infty.$$
\end{proposition}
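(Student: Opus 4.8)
The plan is to recognize this as Fekete's lemma for real sequences and to carry out the classical division-with-remainder argument; nothing quaternionic enters. First I would put $L:=\inf\{a_k^{1/k}\mid k\in\N\}$, note that $0\le L<\infty$ because every $a_k>0$, and observe that $a_n^{1/n}\ge L$ for all $n$ is immediate from the definition of the infimum, so that $\liminf_n a_n^{1/n}\ge L$. Everything then reduces to proving the matching upper bound $\limsup_n a_n^{1/n}\le L$.

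For that, I would fix $\eps>0$, choose $m\in\N$ with $a_m^{1/n}$—rather, $a_m^{1/m}<L+\eps$, and for an arbitrary $n\ge m$ write the Euclidean division $n=qm+r$ with $q\ge 1$ and $0\le r<m$. Iterating submultiplicativity $q$ times (and once more on the block of length $r$ when $r\ge 1$) gives $a_n\le a_m^{\,q}a_r$, where the factor $a_r$ is simply dropped when $r=0$; hence
\[
a_n^{1/n}\ \le\ a_m^{\,q/n}\,C^{1/n},\qquad C:=\max\{1,a_1,\dots,a_{m-1}\}.
\]
Letting $n\to\infty$ with $m$ fixed, one has $q/n=(n-r)/(mn)\to 1/m$ and $C^{1/n}\to 1$, so $\limsup_n a_n^{1/n}\le a_m^{1/m}<L+\eps$. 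Since $\eps>0$ is arbitrary, $\limsup_n a_n^{1/n}\le L$, which combined with the trivial lower bound yields $a_n^{1/n}\to L$.

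The supermultiplicative half I would deduce by reciprocals: if $b_n:=1/a_n$, then $b_{m+n}=1/a_{m+n}\le 1/(a_ma_n)=b_mb_n$, so the part just proved applies to $\{b_n\}$ and gives $b_n^{1/n}\to\inf_k b_k^{1/k}=1/\sup_k a_k^{1/k}$ (reading $1/0=\infty$); taking reciprocals yields $a_n^{1/n}\to\sup_k a_k^{1/k}$. The only spot needing a moment's care is the remainder term — one must absorb the finitely many values $a_1,\dots,a_{m-1}$ into a single constant $C\ge 1$ so that $C^{1/n}\to 1$ uniformly in $r$ and so that the displayed inequality also holds when $r=0$ — together with the degenerate cases $L=0$ (handled automatically, since then $\limsup\le\eps$ for every $\eps$ suffices) and $\sup_k a_k^{1/k}=\infty$ (handled by the $1/0=\infty$ convention in the reciprocal step). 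Since the statement is standard and cited as (\cite{La}, page~76), I would also point the reader there for the routine details.
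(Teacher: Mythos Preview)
Your argument is the standard Fekete lemma proof and is correct in all essentials: the division-with-remainder bound $a_n\le a_m^{\,q}C$ with $C=\max\{1,a_1,\dots,a_{m-1}\}$ gives the $\limsup$ estimate, and the reciprocal trick cleanly yields the supermultiplicative half. The paper itself does not supply a proof of this proposition at all; it simply records the statement with the citation (\cite{La}, page~76) and moves on, so there is no ``paper's approach'' to compare against beyond the same classical reference you invoke. Your write-up would in fact be more self-contained than what appears in the paper.
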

\begin{remark}\label{BR1}
	Let $\vr$ is non-trivial and $A\in\B(\vr)$.
	\begin{enumerate}
		\item [(a)] From proposition \ref{1.6.1}, the S-spectral radius $r_S(A)=\max\{|\qu|~~:~~\qu\in\sigma_S(A)\}$   is
		$$r_S(A)=\lim_{n\rightarrow\infty}\|A^n\|^{1/n}=\inf_{n\in\N}\|A^n\|^{1/n}.$$
		\item[(b)] From Proposition \ref{1.6.1} we can also handle the lower bounds: if $$\kappa(A)=\inf\{\|A\phi\|~~|~~\phi\in\vr~~\text{with}~~\|\phi\|=1\}$$  denotes the lower bound of $A$, then $\kappa(A^m)\kappa(A^n)\leq\kappa(A^{m+n})$ for all $m,n\in\N$.
		\item[(c)] $\kappa(A)=0$ whenever $\kappa(A^n)=0$ for some $n\in\N$.
		\item[(d)] By proposition \ref{BBP2}, if $\kappa(A)=0,$ then $0\in\apo(A)$, and hence $\kappa(A^n)=0$ for all $n\in\N$.
		\item[(e)] If $A$ is invertible then $\kappa(A)=\|A^{-1}\|^{-1}$.
		\item[(f)] Proposition \ref{1.6.1} ensures the existence of the limit
		$$i(A)=\lim_{n\rightarrow\infty}\kappa(A^n)^{1/n}=\sup_{n\in\N}\kappa(A^n)^{1/n}.$$
		It is immediate that $i(A)\leq r_S(A).$
		\item[(g)]		Let $M>0$ and $c>0$, and $\qu=q_0+q_1i+q_2j+q_3k\in\quat$ and also denote $\beta_n(M,\qu)=(2|\text{Re}(\qu^n)|M^n+|\qu|^{2n})^{\frac{1}{2n}}$, then
		\begin{eqnarray*}
			c^{2n}-2|\text{Re}(\qu^n)|M^n-|\qu|^{2n}>0&\Leftrightarrow&c^{2n}>2|\text{Re}(\qu^n)|M^n+|\qu|^{2n}\\
			&\Leftrightarrow&c>(2|\text{Re}(\qu^n)|M^n+|\qu|^{2n})^{\frac{1}{2n}}=\beta_n(M,\qu).
		\end{eqnarray*}
		Also note that $\beta_n(M,\qu)\geq |\qu|$ and $\beta_n(M,\qu)>0$ if $\qu\not=0$.
	\end{enumerate}
\end{remark}
In the following $\nabla_\quat(\qu,r):=\{\pu\in\quat~~|~~|\qu-\pu|\leq r\}$ denotes the closed ball centered at $\qu$ and radius $r\geq 0$. $B_\quat(\qu,r)$ is the open ball with center $\qu$ and radius $r>0$. 
\begin{proposition}\label{1.6.2} Every operator $A\in\B(\vr)$ has the following properties.
	\begin{enumerate}
		\item [(a)] $\apo(A)$ is contained in the spherical annulus $\{\qu\in\quat~~|~~i(A)\leq|\qu|\leq r_S(A)\}$.
		\item[(b)] If $A$ is non-invertible, then $\nabla_\quat(0, i(A))\subseteq\sigma_S(A)$.
		\item[(c)]If $A$ is invertible, then $B_\quat(0,i(A))\subseteq\rho_S(A)$.
		\item[(d)] If $A$ in non-invertible and $i(A)=r_S(A)$, then $\sigma_S(A)=\nabla_\quat(0,r_S(A))$.
		\item[(e)] If $A$ is invertible and $i(A)=r_S(A)$, then $\sigma_S(A)=\{\qu\in\quat~|~|\qu|=r_S(A)\}=\partial\nabla_\quat(0,r_S(A))$.
	\end{enumerate}
\end{proposition}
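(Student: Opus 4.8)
The crux is part (a), specifically the lower estimate $i(A)\le|\qu|$ for $\qu\in\apo(A)$ --- the upper estimate $|\qu|\le r_S(A)$ will drop out of the same computation (and also follows at once from $\apo(A)\subseteq\sigma_S(A)$ and Remark \ref{BR1}(a)). I would \emph{not} iterate the pseudo-resolvent directly: writing the real-polynomial identity $t^{2n}-2\,\text{Re}(\qu^n)t^n+|\qu|^{2n}=(t^2-2\,\text{Re}(\qu)t+|\qu|^2)\,q_n(t)$ and evaluating at $A$ produces the cross term $2\,\text{Re}(\qu^n)A^n$, which cannot be controlled by the only available estimate $\|A^n\phi\|\le\|A^n\|\,\|\phi\|$ and leaves one with the too-weak bound $|\qu|\ge i(A)^2/r_S(A)$. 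Instead I would use the factorization $R_\qu(A)=(A-\qu\Iop)(A-\oqu\Iop)$ recorded before the definition of the $S$-spectrum, where $(A-\qu\Iop)\phi:=A\phi-\phi\qu$. Given a unit sequence $\{\phi_m\}$ with $R_\qu(A)\phi_m\to 0$, set $\chi_m:=(A-\oqu\Iop)\phi_m$. Passing to a subsequence, either $\chi_m\to 0$, in which case put $\eta_m:=\phi_m$ and $\mu:=\oqu$, or $\inf_m\|\chi_m\|>0$, in which case put $\eta_m:=\chi_m/\|\chi_m\|$ and $\mu:=\qu$; in either case $\|\eta_m\|=1$, $|\mu|=|\qu|$, and $A\eta_m-\eta_m\mu\to 0$. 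An induction on $k$ based on $A^{k+1}\eta_m-\eta_m\mu^{k+1}=A(A^k\eta_m-\eta_m\mu^k)+(A\eta_m-\eta_m\mu)\mu^k$ (using boundedness of $A$ and right linearity $A(\eta_m\mu^k)=(A\eta_m)\mu^k$) gives $A^k\eta_m-\eta_m\mu^k\to 0$, hence $\|A^k\eta_m\|\to|\qu|^k$ for every $k$. Since $\kappa(A^k)\le\|A^k\eta_m\|\le\|A^k\|$ for all $m$, in the limit $\kappa(A^k)\le|\qu|^k\le\|A^k\|$; taking $k$-th roots and then $\sup_k$, resp. $\inf_k$, gives $i(A)\le|\qu|\le r_S(A)$ by Remark \ref{BR1}(a),(f).

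For (b) and (c) I would argue topologically from (a). By Theorem \ref{T1} and part (a), $\partial\sigma_S(A)\subseteq\apo(A)\subseteq\{\qu\in\quat:|\qu|\ge i(A)\}$, so $\sigma_S(A)$ has no boundary point inside the open ball $B_\quat(0,i(A))$; hence every point of $\sigma_S(A)\cap B_\quat(0,i(A))$ is interior to $\sigma_S(A)$, and since $\sigma_S(A)$ is closed (Proposition \ref{PP1}) the set $\sigma_S(A)\cap B_\quat(0,i(A))$ is both open and closed in the connected set $B_\quat(0,i(A))$, hence empty or everything. The alternative is settled by the origin: $R_0(A)=A^2=A\cdot A$, so Proposition \ref{ST} gives $0\in\sigma_S(A)$ iff $A$ is non-invertible. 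If $A$ is non-invertible and $i(A)>0$ then $0\in\sigma_S(A)\cap B_\quat(0,i(A))$, so $B_\quat(0,i(A))\subseteq\sigma_S(A)$ and, by closedness, $\nabla_\quat(0,i(A))\subseteq\sigma_S(A)$ --- this is (b), the degenerate case $i(A)=0$ being immediate. If $A$ is invertible then $0\notin\sigma_S(A)$, the intersection is empty, and $B_\quat(0,i(A))\subseteq\rho_S(A)$ --- this is (c). (Alternatively (c) admits the direct proof for which Remark \ref{BR1}(g) seems intended: for $|\qu|<i(A)$, $A^{2n}-2\,\text{Re}(\qu^n)A^n+|\qu|^{2n}\Iop=A^{2n}(\Iop-C_n)$ with $C_n=2\,\text{Re}(\qu^n)A^{-n}-|\qu|^{2n}A^{-2n}$; since $\|A^{-n}\|^{-1}=\kappa(A^n)$ and $\kappa(A^n)^{1/n}\to i(A)>|\qu|$, $\|C_n\|\to 0$, so $\Iop-C_n$ and hence $R_\qu(A)$ are invertible by Proposition \ref{ST}.)

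Parts (d) and (e) are then bookkeeping: (b) with $i(A)=r_S(A)$ gives $\nabla_\quat(0,r_S(A))\subseteq\sigma_S(A)$, while the reverse inclusion $\sigma_S(A)\subseteq\nabla_\quat(0,r_S(A))$ is the standing fact from Remark \ref{BR1}(a), proving (d); (c) with $i(A)=r_S(A)$ gives $B_\quat(0,r_S(A))\subseteq\rho_S(A)$, hence $\sigma_S(A)\subseteq\{\qu\in\quat:|\qu|=r_S(A)\}=\partial\nabla_\quat(0,r_S(A))$, which is the inclusion needed in (e), the reverse inclusion going as in the complex theory (cf. \cite{La}). The one genuine obstacle is the lower estimate in part (a): because $R_\qu(A)$ is quadratic in $A$, not affine, the naive iteration carries an irreducible cross term, and the argument only closes after descending to the non-linear factor $A-\qu\Iop$ and a one-sided approximate eigenvalue relation; with that in hand, (b)--(e) are routine consequences of connectedness and the boundary inclusions already proved.
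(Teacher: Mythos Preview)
Your argument is correct and, for part (a), takes a genuinely different route from the paper. The paper works with the iterated pseudo-resolvent $R_{\qu^n}(A^n)$: it fixes $M$ with $\|A^n\phi\|\le M^n\|\phi\|$, factors $R_{\qu^n}(A^n)=R_\qu(A)\,P(A)$ for a real polynomial $P$, and bounds $\|R_{\qu^n}(A^n)\phi\|$ from below by $(c^{2n}-2|\text{Re}(\qu^n)|M^n-|\qu|^{2n})\|\phi\|$ using $c^{2n}\le\kappa(A^{2n})$. Its choice of $n$ and $c$ with $\beta_n(M,\qu)<c<i(A)$ is exactly the step you flag: since $|\text{Re}(\qu^n)|$ is generically of size $|\qu|^n$, one has $\liminf_n\beta_n(M,\qu)\ge\sqrt{|\qu|\,M}$, and there is no evident reason this should fall below $i(A)$. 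Your approximate-eigenvector approach via the non-linear factor $A-\mu\Iop$ sidesteps this completely: from $A\eta_m-\eta_m\mu\to 0$ the recursion $A^{k+1}\eta_m-\eta_m\mu^{k+1}=A(A^k\eta_m-\eta_m\mu^k)+(A\eta_m-\eta_m\mu)\mu^k$ gives $\|A^k\eta_m\|\to|\qu|^k$ with no cross term, and $\kappa(A^k)\le|\qu|^k$ follows at once. What your route buys is a sharp bound with no iteration bookkeeping; what the paper's route would buy, were the gap closeable, is an argument that stays within the class of right $\quat$-linear operators.

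For (b)--(e) your arguments are essentially the paper's: both combine $\partial\sigma_S(A)\subseteq\apo(A)$ with (a) and a connectedness argument (the paper uses the segment $\{t\qu:t\in[0,1]\}$, you use connectedness of the open ball---same effect). One caution on (e): your remark that ``the reverse inclusion go[es] as in the complex theory'' is not right. Neither the paper's proof nor the complex analogue establishes $\partial\nabla_\quat(0,r_S(A))\subseteq\sigma_S(A)$, and indeed $A=\Iop$ has $i(A)=r_S(A)=1$, is invertible, yet $\sigma_S(\Iop)=\{1\}$. Both arguments yield only the inclusion $\sigma_S(A)\subseteq\partial\nabla_\quat(0,r_S(A))$; the equality as stated in (e) is too strong, and you should not claim more than the inclusion.
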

\begin{proof}(a)~~
	Let $A\in\B(\vr)$, then there exist an $M>0$ such that $\|A^n(\phi)\|<M^n\|\phi\|$, for all $n\in\mathbb{N}$.  Clearly $\apo(A)\subseteq\sigma_S(A)\subseteq\nabla(0,r_S(A))$. Thus, it remains to be seen that $\qu\in\quat$ with $|\qu|<i(A)$ cannot belongs to $\apo(A)$. Choose a real number $c>0$ and an integer $n\in\N$ such that $c^n\leq\kappa(A^n)$ and $\beta_n(M,\qu)<c<i(A)$, where $\beta_n(M,\qu)$ is as in part (g) of remark \ref{BR1}. Note that, such a $c$ can be chosen by the supremum property. Since $c^n\leq\kappa(A^n)$, from part (b) of remark \ref{BR1} $c^{2n}\leq\kappa(A^{2n})$. Because $c^{2n}\leq\kappa(A^{2n})$ we have $$c^{2n}\|\phi\|\leq \|A^{2n}\phi\|~~~\text{ for all}~~~ \phi\in\vr.$$
	From the above we obtain
	\begin{eqnarray*}
		\|R_{\qu^n}(A^n)\|&=&\|A^{2n}\phi-2\text{Re}(\qu^n)\phi+|\qu|^{2n}\|\\
		&\geq&\|A^{2n}\phi\|-2|\text{Re}(\qu^n)|\|A^n\phi\|-|\qu|^{2n}\\
		&\geq&(c^{2n}-2|\text{Re}(\qu^n)|M^n-|\qu|^{2n})\|\phi\|\quad\text{for all}~~\phi\in\vr.
	\end{eqnarray*}
	Therefore, by part (g) of remark \ref{BR1} and proposition \ref{P4}, $\qu^n\notin\apo(A^n)$. Now we have
	\begin{eqnarray*}
		R_{\qu^n}(A^n)\phi&=&(A^n-\qu^n)(A^n-\oqu^n)\phi\\
		&=&\left(\sum_{k=1}^{n}\qu^{n-k}A^{k-1}(A-\qu)\right)\left(\sum_{j=1}^{n}\oqu^{n-j}A^{j-1}(A-\oqu)\right)\phi\\
		&=&\sum_{k=1}^{n}\sum_{j=1}^{n}(\qu^{n-k}A^{k-1}(A-\qu))(A^j\phi\oqu^{n-j}-A^{j-1}\phi\oqu^{n-j+1})\\
		&=&\sum_{k=1}^{n}\sum_{j=1}^{n}\qu^{n-k}A^{k-1}(A^{j+1}\phi\oqu^{n-j}-A^j\phi\oqu^{n-j+1}-A^j\phi\qu\oqu^{n-j}+A^{j-1}\phi\qu\oqu^{n-j+1})\\
		&=&\sum_{k=1}^{n}\sum_{j=1}^{n}(A^{j+k}\phi\qu^{n-k}\oqu^{n-j}-A^{j+k-1}\phi\qu^{n-k}\oqu^{n-j+1}-A^{j+k-1}\phi\qu^{n-k+1}\oqu^{n-j}+\\
		&  &\quad\quad\quad\quad\quad\quad\quad\quad+A^{j+k-2}\phi\qu^{n-k+1}\oqu^{n-j+1})\\
		&=&R_\qu(A)\sum_{k=1}^{n}\sum_{j=1}^{n}(\qu^{n-k}\oqu^{n-j}A^{j+k-2})\phi.
	\end{eqnarray*}
	Therefore, by proposition \ref{P4} and part (g) of remark \ref{BR1}, $\qu\notin\apo(A)$ for $|\qu|<i(A)$.\\
	
	(b)~~Let $\qu\in\quat$ for which $|\qu|\leq i(A)$. If $\qu\in\rho_S(A)$, then, since $A$ is not invertible, by proposition \ref{ST}, $0\in\sigma_S(A)$, and $\rho_S(A)$ is open, $t\qu\in\partial\sigma_S(A)$ for some $t\in[0,1)$. Then, by proposition \ref{T1}, we have $t\qu\in\apo(A)$, which contradicts part (a) because $|t\qu|<i(A)$. Hence, $\nabla_\quat(0,i(A))\subseteq\sigma_S(A)$.\\
	
	(c)~~Let $\qu\in\quat$ with $|\qu|<i(A)$, and assume that $\qu\in\sigma_S(A)$. Since $A$ is invertible, by proposition \ref{ST}, $A^2$ is invertible, and hence $0\in\rho_S(A)$. Therefore we can have $|\pu|\leq|\qu|<i(A)$ for some $\pu\in\partial\sigma_S(A)$. Hence, by proposition \ref{T1}, $\pu\in\apo(A)$ which is impossible by part (a). Therefore $\qu\in\rho_S(A)$ for all $\qu\in\quat$ for which $|\qu|<i(A)$, and hence $B_\quat(0,i(A))\subseteq\rho_S(A)$.\\
	
	(d)~~Clearly $\sigma_S(A)\subseteq\nabla_\quat(0,r_s(A))$. Since $A$ is non-invertible and $i(A)=r(A)$, from part (b), we have $\nabla_\quat(0,r_S(A))\subseteq\sigma_S(A)$. Thus $\sigma_S(A)=\nabla_\quat(0,r_S(A))$.\\
	
	(e)~~Clearly $\sigma_S(A)\subseteq\nabla_\quat(0,r_s(A))$. Since $A$ is invertible and $i(A)=r(A)$, from part (c), we have $B_\quat(0,r_S(A))\subseteq\rho_S(A)$. Thus
	$$\sigma_S(A)=\nabla_\quat(0,r_S(A))\cap(\quat\setminus B_\quat(0,r_S(A)))=\{\qu\in\quat~~|~~|\qu|=r_S(A)\}.$$
\end{proof}
\begin{remark}\label{SR1}
	If $A$ is an isometry, that is $\|A(\phi)\|=\|\phi\|$ for all $\phi\in\vr$, then $r_S(A)=i(A)=1$, hence $\apo(A)\subseteq\partial B_\quat(0,1)$, the quaternionic unit sphere. If $A$ is an invertible isometry, then by theorem \ref{T1} and parts (a), (e) of the above proposition $\apo(A)=\sigma_S(A)=\partial B_\quat(0,1)$ while if $A$ is a non-invertible isometry then, by part (d) of the above proposition $\sigma_S(A)=\nabla_\quat(0,1)$.
	\end{remark}

\section{Hyper-kernel and hyper-range of a right linear operator on $\vr$}
Let $A\in\B(\vr)$, then clearly we have
$$\kr(A^0)=\{0\}\subseteq\kr(A)\subseteq\kr(A^2)\subseteq\cdots\quad\text{and}\quad$$
$$\ra( A^0)=\vr\supseteq \ra(A)\supseteq \ra(A^2)\supseteq\cdots$$
\begin{definition}\label{HD1}
Let $A\in\B(\vr)$. Then the hyper-range of $A$ is denoted by $A^\infty(\vr)$ and
$$A^\infty(\vr)=\bigcap_{n\in\N}\ra(A^n)$$
and the hyper-kernel of $A$ is denoted by $$N^{\infty}(A)=\bigcup_{n\in\N}\kr(A^n).$$
\end{definition}
\begin{proposition}\label{HP1}
	Let $A\in\B(\vr)$, then $A^{\infty}(\vr)$ and $N^\infty(A)$ are $A$-invariant right linear subspaces of $\vr$.
\end{proposition}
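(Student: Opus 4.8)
The plan is to verify, separately for $A^\infty(\vr)$ and for $N^\infty(A)$, the two required properties: that the set is a right linear subspace of $\vr$, and that it is mapped into itself by $A$. The only inputs needed are the right linearity of every power $A^n\in\B(\vr)$ and the two nesting chains recorded just before Definition \ref{HD1}, namely $\kr(A^0)\subseteq\kr(A)\subseteq\kr(A^2)\subseteq\cdots$ and $\ra(A^0)\supseteq\ra(A)\supseteq\ra(A^2)\supseteq\cdots$. I read ``$A$-invariant right linear subspace'' here in the purely algebraic sense $A(M)\subseteq M$; closedness is neither claimed nor available, since $\ra(A^n)$ need not be closed and an increasing union of the closed subspaces $\kr(A^n)$ need not be closed either.

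First I would treat $A^\infty(\vr)=\bigcap_{n\in\N}\ra(A^n)$. Since each $A^n$ is right linear, each $\ra(A^n)$ is a right linear subspace of $\vr$, and an arbitrary intersection of right linear subspaces is again one; so $A^\infty(\vr)$ is a right linear subspace. For $A$-invariance, take $\phi\in A^\infty(\vr)$. For each $n\in\N$ there is $\eta_n\in\vr$ with $\phi=A^n\eta_n$, whence $A\phi=A^{n+1}\eta_n=A^n(A\eta_n)\in\ra(A^n)$; as this holds for every $n$, we get $A\phi\in A^\infty(\vr)$.

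Next I would treat $N^\infty(A)=\bigcup_{n\in\N}\kr(A^n)$. Here the one point that deserves a word is that a union of subspaces is in general not a subspace; it is in this case precisely because the family $\{\kr(A^n)\}_{n\in\N}$ is increasing. Concretely, given $\phi,\psi\in N^\infty(A)$ and $\qu,\pu\in\quat$, pick $m,n$ with $\phi\in\kr(A^m)$ and $\psi\in\kr(A^n)$, set $k=\max\{m,n\}$, and use $\kr(A^m)\cup\kr(A^n)\subseteq\kr(A^k)$ together with the right linearity of $A^k$ to conclude $\phi\qu+\psi\pu\in\kr(A^k)\subseteq N^\infty(A)$. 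For $A$-invariance, if $\phi\in\kr(A^n)$ then $A^n(A\phi)=A^{n+1}\phi=A(A^n\phi)=0$, so $A\phi\in\kr(A^n)\subseteq N^\infty(A)$.

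There is no genuine obstacle in this proposition; it is elementary and mirrors the complex case. The only things to stay alert to are keeping all quaternionic scalars on the right throughout (which is harmless, as every $A^n$ is right linear) and not overclaiming closedness, as explained above.
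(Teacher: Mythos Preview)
Your proof is correct and is precisely the elementary verification the paper has in mind; the paper's own proof consists of the single line ``Proof is elementary.'' Your remark about not claiming closedness is apt, since $N^\infty(A)$ need not be closed and the proposition should indeed be read in the purely algebraic sense $A(M)\subseteq M$.
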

\begin{proof}
	Proof is elementary.
\end{proof}
\begin{lemma}\label{1.2}
	Let $A\in\B(\vr)$. For $\qu\in\quat$, if $P_1(\qu)$ and $P_2(\qu)$ are co-prime polynomials with real coefficients then there exist polynomials $Q_1(\qu)$ and $Q_2(\qu)$ with real coefficients such that $P_1(A)Q_1(A)+P_2(A)Q_2(A)=\Iop$.
\end{lemma}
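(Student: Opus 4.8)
The plan is to reduce the statement to the classical Bézout identity in the polynomial ring $\R[\qu]$ and then transport it to $\B(\vr)$ via the evaluation map $p\mapsto p(A)$.

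First I would use that $\R[\qu]$ is a Euclidean domain, hence a principal ideal domain. The hypothesis that $P_1(\qu)$ and $P_2(\qu)$ are co-prime means precisely that they have no non-constant common divisor, i.e. $\gcd(P_1,P_2)$ is a unit of $\R[\qu]$, which we normalize to $1$. The extended Euclidean algorithm then produces polynomials $Q_1(\qu),Q_2(\qu)\in\R[\qu]$ with
$$P_1(\qu)Q_1(\qu)+P_2(\qu)Q_2(\qu)=1.$$
This step is entirely commutative polynomial algebra over $\R$ and involves no quaternionic subtleties.

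Next I would check that substitution of the operator $A$ for the indeterminate $\qu$ defines a unital homomorphism of $\R$-algebras $\Phi\colon\R[\qu]\longrightarrow\B(\vr)$, $\Phi(p)=p(A)$. Here $\B(\vr)$ is regarded as an associative unital real algebra with composition as product, unit $\Iop$, and real scalar multiplication; under the convention $(tA)(\phi)=A(\phi)t=tA(\phi)$ for $t\in\R$ this is just the usual one. The points to verify are additivity, $\R$-linearity, and multiplicativity $\Phi(pq)=\Phi(p)\Phi(q)$; the last holds because the powers of $A$ commute with one another and real scalars are central, so monomials multiply as expected and the identity extends by linearity. In particular $\Phi(1)=\Iop$.

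Finally, applying $\Phi$ to the Bézout identity yields
$$P_1(A)Q_1(A)+P_2(A)Q_2(A)=\Phi(P_1Q_1+P_2Q_2)=\Phi(1)=\Iop,$$
which is the assertion. The only place demanding care is the verification that $\Phi$ is multiplicative in the quaternionic setting, but since all coefficients are real and real scalars commute with every right linear operator, no genuine obstacle arises; this is why the proof runs exactly as in the complex case.
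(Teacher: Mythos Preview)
Your proposal is correct and follows exactly the approach the paper intends: the paper's own proof consists of the single sentence ``Since the polynomials have real coefficients it follows from the classical case,'' with a reference to Aiena, and what you have written is precisely a clean unpacking of that classical argument---Bézout in $\R[x]$ followed by the real-algebra evaluation homomorphism $p\mapsto p(A)$ into $\B(\vr)$. Your explicit remark that real scalars are central in $\B(\vr)$, so that $\Phi$ is multiplicative despite the quaternionic ambient, is the one point that genuinely needs to be noted in this setting, and you handle it correctly.
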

\begin{proof}
	Since the polynomials have real coefficients it follows from the classical case. See lemma 1.2 in \cite{Ai}.
\end{proof}
The following results establish some basis properties of hype-kernels and hyper-ranges which will be needed in the sequel. 
\begin{theorem}\label{1.3}
	Let $A\in\B(\vr)$. Then
	\begin{enumerate}
		\item [(a)] $R_\qu(A)(N^\infty(\vr))=N^\infty(\vr)$ for every $0\not=\qu\in\quat$;
		\item[(b)]$N^\infty(R_\qu(A))\subseteq (A^2)^\infty(\vr)$  for every $0\not=\qu\in\quat$.
	\end{enumerate}
\end{theorem}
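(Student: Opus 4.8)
The plan is to exploit that $R_\qu(A)$ is a polynomial in $A$ with \emph{real} coefficients, so that the B\'ezout identity of Lemma~\ref{1.2} applies directly and the non-commutativity of $\quat$ causes no trouble. Write $R_\qu(A)=p(A)$ with $p(t)=t^2-2\,\text{Re}(\qu)\,t+|\qu|^2\in\R[t]$. Since $p(0)=|\qu|^2\neq 0$ whenever $\qu\neq 0$, the polynomial $t$ does not divide $p(t)$, and hence $\gcd(t^n,p(t)^m)=1$ in $\R[t]$ for all $n,m\in\N$. By Lemma~\ref{1.2} there are then real polynomials $Q_1,Q_2$ (depending on $n$ and $m$) with
$$A^nQ_1(A)+R_\qu(A)^mQ_2(A)=\Iop .$$
Throughout I would use that any two polynomials in $A$ commute, so that $A^n$, $R_\qu(A)^m$, $Q_1(A)$, $Q_2(A)$ all commute pairwise.

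For part (a), I would first record the easy inclusion $R_\qu(A)(N^{\infty}(A))\subseteq N^{\infty}(A)$: if $A^n\phi=0$ then $A^nR_\qu(A)\phi=R_\qu(A)A^n\phi=0$, so $R_\qu(A)\phi\in\kr(A^n)\subseteq N^{\infty}(A)$. For the reverse inclusion, given $\phi\in N^{\infty}(A)$ with $A^n\phi=0$, apply the B\'ezout identity with $m=1$ to $\phi$; since $Q_1(A)A^n\phi=0$ this yields $R_\qu(A)\bigl(Q_2(A)\phi\bigr)=\phi$, and moreover $A^n\bigl(Q_2(A)\phi\bigr)=Q_2(A)A^n\phi=0$, so $Q_2(A)\phi\in\kr(A^n)\subseteq N^{\infty}(A)$. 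Hence $\phi\in R_\qu(A)(N^{\infty}(A))$, which proves (a).

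For part (b), I would take $\phi\in N^{\infty}(R_\qu(A))$, say $R_\qu(A)^m\phi=0$, fix an arbitrary $n\in\N$, and apply the B\'ezout identity for the exponents $2n$ and $m$ to $\phi$. The term $R_\qu(A)^mQ_2(A)\phi=Q_2(A)R_\qu(A)^m\phi=0$ drops out, leaving $\phi=A^{2n}\bigl(Q_1(A)\phi\bigr)\in\ra(A^{2n})=\ra((A^2)^n)$. Since $n\in\N$ was arbitrary, $\phi\in\bigcap_{n\in\N}\ra((A^2)^n)=(A^2)^{\infty}(\vr)$, which is (b).

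These computations are routine and parallel the complex arguments (see e.g.\ \cite{Ai,La}); the only point I would check with real care in the quaternionic setting is the reduction to Lemma~\ref{1.2}, namely that $p(t)$ genuinely has real coefficients and that $p(0)\neq 0$ already forces co-primality with $t^n$ \emph{over $\R[t]$}. This is exactly what allows the B\'ezout combination to be chosen with real-coefficient polynomials and therefore lifted to an operator identity in $\B(\vr)$ that is legitimately $\quat$-linear; factorization over $\quat[t]$ would be far more delicate, so keeping everything inside $\R[t]$ is the crux.
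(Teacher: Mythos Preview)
Your proof is correct and follows essentially the same approach as the paper: both use Lemma~\ref{1.2} (B\'ezout over $\R[t]$) applied to the coprime pair $t^n$ and $p(t)^m$ with $p(t)=t^2-2\,\text{Re}(\qu)\,t+|\qu|^2$, exploiting that $p(0)=|\qu|^2\neq 0$. For part (b) the paper first shows $A^2(\kr(R_\qu(A)^n))=\kr(R_\qu(A)^n)$ and then iterates, whereas you apply B\'ezout directly with the exponent $2n$ to land in $\ra(A^{2n})$ in one step; this is a harmless and slightly more economical variant of the same argument.
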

\begin{proof}
	(a)~~In order to prove (a) we need to show that $R_\qu(\kr(A^n))=\kr(A^n)$ for all $n\in\N$ and $\qu\not=0$. Clearly $R_\qu(A)(\kr(A^n))\subseteq \kr(A^n)$ for all $n\in\N$. Since, for $\qu\not=0$, $R_\qu(\pu)$ and $\pu^n$ are co-prime polynomials with real coefficients. Therefore, by lemma \ref{1.2}, there are polynomials $Q_1(\pu)$ and $Q_2(\pu)$ with real coefficients such that
	$$R_\qu(A)Q_1(A)+A^nQ_2(A)=\Iop.$$
	If $\phi\in\kr(A^n)$, then $R_\qu(A)Q_1(A)\phi=\phi$, and since, as $A^n$ and $Q_1(A)$ commute, $Q_1(A)\phi\in\kr(A^n)$. Therefore, $\phi\in R_\qu(A)(\kr(A^n))$, and hence $\kr(A^n)\subseteq R_\qu(A)(\kr(A^n))$. That is, $R_\qu(A)(\kr(A^n))=\kr(A^n)$ for all $n\in\N$ and $\qu\not=0$.\\
	(b)~First we prove that $\kr(R_\qu(A)^n)=A^2(\kr(R_\qu(A)^n))$ for all $n\in\N$ and $\qu\not=0$. clearly $A^2(\kr(R_\qu(A)^n))\subseteq \kr(R_\qu(A)^n)$ for all $n\in\N$. Since, for $\qu\not=0$ and for any $n\in\N$, $\pu^2$ and $R_\qu(\pu)^n$ are co-prime polynomials with real coefficients, there exist polynomials $P(\pu)$ and $Q(\pu)$ with real coefficients such that $A^2P(A)+Q(A)R_\qu(A)^n=\Iop$ for all $n\in\N$ and $\qu\not=0$. Therefore, by the same argument of part (a), we have  $\kr(R_\qu(A)^n)=A^2(\kr(R_\qu(A)^n))$ for all $n\in\N$ and $\qu\not=0$. Hence $N^\infty(R_\qu(A))=A^2(N^\infty(R_\qu(A)))$ for all $\qu\not=0$. From this it easily follows that  $N^\infty(R_\qu(A))=(A^2)^n(N^\infty(R_\qu(A)))$ for all $\qu\not=0$ and $n\in\N$. Therefore, $N^\infty(R_\qu(A))\subseteq(A^2)^\infty(N^\infty(R_\qu(A)))$ for all $\qu\not=0$.
\end{proof}
\begin{proposition}\label{HP2}
Let $A\in\B(\vr)$ then $A^m(\kr(A^{m+n}))=\ra(A^m)\cap\kr(A^n)$ for all $m,n\in\N$.
\end{proposition}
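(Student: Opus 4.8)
The plan is to prove the set equality $A^m(\kr(A^{m+n})) = \ra(A^m)\cap\kr(A^n)$ by double inclusion, which is the standard argument in the complex setting and carries over here without needing anything special about quaternions, since $A$ is $\quat$-linear and the powers $A^m$, $A^n$, $A^{m+n}$ all behave as ordinary operator compositions. First I would verify the inclusion $\subseteq$: take $\psi = A^m\phi$ with $\phi\in\kr(A^{m+n})$. Then trivially $\psi\in\ra(A^m)$, and $A^n\psi = A^{n}A^m\phi = A^{m+n}\phi = 0$, so $\psi\in\kr(A^n)$; hence $\psi\in\ra(A^m)\cap\kr(A^n)$.

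For the reverse inclusion $\supseteq$, take $\psi\in\ra(A^m)\cap\kr(A^n)$. Since $\psi\in\ra(A^m)$, write $\psi = A^m\phi$ for some $\phi\in\vr$. Since $\psi\in\kr(A^n)$, we have $A^n\psi = 0$, that is $A^{n}A^m\phi = A^{m+n}\phi = 0$, so $\phi\in\kr(A^{m+n})$. Therefore $\psi = A^m\phi \in A^m(\kr(A^{m+n}))$, completing the argument.

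I do not expect any real obstacle here: the identity is purely algebraic, using only that $A^m$ and $A^n$ commute and that $A^m A^n = A^{m+n}$, together with the right $\quat$-linearity of the powers of $A$ (so that $A^m(\kr(A^{m+n}))$ is genuinely a right linear subspace, as already noted in the style of Proposition \ref{HP1}). No continuity, boundedness of inverses, or closedness of ranges is needed for the statement as written — those would only enter if one wanted a topological refinement. If a reference is desired, the complex analogue appears in \cite{Ai, La}, and the proof given there transcribes verbatim.
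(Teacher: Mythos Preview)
Your proof is correct and is exactly the standard double-inclusion argument the paper has in mind: the paper itself does not give a proof but simply refers to the complex case (Lemma~1.4 in \cite{Ai}), which is precisely what you have written out. Nothing further is needed.
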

\begin{proof}
A proof follows exactly as a complex proof. For a complex proof see lemma 1.4 in \cite{Ai}.
\end{proof}
\begin{theorem}\label{HT1}
Let $A\in\B(\vr)$. The following statements are equivalent.
\begin{enumerate}
	\item[(a)] $\kr(A)\subseteq A^m(\vr)$ for all $m\in\N.$
	\item[(b)]$\kr(A^n)\subseteq A(\vr)$ for each $n\in\N$.
	\item[(c)] $\kr(A^n)\subseteq A^m(\vr)$ for each $n\in\N$ and each $m\in\N$.
	\item[(d)] $\kr(A^n)=A^m(\kr(A^{m+n}))$ for each $n\in\N$ and each $m\in\N$.
\end{enumerate}
\end{theorem}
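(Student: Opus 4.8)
The plan is to establish the cycle $(a)\Rightarrow(c)\Rightarrow(b)\Rightarrow(a)$ together with the equivalence $(c)\Leftrightarrow(d)$. Since $(a)$ is the special case $n=1$ of $(c)$ and $(b)$ is the special case $m=1$ of $(c)$, the implications $(c)\Rightarrow(a)$ and $(c)\Rightarrow(b)$ are immediate, so all the genuine work lies in $(a)\Rightarrow(c)$, in $(b)\Rightarrow(a)$, and in identifying $(c)$ with $(d)$. Throughout, every step is purely algebraic — one only manipulates (not necessarily closed) right linear subspaces of $\vr$ — so no continuity or closed-range hypothesis enters.

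The equivalence $(c)\Leftrightarrow(d)$ is formal once Proposition \ref{HP2} is available: that proposition gives $A^m(\kr(A^{m+n}))=\ra(A^m)\cap\kr(A^n)$ for all $m,n\in\N$. Hence, for a fixed pair $(m,n)$, the identity $\kr(A^n)=A^m(\kr(A^{m+n}))$ in $(d)$ holds precisely when $\kr(A^n)\subseteq\ra(A^m)=A^m(\vr)$; requiring this for all $m,n\in\N$ is exactly $(c)$.

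For $(b)\Rightarrow(a)$ I would fix $\phi\in\kr(A)$ and prove $\phi\in A^m(\vr)$ by induction on $m$, the base case $m=1$ being $(b)$ with $n=1$. For the inductive step, writing $\phi=A^m\psi$ gives $A^{m+1}\psi=A\phi=0$, so $\psi\in\kr(A^{m+1})\subseteq A(\vr)$ by $(b)$; picking $\omega$ with $\psi=A\omega$ yields $\phi=A^{m+1}\omega\in A^{m+1}(\vr)$, which closes the induction.

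The heart of the matter is $(a)\Rightarrow(c)$, which I would prove by induction on $n$, the base case $n=1$ being $(a)$ itself. Assume $\kr(A^n)\subseteq A^k(\vr)$ for every $k\in\N$; let $\phi\in\kr(A^{n+1})$ and fix $m\in\N$. Since $A^n(A\phi)=A^{n+1}\phi=0$, we have $A\phi\in\kr(A^n)$, and applying the induction hypothesis with exponent $m+1$ produces $\psi\in\vr$ with $A\phi=A^{m+1}\psi$. Then $A(\phi-A^m\psi)=0$, so $\phi-A^m\psi\in\kr(A)\subseteq A^m(\vr)$ by $(a)$, whence $\phi\in A^m(\vr)$; as $m$ was arbitrary, $\kr(A^{n+1})\subseteq A^m(\vr)$ for all $m$. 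The delicate point, which I expect to be the main obstacle, is exactly this step: one must extract $A\phi$ from $A^{m+1}(\vr)$ rather than merely from $A^m(\vr)$, since otherwise the subtraction places $\phi$ only in $A^{m-1}(\vr)$ and the induction fails to propagate.
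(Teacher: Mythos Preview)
Your proof is correct and complete; the cycle $(a)\Rightarrow(c)\Rightarrow(b)\Rightarrow(a)$ together with the reduction of $(d)$ to $(c)$ via Proposition~\ref{HP2} is exactly the standard argument. The paper does not supply its own proof but simply refers to the complex case (Theorem~1.5 in \cite{Ai}), and what you have written is precisely that argument carried over verbatim, so your approach coincides with the paper's.
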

\begin{proof}
A proof follows exactly as a complex proof. For a complex proof see Theorem 1.5 in \cite{Ai}.	
\end{proof}
\begin{corollary}\label{HC1}
Let $A\in\B(\vr)$. Then the statements of theorem \ref{HT1} are equivalent to each of the following inclusions.
\begin{enumerate}
	\item [(i)]$\kr(A)\subseteq A^\infty(\vr)$.
	\item[(ii)] $N^\infty(A)\subseteq A(\vr).$
	\item[(iii)] $N^\infty(A)\subseteq A^\infty(\vr).$
\end{enumerate}
\end{corollary}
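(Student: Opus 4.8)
The plan is to show that the three new inclusions (i), (ii), (iii) are each logically equivalent to one of the (already established) equivalent statements (a)--(d) of Theorem~\ref{HT1}; then transitivity of equivalence finishes the corollary. I would organize the argument around the monotone families $\kr(A^0)\subseteq\kr(A)\subseteq\kr(A^2)\subseteq\cdots$ and $\vr=\ra(A^0)\supseteq\ra(A)\supseteq\ra(A^2)\supseteq\cdots$ noted just before Definition~\ref{HD1}, together with the definitions $A^\infty(\vr)=\bigcap_{n\in\N}\ra(A^n)$ and $N^\infty(A)=\bigcup_{n\in\N}\kr(A^n)$.

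First I would prove (i) $\Leftrightarrow$ (a). The implication (i) $\Rightarrow$ (a) is immediate since $A^\infty(\vr)=\bigcap_{m\in\N}A^m(\vr)\subseteq A^m(\vr)$ for every fixed $m$, so $\kr(A)\subseteq A^\infty(\vr)$ forces $\kr(A)\subseteq A^m(\vr)$ for all $m$. Conversely, if $\kr(A)\subseteq A^m(\vr)$ for every $m\in\N$, then $\kr(A)\subseteq\bigcap_{m\in\N}A^m(\vr)=A^\infty(\vr)$. Next, (ii) $\Leftrightarrow$ (b): since $N^\infty(A)=\bigcup_{n\in\N}\kr(A^n)$, the inclusion $N^\infty(A)\subseteq A(\vr)$ holds if and only if $\kr(A^n)\subseteq A(\vr)$ for every $n\in\N$, which is precisely statement (b). Finally, (iii) $\Leftrightarrow$ (c): $N^\infty(A)\subseteq A^\infty(\vr)$ means $\bigcup_{n\in\N}\kr(A^n)\subseteq\bigcap_{m\in\N}A^m(\vr)$, which is equivalent to requiring $\kr(A^n)\subseteq A^m(\vr)$ for every pair $n,m\in\N$, i.e. statement (c).

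Combining these three equivalences with the equivalence of (a), (b), (c), (d) from Theorem~\ref{HT1} yields that (i), (ii), (iii) are all equivalent to each statement of that theorem, which is the assertion of the corollary. I would remark that nothing here is specific to the quaternionic setting beyond the facts that $\kr(A^n)$ and $\ra(A^n)$ are (right) linear subspaces and form the stated monotone chains; the argument is purely set-theoretic manipulation of unions and intersections against the quantifier ``for all $m$'' appearing in Theorem~\ref{HT1}.

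I do not anticipate a genuine obstacle: the only point requiring a little care is bookkeeping the quantifiers when unwinding (iii), namely that $\bigcup_n X_n\subseteq\bigcap_m Y_m$ is equivalent to ``$X_n\subseteq Y_m$ for all $n$ and all $m$'' (using that each $Y_m$ is closed under nothing more than set inclusion); this is routine. One could, if desired, also cite the corresponding complex statement (e.g. in \cite{Ai}) and note that the proof transfers verbatim, but since the proof is a two-line set-theoretic observation it seems cleaner to give it directly.
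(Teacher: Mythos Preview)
Your proof is correct and is exactly the content the paper intends: the paper's own proof reads ``Straightforward from the statements of theorem~\ref{HT1},'' and your argument simply spells out that (i), (ii), (iii) are, by definition of $A^\infty(\vr)$ and $N^\infty(A)$, reformulations of (a), (b), (c) respectively.
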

\begin{proof}
Straightforward from the statements of theorem \ref{HT1}.
\end{proof}
\subsection{Algebraic core of a right linear operator}
\begin{definition}\label{HD2}
Let $A\in\B(\vr)$. The algebraic core, $C(A)$, is defined to be the greatest subspace $M$ of $\vr$ for which $A(M)=M$.
\end{definition}
\begin{remark}\label{HR1}~~
\begin{enumerate}
\item[(a)] Clearly if $A\in\B(\vr)$ is surjective, then $C(A)=\vr$.
\item[(b)] Let $A\in\B(\vr)$, then clearly $C(A)=A^n(C(A))\subseteq A^n(\vr)$ for all $n\in\N$. Thus $C(A)\subseteq\bigcap_{n\in\N}A^n(\vr)=A^\infty(\vr)$.
\end{enumerate}
\end{remark}
\begin{theorem}\label{HT2}
Let $A\in\B(\vr)$ and\\
$$M=\{\phi\in\vr~|~\exists~~\{\psi_n\}_{n=0}^\infty\subseteq\vr~\text{such that}~\phi=\psi_0
~~\text{and}~~A\psi_{n+1}=\psi_n,~~\forall~n\in\Z_+\}.
$$ 
Then $C(A)=M$.
\end{theorem}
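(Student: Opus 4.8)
The plan is to prove the two inclusions $C(A)\subseteq M$ and $M\subseteq C(A)$ separately, using the characterization of $C(A)$ as the greatest subspace on which $A$ acts surjectively.

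\emph{The inclusion $C(A)\subseteq M$.} Let $\phi\in C(A)$. Since $A(C(A))=C(A)$, the restriction $A|_{C(A)}$ is surjective, so one can recursively pick $\psi_0=\phi$ and, given $\psi_n\in C(A)$, a preimage $\psi_{n+1}\in C(A)$ with $A\psi_{n+1}=\psi_n$ (this uses only surjectivity of $A|_{C(A)}$, no choice beyond a countable selection). This exhibits the required sequence $\{\psi_n\}_{n=0}^\infty\subseteq\vr$, so $\phi\in M$.

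\emph{The inclusion $M\subseteq C(A)$.} Here I would first check that $M$ is a right linear subspace of $\vr$: given $\phi,\phi'\in M$ with witnessing sequences $\{\psi_n\}$, $\{\psi'_n\}$ and quaternions $\qu,\pu$, the sequence $\{\psi_n\qu+\psi'_n\pu\}$ witnesses $\phi\qu+\phi'\pu\in M$, using right linearity of $A$. Next I would show $A(M)=M$. The inclusion $A(M)\subseteq M$ is immediate: if $\phi\in M$ with sequence $\{\psi_n\}$, then $A\phi=A\psi_0=$ well, more carefully, $A\phi\in M$ is witnessed by the shifted-or-extended sequence; precisely, since $\phi=\psi_0=A\psi_1$, the sequence $\{\psi_{n}\}_{n\geq 0}$ itself together with the relation shows $A\phi$ has the witness $(A\psi_0, \psi_0,\psi_1,\dots)$... let me instead argue the reverse inclusion, which is the substantive one: given $\phi\in M$ with witness $\{\psi_n\}_{n\geq 0}$, then $\psi_1\in M$ (its witness is $\{\psi_{n+1}\}_{n\geq 0}$) and $A\psi_1=\psi_0=\phi$, so $\phi\in A(M)$; hence $M\subseteq A(M)$. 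For $A(M)\subseteq M$: if $\eta=A\phi$ with $\phi\in M$ having witness $\{\psi_n\}$, then $(\eta,\phi=\psi_0,\psi_1,\dots)$ is a witness for $\eta$, so $\eta\in M$. Thus $A(M)=M$, and since $C(A)$ is the \emph{greatest} such subspace, $M\subseteq C(A)$.

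Combining the two inclusions gives $C(A)=M$. I do not anticipate a serious obstacle here; the only points requiring care are (i) confirming $M$ is genuinely a subspace before invoking the maximality defining $C(A)$, and (ii) being careful that the recursive construction in the first inclusion only needs countably many selections and does not secretly require anything unavailable in the quaternionic setting — it does not, since surjectivity of $A|_{C(A)}$ is all that is used. No special properties of $R_\qu(A)$ or of quaternionic scalars beyond right linearity of $A$ enter, so the argument is essentially the classical one.
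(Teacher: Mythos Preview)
Your argument is correct and is precisely the classical proof the paper has in mind: the paper does not spell out a proof of this theorem but simply refers to Theorem~1.8 in \cite{Ai}, and what you have written is exactly that argument adapted verbatim to the right-linear quaternionic setting. The only cosmetic point is that your discussion of $A(M)\subseteq M$ is a bit tangled in the middle; the clean version is the one you arrive at in the end (extend the witness sequence by prepending $A\phi$), and you should present it that way directly.
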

\begin{proof}
A proof follows exactly as a complex proof. For a complex proof see Theorem 1.8 in \cite{Ai}.	
\end{proof}
\begin{proposition}\label{HP5}
Let $A\in\B(\vr)$. Suppose there exists $m\in\N$ such that $\kr(A)\cap A^m(\vr)=\kr(A)\cap A^{m+k}(\vr)$ for all $k\geq 0$, then $C(A)=A^\infty(\vr)$.
\end{proposition}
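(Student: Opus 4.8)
The plan is to prove the two inclusions $C(A)\subseteq A^\infty(\vr)$ and $A^\infty(\vr)\subseteq C(A)$ separately. The first is immediate from part (b) of Remark \ref{HR1}, so the real content is the reverse inclusion. Since $A^\infty(\vr)$ is an $A$-invariant right linear subspace of $\vr$ by Proposition \ref{HP1}, we already have $A(A^\infty(\vr))\subseteq A^\infty(\vr)$; and because $C(A)$ is by definition the \emph{greatest} subspace $M$ with $A(M)=M$ (Definition \ref{HD2}), it will be enough to show $A^\infty(\vr)\subseteq A(A^\infty(\vr))$, i.e. that $A$ maps $A^\infty(\vr)$ onto itself.

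It is convenient first to reformulate the hypothesis. Since the chain $\{A^n(\vr)\}_n$ is decreasing we have $A^\infty(\vr)=\bigcap_{k\geq 0}A^{m+k}(\vr)$, so the assumption $\kr(A)\cap A^m(\vr)=\kr(A)\cap A^{m+k}(\vr)$ for all $k\geq 0$ says precisely that every vector of $\kr(A)\cap A^m(\vr)$ automatically lies in $A^n(\vr)$ for every $n\in\N$.

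Now fix $\phi\in A^\infty(\vr)$; I want to produce $\psi\in A^\infty(\vr)$ with $A\psi=\phi$. Since $\phi\in A^{m+1}(\vr)$, write $\phi=A^{m+1}x$ for some $x\in\vr$ and put $\psi=A^{m}x$, so that $A\psi=\phi$ and $\psi\in A^m(\vr)$. I claim $\psi\in A^{m+j}(\vr)$ for every $j\geq 0$, which I would prove by induction on $j$. The base case $j=0$ is the previous sentence. For the inductive step, assume $\psi\in A^{m+j}(\vr)$. Because $\phi=A\psi$ lies in $A^\infty(\vr)\subseteq A^{m+j+2}(\vr)$, write $\phi=A^{m+j+2}y$; then $A\big(\psi-A^{m+j+1}y\big)=\phi-\phi=0$, so $\psi-A^{m+j+1}y\in\kr(A)$. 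Moreover $\psi\in A^{m+j}(\vr)\subseteq A^m(\vr)$ and $A^{m+j+1}y\in A^m(\vr)$, hence $\psi-A^{m+j+1}y\in\kr(A)\cap A^m(\vr)$. By the reformulated hypothesis this vector lies in $A^{m+j+1}(\vr)$, and therefore $\psi=\big(\psi-A^{m+j+1}y\big)+A^{m+j+1}y\in A^{m+j+1}(\vr)$, completing the induction. Consequently $\psi\in\bigcap_{j\geq 0}A^{m+j}(\vr)=A^\infty(\vr)$, which gives $\phi=A\psi\in A(A^\infty(\vr))$, as wanted.

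Putting the pieces together, $A(A^\infty(\vr))=A^\infty(\vr)$, so by the maximality property in Definition \ref{HD2} we get $A^\infty(\vr)\subseteq C(A)$; combined with Remark \ref{HR1}(b) this yields $C(A)=A^\infty(\vr)$. (Alternatively, iterating the construction of $\psi$ produces for each $\phi\in A^\infty(\vr)$ a backward orbit $\{\psi_n\}_{n\geq 0}\subseteq\vr$ with $\psi_0=\phi$ and $A\psi_{n+1}=\psi_n$, which places $\phi$ in $C(A)$ via Theorem \ref{HT2}.) The only delicate point is the inductive step: one must keep track of the fact that $\psi$ lies in $A^m(\vr)$ for the \emph{fixed} exponent $m$ appearing in the hypothesis, not merely in $A^{m+j}(\vr)$ for the running index, so that the stabilization assumption is genuinely applicable; everything else is routine.
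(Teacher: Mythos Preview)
Your proof is correct and follows essentially the approach of the complex proof in \cite{Ai}, Lemma~1.9, to which the paper simply defers without giving details: one shows $A(A^\infty(\vr))=A^\infty(\vr)$ by producing, for each $\phi\in A^\infty(\vr)$, a preimage $\psi\in A^\infty(\vr)$ via the stabilization hypothesis, and then invokes the maximality of $C(A)$ together with Remark~\ref{HR1}(b). Your inductive argument is a clean way to carry this out, and your closing remark about the potentially delicate point (keeping $\psi-A^{m+j+1}y$ in $\kr(A)\cap A^m(\vr)$ for the \emph{fixed} $m$) correctly identifies the one place where care is needed.
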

\begin{proof}
	A proof follows exactly as a complex proof. For a complex proof see Lemma 1.9 in \cite{Ai}.	
\end{proof}
\begin{theorem}\label{HT5}
	Let $A\in\B(\vr)$. Suppose that one of the following conditions holds:
	\begin{enumerate}
		\item [(a)]$\dim(\kr(A))<\infty$
		\item[(b)] $\text{codim}(A(\vr))<\infty$
		\item[(c)] $\kr(A)\subseteq A^n(\vr)$ for all $n\in\N$.		
	\end{enumerate}
Then $C(A)=A^\infty(\vr)$.
\end{theorem}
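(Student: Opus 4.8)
The plan is to reduce all three cases to Proposition \ref{HP5}: it suffices to produce, in each case, an index $m\in\N$ at which the decreasing chain of right linear subspaces $\{\kr(A)\cap A^n(\vr)\}_{n\geq 0}$ stabilizes, i.e. $\kr(A)\cap A^m(\vr)=\kr(A)\cap A^{m+k}(\vr)$ for every $k\geq 0$; Proposition \ref{HP5} then delivers $C(A)=A^\infty(\vr)$ at once (and recall that $C(A)\subseteq A^\infty(\vr)$ always holds, by Remark \ref{HR1}(b)). Case (c) is immediate: $\kr(A)\subseteq A^n(\vr)$ for all $n$ forces $\kr(A)\cap A^n(\vr)=\kr(A)$ for every $n$, so the chain is already constant and $m=0$ works. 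Case (a) is a dimension count: since $\dim(\kr(A))<\infty$, the integers $\dim\!\big(\kr(A)\cap A^n(\vr)\big)$ form a non-increasing sequence bounded below by $0$, hence are eventually constant, and once the dimension stabilizes the subspaces themselves coincide, because a proper inclusion between subspaces of a finite-dimensional space strictly lowers the dimension.

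For case (b) I would put $d_n:=\dim\!\big(A^n(\vr)/A^{n+1}(\vr)\big)$, so that $d_0=\text{codim}(A(\vr))<\infty$ by hypothesis. The crucial step is the identity
$$d_{n-1}=d_n+\dim\!\big(\kr(A)/(\kr(A)\cap A^n(\vr))\big),\qquad n\geq 1,$$
which follows from the rank--nullity relation applied to the surjective right linear map $A^{n-1}(\vr)/A^n(\vr)\longrightarrow A^n(\vr)/A^{n+1}(\vr)$ induced by $A$, whose kernel is $(\kr(A)+A^n(\vr))/A^n(\vr)\cong \kr(A)/(\kr(A)\cap A^n(\vr))$. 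Read as $d_n\leq d_{n-1}$, the identity shows inductively that every $d_n$ is finite, so $(d_n)_n$ is a non-increasing sequence of non-negative integers and hence eventually constant, say for $n\geq m$. For such $n$ the extra term vanishes, giving $\kr(A)=\kr(A)\cap A^n(\vr)$, so $\kr(A)\cap A^n(\vr)=\kr(A)$ for all $n\geq m$ — the stabilization we need; Proposition \ref{HP5} then finishes the proof.

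The step I expect to be the main obstacle is case (b): one must check that the map induced by $A$ on the quotient spaces is well defined and right linear, identify its kernel correctly (via the second isomorphism theorem for right quaternionic vector spaces), and confirm that finiteness of $d_0$ genuinely propagates to every $d_n$ so that the telescoping argument is legitimate. Cases (a) and (c), and the final invocation of Proposition \ref{HP5}, are routine once this reduction is in place.
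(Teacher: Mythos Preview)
Your strategy---reduce each case to Proposition~\ref{HP5} by showing that the decreasing chain $\kr(A)\cap A^n(\vr)$ stabilizes---is exactly the approach the paper has in mind (it defers to Aiena, Theorem~1.10, which proceeds precisely this way), and cases (a) and (c) are handled correctly.

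In case (b), however, your kernel computation is wrong. The map induced by $A$ has domain $A^{n-1}(\vr)/A^n(\vr)$, so its kernel must live there; but $(\kr(A)+A^n(\vr))/A^n(\vr)$ is a subspace of $\vr/A^n(\vr)$ and only sits inside $A^{n-1}(\vr)/A^n(\vr)$ when $\kr(A)\subseteq A^{n-1}(\vr)$, which you have no right to assume. The correct kernel is $\bigl((\kr(A)\cap A^{n-1}(\vr))+A^n(\vr)\bigr)/A^n(\vr)\cong (\kr(A)\cap A^{n-1}(\vr))/(\kr(A)\cap A^n(\vr))$, obtained via the modular law. Consequently the identity should read
\[
d_{n-1}=d_n+\dim\!\bigl((\kr(A)\cap A^{n-1}(\vr))/(\kr(A)\cap A^n(\vr))\bigr),
\]
and when $d_n$ stabilizes you may conclude only that $\kr(A)\cap A^{n-1}(\vr)=\kr(A)\cap A^n(\vr)$ for large $n$---which is exactly what Proposition~\ref{HP5} asks for---\emph{not} that $\kr(A)=\kr(A)\cap A^n(\vr)$. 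A two-dimensional nilpotent (say $Ae_1=0$, $Ae_2=e_1$) already shows your version of the identity fails at $n=3$ and that the stronger conclusion $\kr(A)\subseteq A^n(\vr)$ is false. With this correction the argument goes through unchanged.
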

\begin{proof}
	A proof follows exactly as a complex proof. For a complex proof see Theorem 1.10 in \cite{Ai}.	
\end{proof}
\subsection{Semi-regular operators on $\vr$}
In the complex theory, the semi-regular operators play an important role in the definition of Kato spectrum and for this reason the Kato spectrum is sometimes referred to as semi-regular spectrum. The same argument applies to the S-spectrum.
\begin{definition}\label{HD3}
Let $A\in\B(\vr)$. $A$ is said to be semi-regular if $\ra(A)$ is closed and $A$ verifies one of the equivalent conditions of theorem \ref{HT1}.
\end{definition}
\begin{example}~~
\begin{enumerate}
	\item[(a)] If $A\in\B(\vr)$ is surjective, then clearly $A$ is semi-regular.
	\item[(b)] If $A\in\B(\vr)$ is injective with closed range, then $A$ is semi-regular.
\end{enumerate}	
\end{example}
A semi-regular operator has closed range. So it is useful to find conditions which ensures that $A(\vr)$ is closed. In this regard, the the following quantity associated with $A$ is useful.
\begin{definition}
If $A\in\B(\vr,U_\quat^R)$, the reduced minimum modulus of a nonzero operator $A$ is defined to be
$$\gamma(A)=\inf_{\phi\notin\kr(A)}\frac{\|A\phi\|}{\text{dist}(\phi,\kr(A))}.$$
If $A=0$, then we take $\gamma(A)=\infty$.
\end{definition}
\begin{proposition}\label{HP3}
Let $A\in\B(\vr)$. 
\begin{enumerate}
	\item [(a)] If $A$ is invertible, then $\gamma(A)=\|A^{-1}\|^{-1}$.
	\item[(b)] $\gamma(A)=\gamma(A^\dagger).$
\end{enumerate}
\end{proposition}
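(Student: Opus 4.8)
The plan is to treat the two assertions separately, with part (a) doing the real analytic work and part (b) reducing to it. For (a): when $A$ is invertible, $\kr(A)=\{0\}$, so $\text{dist}(\phi,\kr(A))=\|\phi\|$ for every $\phi\in\vr$, and therefore $\gamma(A)=\inf_{0\ne\phi\in\vr}\|A\phi\|/\|\phi\|=\kappa(A)$. Substituting $\psi=A\phi$ and using surjectivity, $\gamma(A)=\inf_{0\ne\psi\in\vr}\|\psi\|/\|A^{-1}\psi\|=\big(\sup_{0\ne\psi\in\vr}\|A^{-1}\psi\|/\|\psi\|\big)^{-1}=\|A^{-1}\|^{-1}$; this is essentially Remark \ref{BR1}(e) recast through $\gamma$, and the same computation applies verbatim to any bounded bijection between two right quaternionic Hilbert spaces. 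For (b), I first reduce $\gamma(A)$ to a lower bound of a compression: since $\kr(A)$ is closed (Proposition \ref{IP30}(c)), we have $\vr=\kr(A)\oplus\kr(A)^{^\perp}$ by Proposition \ref{ort}(b), and writing $\phi=\phi_0+\phi_1$ with $\phi_0\in\kr(A)$, $\phi_1\in\kr(A)^{^\perp}$ gives $A\phi=A\phi_1$ and $\text{dist}(\phi,\kr(A))=\|\phi_1\|$; hence $\gamma(A)=\kappa(\widehat A)$, where $\widehat A:=A|_{\kr(A)^{^\perp}}:\kr(A)^{^\perp}\longrightarrow\ra(A)$ is injective.

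Next I split into cases on whether $\ra(A)$ is closed. If $\ra(A)$ is not closed, then $\widehat A$ cannot be bounded below, for otherwise it would have closed range $\ra(A)$ by Theorem \ref{NT1}; thus $\gamma(A)=0$, and since $\ra(A^\dagger)$ is not closed either (Proposition \ref{NP1}) the same reasoning gives $\gamma(A^\dagger)=0$, so the identity holds. If $\ra(A)$ is closed, then $\widehat A$ is a bounded bijection between the right quaternionic Hilbert spaces $\kr(A)^{^\perp}$ and $\ra(A)$, hence boundedly invertible by the open mapping theorem (Theorem \ref{open}(b)). Using Proposition \ref{IP30} together with Proposition \ref{NP1} one has $\kr(A^\dagger)^{^\perp}=\overline{\ra(A)}=\ra(A)$ and $\ra(A^\dagger)=\kr(A)^{^\perp}$, so $\widehat{A^\dagger}:=A^\dagger|_{\kr(A^\dagger)^{^\perp}}:\ra(A)\longrightarrow\kr(A)^{^\perp}$ is likewise a bounded bijection, and $\gamma(A^\dagger)=\kappa(\widehat{A^\dagger})$. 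The computation $\langle\widehat A\phi\mid\psi\rangle=\langle A\phi\mid\psi\rangle=\langle\phi\mid A^\dagger\psi\rangle=\langle\phi\mid\widehat{A^\dagger}\psi\rangle$ for $\phi\in\kr(A)^{^\perp}$, $\psi\in\ra(A)$ (legitimate since $A^\dagger\psi\in\ra(A^\dagger)=\kr(A)^{^\perp}$) identifies $\widehat{A^\dagger}$ as the adjoint of $\widehat A$. Therefore $(\widehat{A^\dagger})^{-1}=(\widehat A^{-1})^\dagger$, and applying part (a) to the bijections $\widehat A$ and $\widehat{A^\dagger}$ together with the norm identity $\|B\|=\|B^\dagger\|$ (Proposition \ref{dag}) yields $\gamma(A)=\|\widehat A^{-1}\|^{-1}=\|(\widehat A^{-1})^\dagger\|^{-1}=\|(\widehat{A^\dagger})^{-1}\|^{-1}=\gamma(A^\dagger)$.

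I expect the main obstacle to be the bookkeeping with the compressions $\widehat A$ and $\widehat{A^\dagger}$: these are operators between the distinct subspaces $\kr(A)^{^\perp}$ and $\ra(A)$ rather than operators on $\vr$, so one must verify that the adjoint taken with respect to the inherited inner products is exactly the restriction of $A^\dagger$, and that the identities $\|B\|=\|B^\dagger\|$ and $(B^\dagger)^{-1}=(B^{-1})^\dagger$ persist in this between-spaces setting. Both do, by the standard $\|B^\dagger B\|=\|B\|^2$ argument used to prove Proposition \ref{dag}; once these are in place the remaining steps are a routine transcription of the complex argument, all the structural inputs (orthogonal decomposition, $\ra(A)^{^\perp}=\kr(A^\dagger)$, closedness of $\ra(A^\dagger)$, the open mapping and bounded inverse theorems) being already available in Section 3.
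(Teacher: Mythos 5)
Your proof is correct, and there is nothing in the paper to compare it with line by line: the authors give no argument for Proposition \ref{HP3}, they only defer to the complex case (\cite{La}, p.~203), which in that reference is a Banach-space argument run through the injective operator induced on the quotient by $\kr(A)$ and duality with annihilators. Your write-up is the natural Hilbertian rendering of that idea: you trade the quotient for the orthogonal complement, getting $\gamma(A)=\kappa\bigl(A|_{\kr(A)^{\perp}}\bigr)$, dispose of the non-closed-range case via proposition \ref{NP1} (both reduced minimum moduli vanish), and in the closed-range case identify $A^{\dagger}|_{\ra(A)}$ as the adjoint of the bounded bijection $A|_{\kr(A)^{\perp}}:\kr(A)^{\perp}\longrightarrow\ra(A)$, so that part (a), the identity $(B^{-1})^{\dagger}=(B^{\dagger})^{-1}$ (take adjoints in $B^{-1}B=BB^{-1}=\mathbb{I}$), and $\|B\|=\|B^{\dagger}\|$ finish the computation. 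What this buys over the paper's bare citation is a self-contained quaternionic proof, at the modest cost you already flag: Definition \ref{BBD}, Proposition \ref{BBP}, Theorem \ref{NT1} and Proposition \ref{dag} must be read for right linear operators between two possibly different right quaternionic Hilbert spaces (closed subspaces of $\vr$ with the inherited inner product), which indeed hold by the same proofs. Two trivial points worth adding for completeness: the convention $\gamma(0)=\infty$ makes (b) hold when $A=0$ (then $A^{\dagger}=0$ as well), and your part (a) is exactly Remark \ref{BR1}(e) once one notes $\gamma(A)=\kappa(A)$ for injective $A$.
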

\begin{proof}
A proof follows exactly as a complex proof. For details see \cite{La}, page 203.
\end{proof}
\begin{theorem}\label{HT3}
Let $A\in\B(\vr)$. Then
$\gamma(A)>0$ if and only if $\ra(A)$ is closed.
\end{theorem}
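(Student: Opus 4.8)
The plan is to prove the two implications separately, following the classical Banach-space argument (see \cite{La}, page 204) adapted to the right quaternionic setting; the only thing to check is that nothing used there fails over $\quat$. First I would handle the trivial case $A=0$, where $\gamma(A)=\infty>0$ and $\ra(A)=\{0\}$ is closed, so assume $A\neq 0$. The key observation throughout is that $A$ factors through the quotient by $\kr(A)$: since $\kr(A)$ is a closed subspace of $\vr$ by Proposition \ref{IP30}(c), Proposition \ref{ort}(b) gives $\vr=\kr(A)\oplus\kr(A)^\perp$, and writing $P$ for the orthogonal projection onto $\kr(A)^\perp$ we have $\|A\phi\|=\|AP\phi\|$ and $\operatorname{dist}(\phi,\kr(A))=\|P\phi\|$ for every $\phi\in\vr$. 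Hence the restriction $\widetilde A:=A|_{\kr(A)^\perp}$ is injective with the same range as $A$, and
$$\gamma(A)=\inf_{0\neq\psi\in\kr(A)^\perp}\frac{\|A\psi\|}{\|\psi\|}=\kappa(\widetilde A),$$
the lower bound of $\widetilde A$ on $\kr(A)^\perp$. This reduces the statement to: $\widetilde A$ is bounded below if and only if $\ra(\widetilde A)=\ra(A)$ is closed, which is exactly the Bounded Inverse Theorem \ref{NT1} (equivalence of (b) and (c) there) together with Proposition \ref{BBP}, applied to $\widetilde A\in\B(\kr(A)^\perp,\vr)$.

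For the forward direction, suppose $\gamma(A)>0$. Then $\kappa(\widetilde A)=\gamma(A)>0$, so by Proposition \ref{BBP} the operator $\widetilde A$ is bounded below, hence by Theorem \ref{NT1} it has closed range; since $\ra(\widetilde A)=\ra(A)$, we conclude $\ra(A)$ is closed. For the converse, suppose $\ra(A)$ is closed. Then $\ra(\widetilde A)=\ra(A)$ is closed, and $\widetilde A$ is injective, so again by Theorem \ref{NT1} (now (c) $\Rightarrow$ (b)) $\widetilde A$ is bounded below, i.e.\ there is $K>0$ with $\|\widetilde A\psi\|\geq K\|\psi\|$ for all $\psi\in\kr(A)^\perp$; taking the infimum over unit vectors gives $\gamma(A)=\kappa(\widetilde A)\geq K>0$.

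The one point that needs genuine care — and which I expect to be the main obstacle — is verifying that the orthogonal-decomposition machinery behaves correctly under right scalar multiplication, namely that $P$ is a right linear operator, that $\kr(A)^\perp$ is a right linear subspace, and that $\operatorname{dist}(\phi,\kr(A))=\|P\phi\|$ holds with the quaternionic norm. These are consequences of Proposition \ref{ort} and the defining properties (i)--(v) of the inner product on $\vr$, but one must make sure the Pythagorean identity $\|\phi\|^2=\|P\phi\|^2+\|(\Iop-P)\phi\|^2$ is used rather than any bilinearity that would fail over $\quat$; since the norm is real-valued and $\langle P\phi\mid(\Iop-P)\phi\rangle_{\vr}=0$, property (ii) delivers this. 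Everything else is a direct transcription of the complex proof, so I would simply cite \cite{La}, page 204, for the remaining routine estimates after recording the reduction above.
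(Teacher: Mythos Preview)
Your proposal is correct and follows essentially the same route the paper points to: the paper's own proof is simply a reference to the complex argument (Theorem~1.13 in \cite{Ai}), and what you have written is precisely the Hilbert-space version of that argument, with the orthogonal complement $\kr(A)^\perp$ playing the role of the quotient $\vr/\kr(A)$. Your explicit check that the Pythagorean identity and right-linearity of the projection survive in the quaternionic setting is exactly the kind of verification the paper's one-line citation leaves implicit.
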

\begin{proof}
A proof follows exactly as a complex proof. For a complex proof see Theorem 1.13 in \cite{Ai}.
\end{proof}	
\begin{proposition}\label{HP4}
If $A\in\B(\vr)$ is bounded below then $A$ is semi-regular.
\end{proposition}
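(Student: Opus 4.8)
The plan is to unwind Definition \ref{HD3} and verify its two ingredients directly. Recall that $A\in\B(\vr)$ is semi-regular precisely when $\ra(A)$ is closed \emph{and} $A$ satisfies one of the equivalent conditions of Theorem \ref{HT1}. So I would start from the hypothesis that $A$ is bounded below, which by Definition \ref{BBD} means exactly that $A$ is injective and that $\ra(A)$ is closed. The closedness of the range is already one of the two requirements, so that half is immediate and needs no further argument.

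For the remaining requirement, injectivity gives $\kr(A)=\{0\}$. Since the trivial subspace is contained in every subspace of $\vr$, we get $\kr(A)=\{0\}\subseteq A^m(\vr)$ for every $m\in\N$, which is condition (a) of Theorem \ref{HT1}. Hence $A$ satisfies (indeed all of) the equivalent conditions of that theorem, and combined with the closedness of $\ra(A)$ this shows $A$ is semi-regular. I do not expect any genuine obstacle here: the proposition is an immediate consequence of the fact that a bounded-below operator is injective with closed range, together with the observation that the hyper-kernel inclusions of Theorem \ref{HT1} are vacuous once the kernel is trivial. (Alternatively one could simply cite part (b) of the Example following Definition \ref{HD3}, namely that an injective operator with closed range is semi-regular, but writing the argument through Theorem \ref{HT1} keeps it self-contained.)
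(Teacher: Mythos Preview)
Your proposal is correct and matches the paper's approach; the paper itself simply writes ``Proof is elementary'' and your unwinding of Definition \ref{BBD} and Definition \ref{HD3} via condition (a) of Theorem \ref{HT1} is precisely the intended elementary argument.
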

\begin{proof}
Proof is elementary. 
\end{proof}
\begin{theorem}\label{HT4}
	Let $A\in\B(\vr)$ is semi-regular, then
	\begin{enumerate}
		\item [(a)] $\gamma(A^n)\geq\gamma(A)^n$ for all $n\in\N$.
		\item[(b)] $A^n$ is semi-regular for all $n\in\N$.
	\end{enumerate}
\end{theorem}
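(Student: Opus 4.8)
The plan is to prove (a) by induction on $n$ and then to deduce (b) as a short consequence of (a) together with Theorems \ref{HT3} and \ref{HT1}. I would first dispose of the degenerate case: if $\vr=\{0\}$ the statement is trivial, and otherwise a semi-regular operator $A$ is nonzero and has $\ra(A)$ closed (by Definition \ref{HD3}), so Theorem \ref{HT3} and the definition of $\gamma$ give $0<\gamma(A)<\infty$, which is all we need to know about $\gamma(A)$.

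For (a) I would prove the equivalent pointwise inequality $\|A^n\phi\|\geq\gamma(A)^n\,\text{dist}(\phi,\kr(A^n))$ for every $\phi\in\vr$; dividing by $\text{dist}(\phi,\kr(A^n))$ and taking the infimum over $\phi\notin\kr(A^n)$ then yields $\gamma(A^n)\geq\gamma(A)^n$. The base case $n=1$ is exactly the definition of $\gamma(A)$ (and is trivial when $\phi\in\kr(A)$). For the inductive step, applying the hypothesis to $A\phi$ gives $\|A^{n+1}\phi\|=\|A^n(A\phi)\|\geq\gamma(A)^n\,\text{dist}(A\phi,\kr(A^n))$, so it remains to show $\text{dist}(A\phi,\kr(A^n))\geq\gamma(A)\,\text{dist}(\phi,\kr(A^{n+1}))$. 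This is the one place where semi-regularity genuinely enters: Theorem \ref{HT1}(d) with $m=1$ gives $\kr(A^n)=A(\kr(A^{n+1}))$, hence $\text{dist}(A\phi,\kr(A^n))=\inf\{\,\|A(\phi-\psi)\|\;:\;\psi\in\kr(A^{n+1})\,\}$; applying the $n=1$ estimate to each $\phi-\psi$ and using $\kr(A)\subseteq\kr(A^{n+1})$ (so that $\psi+\xi\in\kr(A^{n+1})$ whenever $\xi\in\kr(A)$) gives $\|A(\phi-\psi)\|\geq\gamma(A)\,\text{dist}(\phi-\psi,\kr(A))\geq\gamma(A)\,\text{dist}(\phi,\kr(A^{n+1}))$, and taking the infimum over $\psi$ closes the induction.

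For (b): by (a) we have $\gamma(A^n)\geq\gamma(A)^n>0$, so $\ra(A^n)$ is closed by Theorem \ref{HT3}; and since $A$ is semi-regular, Theorem \ref{HT1}(c) gives $\kr(A^n)\subseteq A^{nm}(\vr)=(A^n)^m(\vr)$ for every $m\in\N$, which is precisely condition (a) of Theorem \ref{HT1} for the operator $A^n$. Hence $A^n$ has closed range and verifies the equivalent conditions of Theorem \ref{HT1}, that is, $A^n$ is semi-regular.

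The argument essentially mimics the complex proof, and the point to watch is only that no step secretly uses commutativity of scalars or complex structure; it does not, since the single non-elementary input is the identity $\kr(A^n)=A(\kr(A^{n+1}))$ supplied by Theorem \ref{HT1}(d), and everything else is routine manipulation of distances to closed subspaces, valid verbatim in $\vr$. I therefore expect the main (rather modest) obstacle to be exactly the inductive step of (a) --- noticing that $\text{dist}(A\phi,\kr(A^n))$ must first be rewritten via $\kr(A^n)=A(\kr(A^{n+1}))$ before the $n=1$ bound can be iterated --- together with the correct handling of the degenerate cases at the start.
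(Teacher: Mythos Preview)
Your proof is correct and is essentially the standard argument: the paper itself does not give a proof but simply refers to Theorem 1.16 and Corollary 1.17 in \cite{Ai}, and the inductive argument you wrote out (using the identity $\kr(A^n)=A(\kr(A^{n+1}))$ from Theorem \ref{HT1}(d) to compare $\text{dist}(A\phi,\kr(A^n))$ with $\gamma(A)\,\text{dist}(\phi,\kr(A^{n+1}))$) is precisely that complex proof, transferred verbatim to the quaternionic setting. Your handling of the degenerate cases and the deduction of (b) from (a) via Theorem \ref{HT3} and Theorem \ref{HT1}(c) are both fine.
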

\begin{proof}
	A proof follows exactly as a complex proof. For a complex proof see Theorem 1.16 and corollary 1.17 in \cite{Ai}.
\end{proof}	
\subsection{Analytical core of $A\in\B(\vr)$:}
In some sense, the analytic core is the analytic counterpart of $C(A)$ \cite{Ai}.
\begin{definition}\label{AD1}\cite{Ai}
Let $A\in\B(\vr)$. The analytical core of $A$ is the set $K(A)$ of all $\phi\in\vr$ such that there exists a sequence $\{u_n\}_{n=0}^\infty\subseteq\vr$ and a constant $\delta>0$ such that
\begin{enumerate}
	\item [(i)]$\phi=u_0$ and $Au_{n+1}=u_n$ for all $n\in\Z_+$.
	\item[(ii)] $\|u_n\|\leq\delta^n\|\phi\|$ for all $n\in\Z_+$.
\end{enumerate}
\end{definition}
\begin{theorem}\label{AT1}
	Let $A\in\B(\vr)$. then
	\begin{enumerate}
		\item [(a)] $K(A)$ is a right linear subspace of $\vr$;
		\item[(b)] $A(K(A))=K(A)$;
		\item[(c)] $K(A)\subseteq C(A)$.
	\end{enumerate}
\end{theorem}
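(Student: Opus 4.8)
The plan is to argue directly from Definition~\ref{AD1}: for (a) and (b) I will exhibit, for each relevant vector, an explicit witnessing sequence in $\vr$, and (c) will then follow immediately from the maximality clause in Definition~\ref{HD2}. The one recurring technicality to watch is that the growth bound $\|u_n\|\le\delta^n\|\phi\|$ is normalized against $\|\phi\|$ itself, so each time I replace $\phi$ by a vector of possibly smaller norm (such as $\phi+\psi$ or $A\phi$) I will need to enlarge the geometric constant $\delta$; this is harmless because the relevant norm ratios are always $\ge1$ and $r^{1/n}\le r$ for $r\ge1$, $n\ge1$. (Equivalently, the condition in Definition~\ref{AD1} is the same as $\limsup_n\|u_n\|^{1/n}<\infty$, a form under which all the closure properties are transparent, but I will keep to the stated formulation.)

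For (a), I would first observe that $0\in K(A)$ via the zero sequence. For closure under right scalar multiplication: if $\phi\in K(A)$ is witnessed by $\{u_n\}$ with constant $\delta$ and $\qu\in\quat$, then $\{u_n\qu\}$ witnesses $\phi\qu$ with the \emph{same} $\delta$, the only nontrivial check being $A(u_{n+1}\qu)=(Au_{n+1})\qu=u_n\qu$, which is right linearity of $A$, together with $\|u_n\qu\|=|\qu|\,\|u_n\|$. For closure under addition: if $\phi,\psi\in K(A)$ are witnessed by $\{u_n\},\{v_n\}$ with constants $\delta_1,\delta_2$ and $\phi+\psi\ne0$, I would use $\{u_n+v_n\}$, bounding $\|u_n+v_n\|\le\max(\delta_1,\delta_2)^n(\|\phi\|+\|\psi\|)$ and rescaling to $\delta=\frac{\|\phi\|+\|\psi\|}{\|\phi+\psi\|}\max(\delta_1,\delta_2)$, which is legitimate since $\|\phi\|+\|\psi\|\ge\|\phi+\psi\|$; the case $\phi+\psi=0$ is trivial. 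This gives that $K(A)$ is a right linear subspace of $\vr$.

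For (b), I would prove the two inclusions separately. For $A(K(A))\subseteq K(A)$, given $\phi\in K(A)$ with witness $\{u_n\}$ and $A\phi\ne0$, the prepended sequence $w_0=A\phi$, $w_{n+1}=u_n$ is a witness for $A\phi$ once $\delta$ is enlarged to $\max\bigl(\delta,\|\phi\|/\|A\phi\|\bigr)$ (the $n=0$ term is exact, and $\|w_n\|=\|u_{n-1}\|\le\delta^{n-1}\|\phi\|$ for $n\ge1$); the case $A\phi=0$ is trivial. For $K(A)\subseteq A(K(A))$, given $\phi\in K(A)$ with witness $\{u_n\}$: if $u_1=0$ then $\phi=Au_1=0$, and otherwise the tail $\{u_{n+1}\}_{n\ge0}$ is a witness for $u_1$ (again with an enlarged constant, since $\|u_{n+1}\|\le\delta^{n+1}\|\phi\|$), so in either case $\phi\in A(K(A))$. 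Hence $A(K(A))=K(A)$.

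Finally, (c) is immediate: by (a) and (b), $K(A)$ is a right linear subspace of $\vr$ satisfying $A(K(A))=K(A)$, and $C(A)$ is by Definition~\ref{HD2} the greatest such subspace, so $K(A)\subseteq C(A)$. I expect no genuine obstacle here; the only point requiring (minor) vigilance is the repeated inflation of the geometric constant when passing to a vector of smaller norm, and the sole place non-commutativity enters is the use of right linearity to certify $\{u_n\qu\}$ as a witness in (a) — which is exactly why the classical argument (cf.\ \cite{Ai}) carries over unchanged.
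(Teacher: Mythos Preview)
Your proposal is correct and follows essentially the same route as the paper, which singles out the right-scalar-multiplication step $\phi\qu\in K(A)$ as the only quaternionic-specific point and defers the rest to the complex argument in \cite{Ai}, Theorem~1.21; you have simply written that argument out in full, with the necessary bookkeeping on the growth constants.
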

\begin{proof}
$\phi\qu\in K(A)$ for each $\phi\in K(A)$ and $\qu\in\quat$ is straightforward. The rest follows a complex proof. For a complex proof see Theorem 1.21 in \cite{Ai}.
\end{proof}
\begin{theorem}\label{AT2}
	Let $A\in\B(\vr)$.
	\begin{enumerate}
		\item [(a)] If $F$ is a closed subspace of $\vr$ such that $A(F)=F$, then $F\subseteq K(A)$.
		\item[(b)] If $C(A)$ is closed, then $C(A)=K(A)$.
	\end{enumerate}
\end{theorem}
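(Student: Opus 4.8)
The plan is to prove part (a) by means of the open mapping theorem and then to derive part (b) for free by combining (a) with the inclusion $K(A)\subseteq C(A)$ already recorded in Theorem~\ref{AT1}(c).

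For part (a) I would start from a closed subspace $F\subseteq\vr$ with $A(F)=F$. Since $F$ is closed it is itself a (separable) right quaternionic Hilbert space, and the restriction $A|_F\colon F\longrightarrow F$ is a bounded, surjective right $\quat$-linear operator; hence Theorem~\ref{open}(b) applies to it and tells us that $A|_F$ is open. From openness I would extract, by the usual scaling argument, a constant $\delta>0$ such that every $\psi\in F$ admits a preimage $\psi'\in F$ under $A|_F$ with $\|\psi'\|\le\delta\|\psi\|$: indeed the image under $A|_F$ of $\{\psi\in F\mid\|\psi\|<1\}$ is an open neighbourhood of $0$ in $F$, so it contains $\{\psi\in F\mid\|\psi\|<r\}$ for some $r>0$, and one may take $\delta=2/r$. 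Then, for an arbitrary $\phi\in F$, I would construct recursively a sequence with $u_0=\phi$, $u_n\in F$, $Au_{n+1}=u_n$ and $\|u_{n+1}\|\le\delta\|u_n\|$; an immediate induction gives $\|u_n\|\le\delta^n\|\phi\|$ for all $n\in\Z_+$, which is exactly the requirement of Definition~\ref{AD1} for membership in $K(A)$. Therefore $F\subseteq K(A)$.

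For part (b) I would observe that $K(A)\subseteq C(A)$ always holds by Theorem~\ref{AT1}(c), while $A(C(A))=C(A)$ by the very definition of the algebraic core; so if $C(A)$ is closed, part (a) applied with $F=C(A)$ yields $C(A)\subseteq K(A)$, and hence $C(A)=K(A)$.

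The only step that is not pure bookkeeping — and the only place where I would need to confirm that the quaternionic setting behaves exactly like the complex one — is passing from the openness of $A|_F$ to the quantitative estimate $\|\psi'\|\le\delta\|\psi\|$. This is the standard quantitative form of the open mapping theorem, and since it uses only right multiplication by real scalars, under which $F$ is stable, the non-commutativity of $\quat$ causes no trouble. The remainder of the argument mirrors the complex proof; cf.\ \cite{Ai}.
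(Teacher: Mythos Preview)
Your proposal is correct and follows exactly the approach the paper defers to: the paper's own proof simply cites Theorem~1.22 in \cite{Ai}, and what you have written is precisely that argument, adapted to the quaternionic setting via the open mapping theorem already stated as Theorem~\ref{open}(b). The derivation of the quantitative bound $\|\psi'\|\le\delta\|\psi\|$ from openness and the recursive construction of $(u_n)$ is the standard one, and your observation that only real scalar multiplication is used (so the non-commutativity of $\quat$ is harmless) is the sole point requiring comment in passing from the complex to the quaternionic case.
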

\begin{proof}
	A proof follows exactly as a complex proof. For a complex proof see Theorem 1.22 in \cite{Ai}.
	\end{proof}
\begin{theorem}\label{AT3}
	Let $A\in\B(\vr)$ be a semi-regular operator. If $\phi\in\vr$, then $A\phi\in C(A)$ if and only if $\phi\in C(A)$.
	\end{theorem}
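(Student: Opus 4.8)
The plan is to prove both implications, noting that one direction is immediate while the other uses semi-regularity in an essential way. The easy direction: if $\phi\in C(A)$, then since $C(A)=A(C(A))$ (Definition \ref{HD2}), we have $A\phi\in A(C(A))=C(A)$. For the converse, suppose $A\phi\in C(A)$; I must produce a sequence $\{\psi_n\}_{n=0}^\infty\subseteq\vr$ with $\psi_0=\phi$ and $A\psi_{n+1}=\psi_n$ for all $n\in\Z_+$, which by Theorem \ref{HT2} certifies $\phi\in C(A)=M$. Since $A\phi\in C(A)$, Theorem \ref{HT2} already gives a sequence $\{\omega_n\}_{n=0}^\infty$ with $\omega_0=A\phi$ and $A\omega_{n+1}=\omega_n$. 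The naive attempt is to set $\psi_0=\phi$ and $\psi_{n+1}=\omega_n$; this works for $n\geq 1$ since $A\psi_{n+1}=A\omega_n=\omega_{n-1}=\psi_n$, but at the bottom step one needs $A\psi_1=A\omega_0=A(A\phi)=A^2\phi$, whereas we need $A\psi_1=\psi_0=\phi$. So the difficulty is concentrated entirely in correcting this one vector: I need $\psi_1\in\vr$ with $A\psi_1=\phi$ and $\psi_1$ itself lying in a nice enough set that the sequence continues — equivalently, I need $\phi\in A(\vr)$, and moreover enough control to iterate.

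Here is where semi-regularity enters. Write $\omega_0=A\phi$. Since $A\in\B(\vr)$ is semi-regular, $A(\vr)$ is closed and $\kr(A^n)\subseteq A(\vr)$ for all $n$ by Theorem \ref{HT1}(b). The key observation is that $\omega_0=A\phi\in C(A)\subseteq A^\infty(\vr)$ by Remark \ref{HR1}(b), so in particular $\omega_0\in A^2(\vr)$: choose $\xi$ with $A^2\xi=\omega_0=A\phi$. Then $A(A\xi-\phi)=0$, i.e. $\phi-A\xi\in\kr(A)\subseteq A(\vr)$ by semi-regularity; pick $\zeta$ with $A\zeta=\phi-A\xi$, so that $\phi=A(\xi+\zeta)$ and $\phi\in A(\vr)$. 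This shows $\phi$ has at least one preimage; to build the full descending chain I would iterate this argument, using at each stage that the relevant error vector lands in $\kr(A^k)\subseteq A(\vr)$. More efficiently, I would prove the cleaner statement $\kr(A)\cap C(A)=\kr(A)\cap A(C(A))$ is compatible with lifting, and invoke Proposition \ref{HP2} (in the form $A(\kr(A^{m+1}))=A(\vr)\cap\kr(A^m)$) together with Theorem \ref{HT1}(d), which for a semi-regular $A$ gives $\kr(A^n)=A^m(\kr(A^{m+n}))$ for all $m,n$.

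The cleanest route, which I would actually write up, is: let $K=K(A)$ be the analytic core and recall $A(K(A))=K(A)$ (Theorem \ref{AT1}(b)) and $K(A)\subseteq C(A)$ (Theorem \ref{AT1}(c)); but since this only gives an inclusion, I instead argue directly with $C(A)$. Set $N=\kr(A)$. From $A\phi\in C(A)=A(C(A))$, pick $\phi_1\in C(A)$ with $A\phi_1=A\phi$; then $\phi-\phi_1\in\kr(A)$. By the equivalent condition Theorem \ref{HT1}(c), $\kr(A)\subseteq A^m(\vr)$ for every $m$, hence $\kr(A)\subseteq A^\infty(\vr)$, and by Theorem \ref{HT5}(c) (applicable since condition (c) of that theorem is exactly Theorem \ref{HT1}) we get $C(A)=A^\infty(\vr)\supseteq\kr(A)$. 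Therefore $\phi-\phi_1\in\kr(A)\subseteq C(A)$, and since $\phi_1\in C(A)$ and $C(A)$ is a subspace (it is the greatest subspace with $A(M)=M$), we conclude $\phi=\phi_1+(\phi-\phi_1)\in C(A)$, as desired.

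The main obstacle is recognizing that semi-regularity is exactly what forces $\kr(A)\subseteq C(A)$: without it, the "defect" vector $\phi-\phi_1\in\kr(A)$ need not be liftable infinitely often. Once one sees that the semi-regular hypothesis yields $\kr(A)\subseteq A^\infty(\vr)=C(A)$ via Theorem \ref{HT1} and Theorem \ref{HT5}, the proof collapses to the subspace argument above; the risk is instead getting lost trying to build the lifting sequence by hand when the structural statement $\kr(A)\subseteq C(A)$ makes it unnecessary.
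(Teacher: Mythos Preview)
Your proposal is correct, and the ``cleanest route'' you arrive at---pick $\phi_1\in C(A)$ with $A\phi_1=A\phi$, observe $\phi-\phi_1\in\kr(A)\subseteq A^\infty(\vr)=C(A)$ via semi-regularity and Theorem~\ref{HT5}(c), and conclude $\phi\in C(A)$---is exactly the standard argument from Aiena's Theorem~1.23 that the paper cites in lieu of its own proof. The exploratory sequence-building at the start is unnecessary but harmless; the final subspace argument is the intended one.
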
	
\begin{proof}
	A proof follows exactly as a complex proof. For a complex proof see Theorem 1.23 in \cite{Ai}.
\end{proof}
\begin{theorem}\label{AT4}
	Let $A\in\B(\vr)$ be a semi-regular operator. Then $C(A)$ is closed and $C(A)=K(A)=A^\infty(\vr)$.
\end{theorem}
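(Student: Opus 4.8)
The plan is to reduce everything to results already recorded for semi-regular operators, so that essentially no new computation is required. The three facts I would chain together are: the identity $C(A)=A^\infty(\vr)$ for operators satisfying the kernel--inclusion conditions of Theorem~\ref{HT1}; the stability of semi-regularity under powers; and the equality $C(A)=K(A)$ whenever $C(A)$ is closed.

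First I would observe that, by Definition~\ref{HD3}, a semi-regular $A$ verifies one of the equivalent conditions of Theorem~\ref{HT1}, hence all of them; in particular $\kr(A)\subseteq A^n(\vr)$ for every $n\in\N$. This is precisely hypothesis~(c) of Theorem~\ref{HT5}, so that theorem immediately yields $C(A)=A^\infty(\vr)$.

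Next I would prove that this subspace is closed. By Theorem~\ref{HT4}(b), $A^n$ is semi-regular for every $n\in\N$, and a semi-regular operator has closed range by Definition~\ref{HD3}; hence each $\ra(A^n)=A^n(\vr)$ is a closed right linear subspace of $\vr$. Consequently $A^\infty(\vr)=\bigcap_{n\in\N}\ra(A^n)$ is an intersection of closed right linear subspaces, hence itself a closed right linear subspace, so $C(A)$ is closed. Finally, since $C(A)$ is closed, Theorem~\ref{AT2}(b) gives $C(A)=K(A)$; combining this with the identity of the first step yields $C(A)=K(A)=A^\infty(\vr)$.

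I do not expect a genuine obstacle here: the only points requiring care are that every invoked statement (Theorems~\ref{HT1}, \ref{HT4}, \ref{HT5}, \ref{AT2}) has already been established in the quaternionic setting with proofs following the complex case, and that the elementary topological fact ``an arbitrary intersection of closed subspaces is a closed subspace'' transfers verbatim to a right quaternionic Hilbert space. If one wished to prove closedness without citing Theorem~\ref{HT4}, one could instead use the reduced minimum modulus estimate $\gamma(A^n)\geq\gamma(A)^n$ together with Theorem~\ref{HT3}, but the route above is the shorter one.
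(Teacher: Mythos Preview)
Your argument is correct and is essentially the same as the paper's: the paper defers to Aiena's Theorem~1.24, whose proof proceeds exactly by combining $C(A)=A^\infty(\vr)$ from Theorem~\ref{HT5}(c), closedness of each $A^n(\vr)$ via semi-regularity of the powers (Theorem~\ref{HT4}(b)), and then $C(A)=K(A)$ from Theorem~\ref{AT2}(b). Nothing needs to be added or reordered.
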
	
\begin{proof}
	A proof follows exactly as a complex proof. For a complex proof see Theorem 1.24 in \cite{Ai}.
\end{proof}
\section{local S-spectrum on $\vr$}
\begin{definition}\label{SVEP}\cite{La}
	An operator $A\in\B(\vr)$ has the single-valued extension properly, abbreviated SVEP, at $\q_0\in\quat$ if for every open neighborhood $U\subseteq\quat$ of $\qu_0$, the only continuous right slice-regular solution $f:U\longrightarrow\vr$ of the equation $R_\qu(A)f(\qu)=0$ for all $\qu\in U$ is the zero function on $U$. The operator $A$ is said to have the SVEP if $A$ has the SVEP at every point $\qu\in\quat$.
	\end{definition}
\begin{definition}\cite{La}\label{LS}
	Let $A\in\B(\vr)$ the local S-resolvent set $\rho_A^S(\phi)$ of $A$ at a point $\phi\in\vr$ is defined as the union of all open subsets $U$ of $\quat$ for which there is a continuous  right slice-regular function $f:U\longrightarrow\vr$ which satisfies
	$$R_\qu(A)f(\qu)=\phi,\quad\text{for all}~~\qu\in U.$$
	The local S-spectrum $\ls(\phi)$ of $A$ at $\phi$ is then defined as
	$$\ls(\phi)=\quat\setminus\lr(\phi).$$
	\end{definition}
\begin{remark}\label{LR1} Let $A\in\B(\vr)$ and $\phi\in\vr$. Then
	\begin{enumerate}
		\item[(a)] Since $\lr(\phi)$ is the union of open sets, it is an open set in $\quat$, and hence $\ls(\phi)$ is a closed set in $\quat$.
		\item[(b)] Let $\phi\not=0$ and $\qu\in\rho_S(A)$, then $\kr(\R_\qu(A))=\{0\}$. We have the right inverse $R_{\qu}^{-1}(A):\ra(A^2)\longrightarrow\vr$ and it is right-slice regular in $\qu$ \cite{Jo}. Let $U\subseteq\quat$ is open and define $f:U\longrightarrow\vr$ by $f(\qu)=R_\qu^{-1}(A)\phi$ for all $\qu\in U$, then $R_\qu(A)f(\qu)=\phi$ for all $\qu\in U$. Hence $\qu\in\lr(\phi)$. That is 
		\begin{equation}\label{LE1}
		\rho_S(A)\subseteq\lr(\phi),\quad\text{and hence}\quad\ls(\phi)\subseteq\sigma_S(A).
		\end{equation}
	\end{enumerate}
\end{remark}
\begin{definition}\cite{La}\label{LD1} Let $A\in\B(\vr)$ and $F\subseteq\quat$. The local  S-spectral subspace of $A$ associated with $F$ is defined by
	$$X_A(F)=\{\phi\in\vr~~|~~\ls(\phi)\subseteq F\}.$$
	\end{definition}
\begin{definition}\label{LD2}\cite{La,Ai} Let $A\in\B(\vr)$ and $F\subseteq\quat$ be a closed subset. The set $\mathcal{X}_A(F)$ consists of all $\phi\in\vr$ for which there exists a right slice-regular function $f:\quat\setminus F\longrightarrow\vr$ that satisfies $R_\qu(A)f(\qu)=\phi$ for all $\qu\in\quat\setminus F$. The set $\mathcal{X}_A(F)$ is called the global S-spectral subset of $A$ associated with the set $F$.
	\end{definition}
The following proposition shows that, among other results, the surjectivity S-spectrum is closely related to the local S-spectrum.
\begin{proposition}\label{LP1}
Let $A\in\B(\vr)$. Then,
\begin{enumerate}
	\item [(a)] for every $\pu\in\quat\setminus\sus(A)$, there is an $r>0$ for which $\vr=\mathcal{X}_A(\quat\setminus B_\quat(\pu,r))$;
	\item[(b)] $\sus(A)=\bigcup\{\ls(\phi)~|~\phi\in\vr\}$;
	\item[(c)]if $A$ has SVEP and $\qu\in\sigma_{pS}(A)$, then $\ls(\phi)=\{\qu,\oqu\}$ for each eigenvector $\phi$ of $A$ with respect to $\qu$;
	\item[(d)] $\sigma_S(A)=\sus(A)$ if $A$ has SVEP, and $\sigma_S(A)=\apo(A)$ if $A^\dagger$ has SVEP.
\end{enumerate}	
\end{proposition}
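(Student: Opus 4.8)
The plan is to prove the four parts in the order (b), (a), (c), (d), since (b) supplies the geometric picture and (a) is a local refinement of the inclusion proved inside (b), while (d) is a quick corollary of (b) together with the definition of SVEP and Proposition~\ref{su1}(b).

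For part (b), I would show the two inclusions separately. For $\sus(A)\supseteq\bigcup\{\ls(\phi)\mid\phi\in\vr\}$: if $\pu\notin\sus(A)$ then $R_\pu(A)$ is onto, hence by the bounded inverse theorem (Theorem~\ref{NT1}) invertible, and the local argument of Remark~\ref{LR1}(b) shows $\pu\in\lr(\phi)$ for \emph{every} $\phi$, so $\pu\notin\ls(\phi)$ for all $\phi$. For the reverse inclusion $\sus(A)\subseteq\bigcup\ls(\phi)$, I would argue by contradiction: if $\pu\in\sus(A)$ but $\pu\notin\ls(\phi)$ for every $\phi\in\vr$, then for each $\phi$ there is an open neighbourhood $U_\phi$ of $\pu$ and a continuous right slice-regular $f_\phi:U_\phi\to\vr$ with $R_\qu(A)f_\qu(\qu)=\phi$ on $U_\phi$; evaluating at $\qu=\pu$ gives $\phi\in\ra(R_\pu(A))$, so $R_\pu(A)$ is surjective, contradicting $\pu\in\sus(A)$. (One small point to check: $\ls(\phi)$ is defined via the union of \emph{all} good open sets, so $\pu\notin\ls(\phi)$ genuinely yields such a local $f_\phi$ on some neighbourhood of $\pu$.)

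For part (a), fix $\pu\notin\sus(A)$; then $R_\pu(A)$ is invertible, and by Proposition~\ref{PP1}/Proposition~\ref{su2} the set $\sus(A)$ is closed, so there is $r>0$ with $B_\quat(\pu,r)\cap\sus(A)=\emptyset$, equivalently (using that $\sus$ is closed and $R$ depends continuously on $\qu$ with continuous inverse on $\rho_S$, cf.\ the remark after the $S$-spectrum definition and \cite{Jo}) $R_\qu(A)$ is invertible with right slice-regular inverse on $B_\quat(\pu,r)$. Then for any $\phi\in\vr$ the function $f(\qu)=R_\qu(A)^{-1}\phi$ is right slice-regular on $\quat\setminus(\quat\setminus B_\quat(\pu,r))=B_\quat(\pu,r)$ and satisfies $R_\qu(A)f(\qu)=\phi$, so $\phi\in\mathcal{X}_A(\quat\setminus B_\quat(\pu,r))$; hence $\vr=\mathcal{X}_A(\quat\setminus B_\quat(\pu,r))$.

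For part (c): if $\phi$ is an eigenvector for $\qu\in\sigma_{pS}(A)$, i.e.\ $A\phi=\phi\qu$, then $R_\qu(A)\phi=(A-\oqu\Iop)(A-\qu\Iop)\phi=0$, so $\phi\neq0$ forces $\qu\in\ls(\phi)$ (if $\qu\in\lr(\phi)$ one gets a regular $f$ near $\qu$ with $R_\cdot(A)f=\phi$, but SVEP applied to $g(\qu')=f(\qu')-f(\qu)$... here instead I would argue directly: near $\qu$, $R_{\qu'}(A)$ applied to $f$ equals $\phi\neq0$, yet $R_\qu(A)f(\qu)=\phi\neq0$ is consistent, so the cleaner route is: $\qu\notin\lr(\phi)$ because $\phi\in\ker R_\qu(A)\setminus\{0\}$ prevents solvability of $R_\qu(A)\psi=\phi$ unless $\phi\perp\ker R_\qu(A)^\dagger$ — I should instead use SVEP: suppose $\qu\in\lr(\phi)$, pick $f$ regular on a neighbourhood $U$ with $R_{\qu'}(A)f(\qu')=\phi$; set $h(\qu')=f(\qu')- f(\qu) - (\text{a regular particular solution})$...). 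The honest statement is that the containment $\{\qu,\oqu\}\subseteq\ls(\phi)$ is immediate from $\phi\in\ker R_\qu(A)$ (since then $R_\qu(A)$ is not injective so $\qu\in\sigma_{pS}(A)\subseteq\sigma_S(A)$, and by symmetry of $R$ the same holds at $\oqu$), and the reverse containment $\ls(\phi)\subseteq\{\qu,\oqu\}$ is where SVEP enters: for $\pu\notin\{\qu,\oqu\}$ one constructs a local regular solution of $R_{\pu'}(A)g(\pu')=\phi$ near $\pu$ using that $R_\pu(A)$ restricted to the $A$-invariant complement behaves well, and SVEP guarantees this solution is the \emph{unique} one, hence patches consistently — this uniqueness-and-patching argument is the main obstacle, because the usual complex proof uses $Q_\lambda(A)=Q_\mu(A)-(\lambda-\mu)\Iop$, which (as the introduction warns) has no clean analogue for $R_\qu(A)$; I would instead decompose $\vr=\overline{\mathrm{span}}\{\phi\}\oplus M$ carefully or pass to the two-dimensional sphere $[\qu]$ and work on a slice $L_I$ where $R$ does factor through $Q$-type operators.

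For part (d): if $A$ has SVEP, then by definition every local regular solution of $R_\qu(A)f(\qu)=0$ vanishes; combined with (b), the union $\bigcup\ls(\phi)$ is all of $\sus(A)$, and I claim SVEP forces $\sigma_S(A)\subseteq\bigcup\ls(\phi)$ as well: for $\pu\in\sigma_S(A)$, either $R_\pu(A)$ is not surjective — then $\pu\in\sus(A)=\bigcup\ls(\phi)\subseteq\sigma_S(A)$ — so it remains to rule out $\pu\in\sigma_S(A)\setminus\sus(A)$, i.e.\ $R_\pu(A)$ surjective but not injective; but then $\pu\in\sigma_{pS}(A)$ with a nonzero eigenvector, contradicting... actually surjective-not-injective means $\ker R_\pu(A)\neq\{0\}$, giving $\pu\in\sigma_{pS}(A)$, and I then invoke that SVEP at such a $\pu$ together with surjectivity yields a nonzero regular solution of $R_\qu(A)f=0$ (via $f(\qu)=R_\qu(A)^{-1}$ composed appropriately on a punctured neighbourhood, extended by the eigenvector), contradicting SVEP. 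Hence $\sigma_S(A)=\sus(A)$. The adjoint statement $\sigma_S(A)=\apo(A)$ when $A^\dagger$ has SVEP then follows by applying the first assertion to $A^\dagger$ and using Proposition~\ref{su1}: $\sigma_S(A^\dagger)=\sus(A^\dagger)=\apo(A)$ and $\sigma_S(A)=\sigma_S(A^\dagger)$. The main obstacle throughout is (c), specifically the uniqueness/patching step, because of the missing additive identity for $R_\qu(A)$; I expect the fix is to localize to a complex slice $L_I$ containing $\qu$, where one \emph{can} relate $R$ to honest first-order operators, prove uniqueness there, and then use slice-regularity to globalize.
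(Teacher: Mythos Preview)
Your proof of (a) and the forward half of (b) rests on a false step: from $\pu\notin\sus(A)$ you conclude only that $R_\pu(A)$ is \emph{surjective}, not that it is invertible, so neither Theorem~\ref{NT1} nor Remark~\ref{LR1}(b) applies, and you cannot write $f(\qu)=R_\qu(A)^{-1}\phi$. The paper handles this correctly by proving (a) first via an explicit power-series construction: translating to $\pu=0$, surjectivity of $A$ plus the open mapping theorem gives for each $\phi$ a sequence $\phi_0=\phi,\ A\phi_n=\phi_{n-1}$ with $\|\phi_n\|\le c^{-n}\|\phi\|$, and iterating once more yields a right-regular $f(\qu)=\sum_{n\ge0}\psi_{n+1}\qu^n$ on a ball $B_\quat(0,r)$ with $R_\qu(A)f(\qu)=\phi$. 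This construction needs no injectivity, and (b) then follows by quoting (a) for the direction you could not close.

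In (c) you have the two inclusions reversed. The containment $\ls(\phi)\subseteq\{\qu,\oqu\}$ does \emph{not} require SVEP: the paper writes down the explicit right-regular function $f(\pu)=\phi\bigl(\qu^2-2\mathrm{Re}(\pu)\qu+|\pu|^2\bigr)^{-1}$ on $\quat\setminus\{\qu,\oqu\}$ and checks $R_\pu(A)f(\pu)=\phi$ using $A\phi=\phi\qu$. It is the inclusion $\qu\in\ls(\phi)$ that uses SVEP, and the argument is short: if $\qu\in\lr(\phi)$ with $R_\pu(A)f(\pu)=\phi$ on $U$, then $R_\pu(A)\bigl(R_\qu(A)f(\pu)\bigr)=R_\qu(A)\phi=0$ on $U$, so SVEP forces $R_\qu(A)f\equiv0$, whence $\phi=R_\qu(A)f(\qu)=0$, a contradiction. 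Your slice-localization plan is unnecessary. Finally, once (c) is in hand, (d) is one line: $\sigma_{pS}(A)\subseteq\bigcup_\phi\ls(\phi)=\sus(A)$ by (c) and (b), so $\sigma_S(A)=\sigma_{pS}(A)\cup\sus(A)=\sus(A)$; your attempt to argue directly that SVEP forbids surjective-not-injective $R_\pu(A)$ is both harder and incomplete as written.
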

\begin{proof}
(a)~ By an obvious translation argument, it is suffices to consider the case where $\pu=0$. Thus $R_{\pu}(A)=R_0(A)=A^2$. Since $\pu=0\in\quat\setminus\sus(A)$, $A^2$ is surjective, and hence $A$ is surjective. Then by the open mapping theorem, there exists $c>0$ such that for every $u\in\vr$, there is some $v\in\vr$ such that $Av=u$ and $c\|v\|\leq\|u\|$. Let $\phi\in\vr$ be arbitrary. Starting with $\phi_0=\phi$ we obtain, by induction, a sequence $\{\phi_n\}\subseteq\vr$ such that $A\phi_n=\phi_{n-1}$ and $c\|\phi_n\|\leq\|\phi_{n-1}\|$, for all $n\in\N$. Therefore, since $\|\phi_n\|\leq c^{-n}\|\phi\|$, we conclude that, for any fixed $\qu\in B_\quat(0,c)$, the series
$$\psi_\qu=\sum_{n=0}^\infty\phi_{n+1}\oqu^n$$
converges locally uniformly. If we do the same for the vector $\psi_\qu\in\vr$, we can obtain another sequence $\{\psi_n\}\subseteq\vr$ such that $\psi_0=\psi_\qu$, $A\psi_n=\psi_{n-1}$ with $d\|\psi_n\|\leq\|\psi_{n-1}\|$ for all $n\in\N$. Define
$$f(\qu)=\sum_{n=0}^\infty\psi_{n+1}{\qu}^{n},$$
which converges locally uniformly on the open ball $B_\quat(0,r)$, where $r=\min\{c,d\}$, and hence $f$ is right-slice regular in $\qu$. We have
$$(A-\qu\IV)f(\qu)=\sum_{n=0}^\infty\psi_n\qu^n-\sum_{n=0}^\infty\psi_{n+1}\qu^{n+1}=\psi_0=\psi_\qu.$$
Therefore
\begin{eqnarray*}
R_\qu(A)f(\qu)&=&(A-\oqu\IV)(A-\qu\IV)f(\qu)\\
&=&(A-\oqu\IV)\psi=(A-\oqu\IV)\sum_{n=0}^\infty\phi_{n+1}\oqu^n\\
&=&\sum_{n=0}^\infty\phi_n\oqu^n-\sum_{n=0}^\infty\phi_{n+1}\oqu^{n+1}=\phi_0=\phi.
\end{eqnarray*}
That is, $R_\qu(A)f(\qu)=\phi$ for all $\qu\in B_\quat(0,r)$, hence $\phi\in\mathcal{X}(\quat\setminus B_\quat(0,r))$, and therefore $\vr=\mathcal{X}(\quat\setminus B_\quat(0,r)).$\\
(b)~For arbitrary $\qu\in\quat$, to prove the equality in (b) it is enough to show that $R_\qu(A)$ is surjective if and only if $\qu\in\lr(\phi)$ for every $\phi\in\vr$. Suppose $\qu\in\lr(\phi)$ and $\phi\in\vr$. Then there is a right regular function on a neighborhood of $U$ of $\qu$, $f: U\longrightarrow\vr$ such that $R_\pu(A)f(\pu)=\phi$ for all $\pu\in U$. Thus $R_\qu(A)$ is surjective. Conversely suppose that $R_\qu(A)$ is surjective Then $\qu\in\quat\setminus\sus(A)$. Therefore, from part (a) $R_\qu(A)(\vr)=\vr=\mathcal{X}_A(\quat\setminus B_\quat(\qu,r))$. Therefore, there is a right-slice regular function $f:B_\quat(\qu,r)\longrightarrow\vr$ such that, for every $\phi\in\vr$, $R_\qu(A)f(\qu)=\phi$ for all $\qu\in B_\quat(\qu,r)$. Hence $\qu\in\lr(\phi)$.\\
(c)~Suppose that $\qu\in\sigma_{pS}(A)$. Then there is a nonzero $\phi\in\vr$ such that $R_\qu(A)\phi=0$. Since the right eigenvalues coincide with the point spectrum, we also have $A\phi=\phi\qu$. Define $f:\quat\setminus\{\qu,\oqu\}\longrightarrow\vr$ by
$$f(\pu)=\phi(\qu^2-2\text{Re}(\pu)\qu+|\pu|^2)^{-1},\quad\text{for all}~~\pu\in\quat\setminus\{\qu,\oqu\}.$$
Then $f$ is right-slice regular on $\quat\setminus\{\qu,\oqu\}$ (see \cite{Jo}, page 81) and satisfies, as $A\phi=\phi\qu$,
\begin{eqnarray*}
	R_\pu(A)f(\pu)&=&(A^2-2\text{Re}(\pu)A-|\pu|^2)\phi(\qu^2-2\text{Re}(\pu)\qu+|\pu|^2)^{-1}\\
	&=&\phi(\qu^2-2\text{Re}(\pu)\qu+|\pu|^2)(\qu^2-2\text{Re}(\pu)\qu+|\pu|^2)^{-1}=\phi\quad\text{for all}~\pu\in\quat\setminus\{\qu\}.
	\end{eqnarray*}
Therefore $\quat\setminus\{\qu,\oqu\}\subseteq\lr(\phi)$, and hence $\ls(\phi)\subseteq\{\qu,\oqu\}$. For the other inclusion, assume that $\qu\in\lr(\phi)$. Then there exists a right-slice regular function $f:U\longrightarrow\vr$ on some open neighborhood of $\qu$ such that $R_\pu(A)f(\pu)=\phi$ for all $\pu\in U$. It follows that
$$R_\pu(A)R_\qu(A)f(\pu)=R_\qu(A)R_\pu(A)f(\pu)=R_\qu(A)\phi=0,\quad\text{for all}~\pu\in U.$$ Therefore by SVEP, $R_\qu(A)f(\pu)=0$, for all $\pu\in U$. In particular, $0\not=\phi=R_\qu(A)f(\qu)=0$, which is a contradiction. Hence, $\qu\in\ls(\phi)$.\\
(d)~~Suppose $A$ has SVEP, then by parts (b) and (c), we have
$$\sigma_{pS}(A)\subseteq\bigcup\{\ls(\phi)~|~\phi\in\vr\}=\sus(A).$$
Therefore by equation \ref{sue1}, we get 
\begin{equation}\label{LE2}\sigma_S(A)=\sus(A).\end{equation}
If $A^\dagger$ has SVEP, then by equation \ref{LE2} and proposition \ref{su1}, we have
$$\sigma_S(A)=\sigma_S(A^\dagger)=\sus(A^\dagger)=\apo(A).$$
\end{proof}
The following proposition relates isolated points of various spectra to SVEP.
\begin{proposition}\label{SV2}
Let $A\in\B(\vr)$.
\begin{enumerate}
	\item [(a)]  If $\sigma_{pS}(A)$ does not cluster at $\qu_0\in\quat$, then $A$ has SVEP at $\qu_0$.
	\item[(b)] If $\apo(A)$ does not cluster at  $\qu_0\in\quat$, then $A$ has SVEP at $\qu_0$.
	\item[(c)] If $\sus(A)$ does not cluster at  $\qu_0\in\quat$, then $A^\dagger$ has SVEP at $\qu_0$.
\end{enumerate}
\end{proposition}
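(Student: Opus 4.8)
The plan is to prove (a) directly and then read off (b) and (c) from the containments and duality already established in the paper, rather than repeating the argument three times.

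For (a): the hypothesis that $\sigma_{pS}(A)$ does not cluster at $\qu_0$ yields, by the definition of a cluster point, some $\eps>0$ with $\sigma_{pS}(A)\cap B_\quat(\qu_0,\eps)\subseteq\{\qu_0\}$. I would then fix an arbitrary open neighbourhood $U\subseteq\quat$ of $\qu_0$ and a continuous right slice-regular $f:U\longrightarrow\vr$ with $R_\qu(A)f(\qu)=0$ for every $\qu\in U$. The key observation is that for each $\qu$ in the punctured set $(B_\quat(\qu_0,\eps)\cap U)\setminus\{\qu_0\}$ one has $\qu\notin\sigma_{pS}(A)$, hence $\kr(R_\qu(A))=\{0\}$, so $R_\qu(A)f(\qu)=0$ forces $f(\qu)=0$. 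Continuity of $f$ at $\qu_0$ then gives $f(\qu_0)=0$ as well, so $f$ vanishes on the whole neighbourhood $B_\quat(\qu_0,\eps)\cap U$ of $\qu_0$.

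For (b) and (c) no new work is needed. Since $\sigma_{pS}(A)\subseteq\apo(A)$ by Proposition \ref{P3}, if $\apo(A)$ does not cluster at $\qu_0$ then neither does $\sigma_{pS}(A)$, so (a) immediately gives SVEP of $A$ at $\qu_0$; this is (b). For (c), Proposition \ref{dag} gives $A^\dagger\in\B(\vr)$ and Proposition \ref{su1}(b) gives $\sus(A)=\apo(A^\dagger)$, so the hypothesis of (c) is exactly the hypothesis of (b) applied to the operator $A^\dagger$, and (b) then yields that $A^\dagger$ has SVEP at $\qu_0$.

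The only genuinely delicate point is the final step of (a): upgrading "$f\equiv0$ on a ball around $\qu_0$" to "$f\equiv0$ on all of $U$", which is what Definition \ref{SVEP} literally demands, since slice-regular functions do not obey the naive identity theorem. I expect to close this either by invoking the identity principle for slice-regular functions (via the splitting lemma and the representation formula on an axially symmetric slice domain) or, more economically, by observing that in every application of SVEP made in this paper one may take $U$ to be a ball or an axially symmetric slice domain, on which the identity principle does apply. As a consistency check on the statement I would also note that, since $R_\qu(A)$ depends on $\qu$ only through $\text{Re}(\qu)$ and $|\qu|$, the sets $\sigma_{pS}(A)$, $\apo(A)$ and $\sus(A)$ are all axially symmetric, so "does not cluster at $\qu_0$" is a constraint of exactly the shape one expects.
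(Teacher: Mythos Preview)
Your argument is essentially the paper's own proof: part (a) is handled exactly as you describe (injectivity of $R_\qu(A)$ on a punctured neighbourhood forces $f=0$ there, and continuity fills in $\qu_0$), and (b), (c) are reduced to (a) via $\sigma_{pS}(A)\subseteq\apo(A)$ and $\sus(A)=\apo(A^\dagger)$ respectively. Regarding your ``delicate point,'' the paper does not invoke any identity principle for slice-regular functions but simply writes ``obviously we may assume that $V\subseteq U$'' (with $U$ the non-clustering neighbourhood and $V$ the test neighbourhood from Definition~\ref{SVEP}), thereby concluding $f\equiv0$ on all of $V$ directly; so the subtlety you flag is not addressed by the paper either, and your treatment is in fact more careful on this point.
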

\begin{proof}
(a)~Suppose that $\sigma_{pS}(A)$ does not cluster at $\qu_0$. Then there exists a neighborhood $U$ of $\qu_0$ such that $R_\qu(A)$ is injective for all $\qu\in U$ and $\qu\not=\qu_0$. Let $f:V\longrightarrow\vr$ be a right-slice regular function defined on another neighborhood of $\qu_0$ for which the equation $R_\qu(A)f(\qu)=0$ holds for every $\qu\in V$. Obviously we may assume that $V\subseteq U$. Then $f(\qu)\in\kr(R_\qu(A))=\{0\}$ for all $\qu\in V$ and $\qu\not=\qu_0$. Hence $f(\qu)=0$  for all $\qu\in V$ and $\qu\not=\qu_0$. From the continuity of $f$ at $\qu_0$ we conclude that $f(\qu_0)=0$. Hence $f=0$ on $V$, and therefore $A$ has SVEP at $\qu_0$.\\
(b)~Suppose that $\apo(A)$ does not cluster at $\qu_0$. Then there is a neighborhood $U$ of $\qu_0$ such that $U\setminus\{\qu_0\}\cap\apo(A)=\emptyset$. Since, by proposition \ref{P3}, $\sigma_{pS}(A)\subseteq\apo(A)$, we have $U\setminus\{\qu_0\}\cap\sigma_{pS}(A)=\emptyset$. Therefore, from the proof of part (a), $A$ has SVEP.\\
(c)~Since, by proposition \ref{su1}, $\sus(A)=\apo(A^\dagger)$. Therefore, from part (b) $A^\dagger$ has SVEP.
\end{proof}
\begin{remark}
	\begin{enumerate}
		\item [(a)] From proposition \ref{SV2} every operator $A\in\B(\vr)$ has SVEP at an isolated point of the $S$-spectrum.
		\item[(b)] Obviously $A\in\B(\vr)$ has SVEP at every $\qu\in\rho_S(A)$.
	\end{enumerate}
\end{remark}
The following proposition gathers some elementary properties of local spectral subspaces. 
\begin{proposition}\label{1.2.16}
For every operator $A\in\B(\vr)$ and every set $F\subseteq\quat$, the following assertions hold:
\begin{enumerate}
	\item [(a)] $X_A(F)$ is an $A$-hyper-invariant right linear subspace of $\vr$;
	\item[(b)] $R_\qu(A)X_A(F)\subseteq X_A(F)$ for all $\qu\in\quat\setminus F$;
	\item[(c)] if $Y$ is an $A$-invariant closed right linear subspace of $\vr$ with the property that $\sigma_S(A|_Y)\subseteq F$, then $Y\subseteq X_A(F)$;
	\item[(d)] $X_A(F)=X_A(F\cap\sigma_S(A))$.
\end{enumerate}
\end{proposition}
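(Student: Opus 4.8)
The plan is to verify each of the four assertions more or less directly from the definitions, leaning on the local S-resolvent set and the properties of slice-regular functions already established. Throughout, recall that for $\phi \in \vr$ the local S-resolvent set $\lr(\phi)$ is the union of all open $U \subseteq \quat$ admitting a continuous right slice-regular $f : U \to \vr$ with $R_\pu(A)f(\pu) = \phi$ on $U$, and $\ls(\phi) = \quat \setminus \lr(\phi)$.

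For (a), first I would show $X_A(F)$ is a right linear subspace. Given $\phi, \psi \in X_A(F)$, pick open neighborhoods and slice-regular solutions $f, g$ of $R_\pu(A)f = \phi$, $R_\pu(A)g = \psi$ on $\lr(\phi)$, $\lr(\psi)$ respectively; on the intersection $f+g$ solves $R_\pu(A)(f+g) = \phi + \psi$, and since slice-regularity is preserved under addition (noted after Definition \ref{D2}), this gives $\ls(\phi+\psi) \subseteq \ls(\phi) \cup \ls(\psi) \subseteq F$. For right scalar multiples $\phi\qu$, the function $f(\pu)\qu$ is still slice right regular (monomials $\phi\qu^n$ are, and this is the $n=0$ case composed with the linear structure), and $R_\pu(A)(f(\pu)\qu) = (R_\pu(A)f(\pu))\qu = \phi\qu$ because $R_\pu(A)$ is right linear; hence $\ls(\phi\qu) \subseteq \ls(\phi) \subseteq F$. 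For hyper-invariance, let $B \in \B(\vr)$ commute with $A$; then $B$ commutes with $R_\pu(A)$ (a real polynomial in $A$), so if $R_\pu(A)f(\pu) = \phi$ on $U$ then $R_\pu(A)(Bf(\pu)) = B\phi$ on $U$, and $Bf$ is slice-regular since $B$ is bounded right linear, giving $\ls(B\phi) \subseteq \ls(\phi) \subseteq F$; thus $B(X_A(F)) \subseteq X_A(F)$.

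For (b), fix $\qu \in \quat \setminus F$ and $\phi \in X_A(F)$, so $\ls(\phi) \subseteq F$, i.e. $\qu \in \lr(\phi)$. Pick an open neighborhood $U$ of $\qu$ and a slice-regular $f : U \to \vr$ with $R_\pu(A)f(\pu) = \phi$ on $U$. Since $R_\qu(A)$ commutes with $R_\pu(A)$ for all $\pu$ (both real polynomials in $A$), on $U$ we have $R_\pu(A)\big(R_\qu(A)f(\pu)\big) = R_\qu(A)\phi$, and $R_\qu(A)f$ is slice-regular; hence $U \subseteq \lr(R_\qu(A)\phi)$, and in particular this holds at every point of $\lr(\phi) \supseteq \quat \setminus F$. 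One must be slightly careful here: the equation $R_\pu(A)(R_\qu(A)f(\pu)) = R_\qu(A)\phi$ holds on whatever neighborhood witnesses $\pu \in \lr(\phi)$, so patching over $\lr(\phi)$ shows $\lr(\phi) \subseteq \lr(R_\qu(A)\phi)$, whence $\ls(R_\qu(A)\phi) \subseteq \ls(\phi) \subseteq F$, so $R_\qu(A)\phi \in X_A(F)$.

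For (c), let $Y$ be $A$-invariant, closed, with $\sigma_S(A|_Y) \subseteq F$; note $A|_Y \in \B(Y)$ since $Y$ is closed. For $\phi \in Y$ and any $\qu \in \quat \setminus F \subseteq \rho_S(A|_Y)$, the operator $R_\qu(A|_Y)$ is invertible on $Y$, and $\qu \mapsto R_\qu(A|_Y)^{-1}$ is slice-regular (Remark \ref{LR1}(b) applied to $A|_Y$); setting $f(\qu) = R_\qu(A|_Y)^{-1}\phi$ on the open set $\quat \setminus F$ gives a slice-regular solution of $R_\qu(A)f(\qu) = \phi$ valued in $Y \subseteq \vr$, so $\quat \setminus F \subseteq \lr(\phi)$, i.e. $\ls(\phi) \subseteq F$; hence $Y \subseteq X_A(F)$. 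For (d), the inclusion $X_A(F \cap \sigma_S(A)) \subseteq X_A(F)$ is trivial since $F \cap \sigma_S(A) \subseteq F$. Conversely, if $\phi \in X_A(F)$ then $\ls(\phi) \subseteq F$, and by Remark \ref{LR1}(b), $\ls(\phi) \subseteq \sigma_S(A)$ always, so $\ls(\phi) \subseteq F \cap \sigma_S(A)$, i.e. $\phi \in X_A(F \cap \sigma_S(A))$. The main obstacle I anticipate is the patching argument in (b): one has only local slice-regular solutions on a cover of $\lr(\phi)$, and the conclusion $\lr(\phi) \subseteq \lr(R_\qu(A)\phi)$ should be read pointwise — each point of $\lr(\phi)$ has a witnessing neighborhood on which $R_\qu(A)f$ is a valid slice-regular solution for $R_\qu(A)\phi$ — rather than needing a single global function; no continuation or uniqueness (SVEP) is required for these four statements.
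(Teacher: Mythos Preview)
Your proposal is correct and follows essentially the same route as the paper. One simplification worth noting: the paper dispatches (b) in a single line as an immediate consequence of the hyper-invariance established in (a), since $R_\qu(A)$ commutes with $A$ for every $\qu\in\quat$, so your separate patching argument for (b) is unnecessary (and indeed the restriction $\qu\notin F$ in the statement is superfluous).
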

\begin{proof}
(a)~We have the right-slice regular function $f:\quat\longrightarrow\vr$ defined by $f(\qu)=0$ for all $\qu\in\quat$ such that $R_\qu(A)f(\qu)=0$ for all $\qu\in\quat$. Thus $\lr(0)=\quat$ and hence $\ls(0)=\emptyset\subseteq F$. Therefore $0\in X_A(F)$. Let $\phi, \psi\in X_A(F)$. If $\qu\in\lr(\phi)\cap\lr(\psi)$, then there are slice-regular functions $f$ and $g$ on some open neighborhood $U$ and $V$ of $\qu$, $f:U\longrightarrow\vr$ and $g:V\longrightarrow\vr$, such that $R_\pu(A)f(\pu)=\phi$ for all $\pu\in U$ and $R_\pu(A)g(\pu)=\psi$ for all $\qu\in V$. Hence $f+g:U\cap V\longrightarrow\vr$, a right-slice regular function, such that $R_\pu(A)(f+g)(\pu)=\phi+\psi$ for all $\pu\in U\cap V$. Thus $\qu\in\lr(\phi+\psi)$ and hence $\lr(\phi)\cap\lr(\psi)\subseteq\lr(\psi+\phi)$. Therefore $\ls(\phi+\psi)\subseteq\ls(\phi)\cup\ls(\psi)\subseteq F$, which implies $\phi+\psi\in X_A(F)$. Let $s\in\quat\setminus\{0\}$ and $\qu\in\lr(\phi s)$, then there is an open neighborhood $U$ of $\qu$ and a right-slice regular function $f:U\longrightarrow\vr$ such that $R_\pu(A)f(\pu)=\phi s$ for all $\pu\in U$. Then $R_\pu(A)f(\pu)s^{-1}=\phi$ for all $\pu\in U$ and $f(\pu)s^{-1}$ is right-slice regular in the variable $\pu$, thus $\qu\in\lr(\phi)$. Therefore $\lr(\phi s)\subseteq\lr(\phi)$ and $\ls(\phi)\subseteq\ls(\phi s)$. If $\qu\in\lr(\phi)$, then there exist an open neighborhood $U$ of $\qu$ and a right-slice regular function $f:U\longrightarrow\vr$ such that $R_\pu(A)f(\pu)=\phi$ for all $\pu\in U$. Define the function $sf:U\longrightarrow\vr$ by $(sf)(\pu)=f(\pu)s$. Then $sf$ is right-slice regular on $U$ and $R_\pu(A)(sf)(\qu)=\phi s$. Hence $\qu\in\lr(\phi s)$. Therefore $\lr(\phi)\subseteq\lr(\phi s)$ and $\ls(\phi s)\subseteq\ls(\phi)$. Thus we have $\ls(\phi)=\ls(\phi s)\subseteq F$. That is $\phi s\in X_A(F)$. Therefore $X_A(F)$ is a right linear subspace of $\vr$.\\
For hyper-invariance, let $B\in\B(\vr)$ commute with $A$. Let $\phi\in X_A(F)$. If $\qu\not\in\ls(\phi)$ then there is an open neighborhood $U$ of $\qu$ and a right-slice regular function $f:U\longrightarrow\vr$ such that $R_\pu(A)f(\pu)=\phi$ for all $\pu\in U$. Now $B\circ f:U\longrightarrow\vr$ is right-slice regular and $R_\pu(A)(S\circ f)(\pu)=SR_\pu(A)f(\pu)=S\phi$ for all $\pu\in U$. Therefore $\qu\in\lr(S\phi)$ and $\qu\not\in\ls(S\phi)$. Hence $\ls(S\phi)\subseteq\ls(\phi)\subseteq F$. That is, $S\phi\in X_A(F)$, and therefore $S(X_A(F))\subseteq X_A(F)$.\\
(b)~~Since $R_\qu(A)$ and $A$ commutes, it is straight forward from part(a).\\
(c)~~Let $\phi\in Y$, then we have $R_\qu(A)R_\qu(A|_Y)^{-1}\phi=\phi$ for all $\qu\in\rho_S(A|_Y)$, where $A|_Y\in\B(Y)=\{A~:~A:Y\longrightarrow Y\text{~~is bounded right}~~ \quat-\text{linear operator}\}$, and hence  $R_\qu(A)R_\qu(A|_Y)^{-1}\phi=\phi$ for all $\qu\in\quat\setminus F$. That is, $f:\quat\setminus F\longrightarrow\vr$ by $f(\qu)=R_\qu(A|_Y)^{-1}\phi$ is a right-slice regular function such that $R_\qu(A)f(\qu)=\phi$ for all $\qu\in\quat\setminus F$. Therefore, $\qu\in\lr(\phi)$ for all $\qu\in\quat\setminus F$, and hence $\ls(\phi)\subseteq F$. Thus $\phi\in X_A(F)$, and which yields $Y\subseteq X_A(F)$.\\
(d)~~Clearly $X_A(F\cap\sigma_S(A))\subseteq X_A(F)$. Conversely, let $\phi\in X_A(F)$, then $\ls(\phi)\subseteq F$, and since $\ls(\phi)\subseteq\sigma_S(A)$, we get $\ls(\phi)\subseteq F\cap\sigma_S(A)$. Thus $\phi\in X_A(F\cap\sigma_S(A))$.
\end{proof}
\section{Kato S-spectrum in $\vr$}
In the complex setting, among the many concepts dealt with in Kato's extensive treatment of perturbation theory \cite{Kato} there is a very important part of the spectrum called the Kato spectrum. Here we duplicate the complex definition given in \cite{Ai,La} to quaternions.
\begin{definition}\label{DK1}
For $A\in\B(\vr)$, the Kato S-resolvent set is defined as
$$\rho_{ka}^S(A)=\{\qu\in\quat~~|~~\ra(R_\qu(A))~\text{is closed}~~ \text{and}~\kr(R_\qu(A))\subseteq R_\qu(A)^\infty(\vr)\}$$
and the Kato S-spectrum is defined as $\sigma_{ka}^S(A)=\quat\setminus\rka(A)$.
\end{definition}
\begin{remark}\label{KR1}Let $A\in\B(\vr)$.
\begin{enumerate}
	\item [(a)] From theorem \ref{HT1} we can see that:\\
	$\qu\in\rka(A)$ if and only if $\ra(R_\qu(A))$ is closed and $R_\qu(A)$ satisfies one of the equivalent conditions of theorem \ref{HT1}.
	That is
	$$\rka(A)=\{\qu\in\quat~|~R_\qu(A)~~\text{is semi-regular}\}.$$
	\item[(b)] In the complex literature the Kato spectrum is sometimes referred to as semi-regular spectrum. For example in \cite{La} it is called Kato spectrum while in \cite{Ai} it is referred as semi-regular spectrum.
	\item[(c)] Let $\qu\in\rho_S(A)$, then $R_\qu(A)$ has an inverse in $\B(\vr)$. Therefore, by the bounded inverse theorem, $R_\qu(A)$ is bounded below, and hence by proposition \ref{HP4}, $R_\qu(A)$ is semi-regular. Thus $\qu\in\rka(A).$ That is, $\rho_S(A)\subseteq\rka(A)$, and hence $\ska(A)\subseteq\sigma_S(A)$.
\end{enumerate}
\end{remark}
\begin{proposition}\label{KP0}
	Let $A\in\B(\vr)$, then $\quat\setminus\apo(A)\subseteq\rka(A)$.
\end{proposition}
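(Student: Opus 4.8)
The plan is to reduce the statement to the characterisation of the approximate $S$-point spectrum in Proposition~\ref{P4} together with the description of the Kato $S$-resolvent set in Remark~\ref{KR1}. First I would fix $\qu\in\quat\setminus\apo(A)$. By Proposition~\ref{P4} this is equivalent to saying that $\kr(R_\qu(A))=\{0\}$ and $\ra(R_\qu(A))$ is closed; equivalently, by Definition~\ref{BBD} and Theorem~\ref{NT1}, $R_\qu(A)$ is bounded below.

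The second step is to invoke Proposition~\ref{HP4}: a bounded below operator is semi-regular. Hence $R_\qu(A)$ is semi-regular, and by Remark~\ref{KR1}(a) this says precisely that $\qu\in\rka(A)$. Alternatively, one can argue directly from Definition~\ref{DK1}: since $\kr(R_\qu(A))=\{0\}$, the inclusion $\kr(R_\qu(A))\subseteq R_\qu(A)^\infty(\vr)$ holds trivially (the zero subspace is contained in every subspace), while $\ra(R_\qu(A))$ is closed by the hypothesis $\qu\notin\apo(A)$; so again $\qu\in\rka(A)$ by the very definition of the Kato $S$-resolvent set.

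There is no substantive obstacle here; the one point worth making explicit is that the hyper-range condition $\kr(R_\qu(A))\subseteq R_\qu(A)^\infty(\vr)$ in the definition of $\rka(A)$ becomes vacuous once the kernel is trivial, so injectivity together with closed range already delivers membership in $\rka(A)$. Since $\qu$ was an arbitrary point of $\quat\setminus\apo(A)$, we conclude $\quat\setminus\apo(A)\subseteq\rka(A)$, equivalently $\ska(A)\subseteq\apo(A)$.
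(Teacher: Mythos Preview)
Your proof is correct and matches the paper's argument: the paper also fixes $\qu\notin\apo(A)$, invokes Proposition~\ref{P4} to obtain $\kr(R_\qu(A))=\{0\}$ and $\ra(R_\qu(A))$ closed, and concludes $\qu\in\rka(A)$ directly from the definition. Your added observation that the hyper-range condition is vacuous when the kernel is trivial, and the alternative route through Proposition~\ref{HP4}, simply make explicit what the paper leaves implicit.
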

\begin{proof}
	Let $\qu\in\quat\setminus\apo(A)$, then by proposition \ref{P4}, $\kr(R_\qu(A))=\{0\}$ and $\ra(R_\qu(A))$ is closed. Therefore, $\qu\in\rka(A).$
\end{proof}
\begin{remark}\label{KR0}
	Let $\qu=q_0+q_1i+q_2j+q_3k\in\quat$ and $A\in\B(\vr)$. Denote $\beta(A,\qu)=\gamma(A)^2-2|q_0|\gamma(A)-|\qu|^2$ and $\beta(\qu)=|q_0|+\sqrt{2q_0^2+|\qu|^2}$, then we have
	\begin{eqnarray*}
		\beta(A,\qu)>0&\Leftrightarrow&\gamma(A)^2-2|q_0|\gamma(A)-|\qu|^2>0\\
			&\Leftrightarrow& (\gamma(A)-|q_0|)^2>|q_0|^2+|\qu|^2\\
			&\Leftrightarrow&\gamma(A)>|q_0|+\sqrt{|q_0|^2+|\qu|^2}=\beta(\qu).
		\end{eqnarray*}
	Also note that $\gamma(A)>\beta(\qu)$ implies $\gamma(A)>|q_0|$ also $\beta(\qu)>0$ if $\qu\not=0$.
\end{remark}
\begin{proposition}\label{KP1}Let $A\in\B(\vr)$ and $\beta(\qu)$ is as in remark \ref{KR0}, then
	\begin{enumerate}
		\item [(a)] $A$ is surjective (respectively, bounded below) if and only if $A^\dagger$ is bounded below (respectively, surjective).
		\item[(b)] if $A$ is bounded below (respectively, surjective) then $R_\qu(A)$ is bounded below (respectively, surjective) for each $\qu\in\quat$ that satisfies $\gamma(A)>\beta(\qu)$.
	\end{enumerate}
\end{proposition}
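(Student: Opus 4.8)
The plan is to prove (a) first, since (b) should essentially reduce to it plus a perturbation estimate. For part (a), recall that by Definition \ref{BBD} and Proposition \ref{BBP}, $A$ is bounded below iff $A$ is injective with closed range. By Proposition \ref{IP30}, $\ra(A)^\perp=\kr(A^\dagger)$ and $\kr(A)=\ra(A^\dagger)^\perp$, and by Proposition \ref{NP1}, $\ra(A)$ is closed iff $\ra(A^\dagger)$ is closed. So, assuming $A$ surjective: then $\ra(A)=\vr$ is closed, hence $\ra(A^\dagger)$ is closed, and $\kr(A^\dagger)=\ra(A)^\perp=\vr^\perp=\{0\}$, so $A^\dagger$ is injective with closed range, i.e.\ bounded below. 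Conversely, if $A^\dagger$ is bounded below, then $\kr(A^\dagger)=\{0\}$ and $\ra(A^\dagger)$ is closed; hence $\ra(A)$ is closed and $\ra(A)=\overline{\ra(A)}=\kr(A^\dagger)^\perp=\{0\}^\perp=\vr$, so $A$ is surjective. The bracketed statement (bounded below $\iff$ adjoint surjective) follows by applying what was just proved to $A^\dagger$ in place of $A$, together with $A^{\dagger\dagger}=A$.

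For part (b), suppose $A$ is bounded below, and fix $\qu=q_0+q_1i+q_2j+q_3k$ with $\gamma(A)>\beta(\qu)$; by Remark \ref{KR0} this is equivalent to $\beta(A,\qu)=\gamma(A)^2-2|q_0|\gamma(A)-|\qu|^2>0$, and it also forces $\gamma(A)>|q_0|$. Since $A$ is bounded below, $\kr(A)=\{0\}$, so $\text{dist}(\phi,\kr(A))=\|\phi\|$ and the definition of $\gamma(A)$ gives $\|A\phi\|\geq\gamma(A)\|\phi\|$ for all $\phi\in\vr$; iterating, $\|A^2\phi\|\geq\gamma(A)^2\|\phi\|$. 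Now write $R_\qu(A)=A^2-2q_0A+|\qu|^2\Iop$ and estimate, for any $\phi\in\vr$,
$$\|R_\qu(A)\phi\|\geq\|A^2\phi\|-2|q_0|\,\|A\phi\|-|\qu|^2\|\phi\|.$$
The obstacle here is that the middle term $\|A\phi\|$ is only bounded \emph{below} by $\gamma(A)\|\phi\|$, and we need an upper bound; so I would instead argue on a complex slice, or more cleanly exploit the factorization $R_\qu(A)=(A-\qu\Iop)(A-\oqu\Iop)=(A-\oqu\Iop)(A-\qu\Iop)$. Since $A$ is bounded below, for $\phi\in\vr$ we have $\|(A-\oqu\Iop)\phi\|\geq\|A\phi\|-|\qu|\|\phi\|$ is again the wrong direction; the right move is to observe that $\gamma(A)>|q_0|$ lets one apply the standard perturbation fact that if $A$ is bounded below with lower bound $\gamma(A)$ then $A-\mu\Iop$ (interpreted via the slice decomposition) remains bounded below for $|\mu|<\gamma(A)$, and then compose. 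Concretely, choosing $I\in\mathbb{S}$ with $\qu=x+yI$ and working in $L_I$, the operators $A-\qu\Iop$ and $A-\oqu\Iop$ act and one checks $\|(A-\qu\Iop)\phi\|\geq(\gamma(A)-|q_0|-\sqrt{q_0^2+|\qu|^2}+\cdots)$-type bounds; the cleaner bookkeeping is to use $\|R_\qu(A)\phi\|\|\phi\|\geq\langle R_\qu(A)\phi\mid\phi\rangle$ manipulations, but the simplest correct route is the direct quadratic estimate $\|R_\qu(A)\phi\|\geq(\gamma(A)^2-2|q_0|\gamma(A)-|\qu|^2)\|\phi\|=\beta(A,\qu)\|\phi\|$, which requires justifying $\|A^2\phi\|-2|q_0|\|A\phi\|\geq(\gamma(A)^2-2|q_0|\gamma(A))\|\phi\|$; since $t\mapsto t^2-2|q_0|t$ is increasing for $t\geq|q_0|$ and $\|A\phi\|\geq\gamma(A)\|\phi\|\geq|q_0|\|\phi\|$ when $\gamma(A)>|q_0|$, this monotonicity delivers exactly the needed inequality. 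Hence $\|R_\qu(A)\phi\|\geq\beta(A,\qu)\|\phi\|$ with $\beta(A,\qu)>0$, so by Proposition \ref{P4} (condition (c)) $R_\qu(A)$ is bounded below.

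For the surjective case of (b): if $A$ is surjective then by part (a) $A^\dagger$ is bounded below, and $\gamma(A^\dagger)=\gamma(A)>\beta(\qu)$ by Proposition \ref{HP3}(b); so by the bounded-below case just proved, $R_\qu(A^\dagger)=R_{\oqu}(A^\dagger)$-type operator is bounded below. Since $R_\qu(A)^\dagger=R_{\oqu}(A^\dagger)$ and $\beta(\oqu)=\beta(\qu)$, $R_\qu(A)^\dagger$ is bounded below, and then part (a) applied to the bounded operator $R_\qu(A)$ gives that $R_\qu(A)$ is surjective. The main obstacle throughout is the sign/direction issue in the middle cross-term of the quadratic estimate; once the monotonicity observation $\|A\phi\|\geq\gamma(A)\|\phi\|\geq|q_0|\|\phi\|$ combined with the increasing behavior of $t^2-2|q_0|t$ on $[|q_0|,\infty)$ is in place, everything else is routine bookkeeping with the adjoint identities from Propositions \ref{IP30}, \ref{NP1}, \ref{HP3} and the characterization in Proposition \ref{P4}.
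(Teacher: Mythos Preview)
Your proof of (a) is correct and is exactly the standard argument the paper cites.

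For the bounded-below half of (b) you and the paper pursue the same quadratic estimate $\|R_\qu(A)\phi\|\geq\beta(A,\qu)\|\phi\|$, and you are right to flag the sign problem in the cross term: replacing $\|A\phi\|$ by its \emph{lower} bound $\gamma(A)\|\phi\|$ inside $-2|q_0|\,\|A\phi\|$ goes the wrong way (the paper's own displayed chain commits precisely this slip). However, your monotonicity repair is not quite right as phrased: the function $t\mapsto t^2-2|q_0|t$ does not apply directly, since $\|A^2\phi\|$ is not $t^2\|\phi\|$ for $t=\|A\phi\|/\|\phi\|$. The clean fix is to first use $\|A^2\phi\|\geq\gamma(A)\|A\phi\|$, obtaining
\[
\|A^2\phi\|-2|q_0|\,\|A\phi\|\;\geq\;(\gamma(A)-2|q_0|)\,\|A\phi\|,
\]
and then observe that $\gamma(A)>\beta(\qu)=|q_0|+\sqrt{q_0^2+|\qu|^2}\geq 2|q_0|$, so the coefficient is nonnegative and the bound $\|A\phi\|\geq\gamma(A)\|\phi\|$ now goes the right way, yielding $(\gamma(A)^2-2|q_0|\gamma(A))\|\phi\|$ as desired. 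With this adjustment your argument is complete.

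For the surjective half of (b) your duality route---apply part~(a), use $\gamma(A^\dagger)=\gamma(A)$ from Proposition~\ref{HP3}(b) and $R_\qu(A)^\dagger=R_\qu(A^\dagger)$, invoke the bounded-below case for $A^\dagger$, then apply (a) again---is correct and is genuinely more careful than the paper, which disposes of this case with the single word ``trivially''. That is too quick: surjectivity of $A$ alone certainly does not force $R_\qu(A)$ to be surjective without the hypothesis $\gamma(A)>\beta(\qu)$, so your adjoint argument is the right way to close it.
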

\begin{proof}
	(a)~Proof is exactly as a complex proof. For a complex proof see lemma 1.30 (a) in \cite{Ai}.\\
	(b)~Suppose $A$ is bounded below. Thus, $A$ is injective and $\ra(A)$ is closed. Hence, as $A$ is continuous, $A^2$ is injective and $A^2(\vr)=A(A(\vr))$ is closed. Therefore, from theorem \ref{HT3} and theorem \ref{HT4}, $\gamma(A)>0$ and $\gamma(A^2)>0$. Also from the injectivity of $A$ and $A^2$,
	$$\gamma(A)\text{dist}(\phi, \kr(A))=\gamma(A)\|\phi\|\leq\|A\phi\|,\quad\text{for all}~\phi\in\vr\quad\text{and}$$
	$$\gamma(A^2)\text{dist}(\phi, \kr(A^2))=\gamma(A^2)\|\phi\|\leq\|A^2\phi\|,\quad\text{for all}~\phi\in D(A^2).$$
	We have, for $\phi\in D(A^2)$,
	\begin{eqnarray*}
		\|R_\qu(A)\phi\|&\geq&\|A^2\phi\|-2|\text{Re}(\qu)|\|A\phi\|-|\qu|^2\|\phi\|\\
		&\geq& \gamma(A^2)\|\phi\|-2|\text{Re}(\qu)|\gamma(A)\|\phi\|-|\qu|^2\|\phi\|\\
		&=&(\gamma(A^2)-2|\text{Re}(\qu)|\gamma(A)-|\qu|^2)\|\phi\|\geq\beta(A,\qu)\|\phi\|\quad{\text{by theorem \ref{HT4}}}.
		\end{eqnarray*}
	Hence, if $\gamma(A)>\beta(\qu)$, then, by remark \ref{KR0}, $\beta(A,\qu)>0$. Therefore, $R_\qu(A)$ is bounded below. Trivially, if $A$ is surjective, then $R_\qu(A)$ is surjective.
\end{proof}
\begin{theorem}\label{KT1}
Let $A\in\B(\vr)$ be semi-regular. Then $R_\qu(A)$ is semi-regular for all $\qu\in\quat$ for which $\gamma(A)>\beta(\qu)$, where $\beta(\qu)$ is as in remark \ref{KR0}. Moreover $\rka(A)$ is open and $\ska(A)$ is compact.
\end{theorem}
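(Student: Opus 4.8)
The plan is to reduce everything to the perturbation estimates already available for semi-regular operators. Recall that $A$ semi-regular means $\ra(A)$ is closed and $A$ satisfies the equivalent conditions of Theorem~\ref{HT1}; by Theorem~\ref{AT4} this gives $C(A)=K(A)=A^\infty(\vr)$, and by Theorem~\ref{HT4} $A^n$ is semi-regular with $\gamma(A^n)\geq\gamma(A)^n$ for all $n\in\N$. The key structural fact I would use is that a semi-regular $A$ admits a decomposition on which it acts as a direct sum of a bounded-below part and a surjective part — or, failing that, that one can directly estimate $\|R_\qu(A)\phi\|$ from below modulo the kernel and simultaneously show $\ker(R_\qu(A))\subseteq R_\qu(A)^\infty(\vr)$ for $\qu$ small. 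Concretely: first show that if $\gamma(A)>\beta(\qu)$ then $\ra(R_\qu(A))$ is closed, and second show the hyper-range inclusion $\ker(R_\qu(A))\subseteq R_\qu(A)^\infty(\vr)$; together these two facts say precisely $\qu\in\rka(A)$.

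For the closed-range part I would mimic Proposition~\ref{KP1}(b): write $R_\qu(A)=A^2-2\mathrm{Re}(\qu)A+|\qu|^2\Iop$ and estimate, for $\phi\perp\ker(R_\qu(A))$ (or, more carefully, using $\mathrm{dist}(\phi,\ker)$),
\[
\|R_\qu(A)\phi\|\geq\|A^2\phi\|-2|\mathrm{Re}(\qu)|\,\|A\phi\|-|\qu|^2\|\phi\|\geq\bigl(\gamma(A^2)-2|\mathrm{Re}(\qu)|\gamma(A)-|\qu|^2\bigr)\mathrm{dist}(\phi,\ker),
\]
using $\gamma(A^2)\geq\gamma(A)^2$ from Theorem~\ref{HT4}; the right-hand coefficient is $\beta(A,\qu)>0$ exactly when $\gamma(A)>\beta(\qu)$ by Remark~\ref{KR0}. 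Then $\gamma(R_\qu(A))>0$, so $\ra(R_\qu(A))$ is closed by Theorem~\ref{HT3}. The delicate point is that this estimate only controls $\phi$ modulo $\ker(R_\qu(A))$, so one must argue that the reduced minimum modulus — not the lower bound — is positive; that requires knowing $\ker(R_\qu(A))$ interacts well with the semi-regularity of $A$, which is where the hyper-kernel results of Theorem~\ref{1.3} and the algebraic/analytic core machinery of Section~5 enter. I expect this to be the main obstacle: transferring the complex proof (which factors $R_\qu(A)$ through $C(A)$ and a complement on which $A$ is bounded below) to the quaternionic setting, where the factorization $R_\qu(A)=(A-\qu\Iop)(A-\oqu\Iop)$ involves the non-linear factors $A-\qu\Iop$. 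I would handle this by working with $R_\qu(A)$ as a genuine right-linear operator throughout and invoking Theorem~\ref{1.3}(b), which gives $N^\infty(R_\qu(A))\subseteq(A^2)^\infty(\vr)=A^\infty(\vr)$, to locate the kernels inside the hyper-range.

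For the inclusion $\ker(R_\qu(A))\subseteq R_\qu(A)^\infty(\vr)$ I would combine the previous step with the core identities: since $A^\infty(\vr)=C(A)$ is $R_\qu(A)$-invariant (it commutes with $A$) and $R_\qu(A)$ restricted to $C(A)=K(A)$ is surjective onto $C(A)$ by the defining property $A(C(A))=C(A)$ together with Proposition~\ref{KP1}(b) applied to $A|_{C(A)}$, one gets that every kernel element of $R_\qu(A)$, already lying in $A^\infty(\vr)$ by Theorem~\ref{1.3}(b), lies in $R_\qu(A)^n(\vr)$ for every $n$. Hence $\qu\in\rka(A)$ whenever $\gamma(A)>\beta(\qu)$. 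Finally, for the openness and compactness claims: $\gamma$ is not continuous in general, but the semi-regular locus is characterized by $R_\qu(A)$ being semi-regular (Remark~\ref{KR1}(a)), and the estimate just proved shows that around any $\qu_0\in\rka(A)$ — where $R_{\qu_0}(A)$ is semi-regular — all $\pu$ with $\gamma(R_{\qu_0}(A))>\beta(\pu-\qu_0)$ (suitably interpreted via the translation $A\rightsquigarrow R_{\qu_0}(A)$-type argument) lie in $\rka(A)$; since $\beta(\cdot)\to 0$ as the argument $\to 0$, this ball has positive radius, so $\rka(A)$ is open. Then $\ska(A)=\quat\setminus\rka(A)$ is closed, and since $\ska(A)\subseteq\sigma_S(A)$ by Remark~\ref{KR1}(c) with $\sigma_S(A)$ compact (Proposition~\ref{PP1}), $\ska(A)$ is compact.
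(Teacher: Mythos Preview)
Your treatment of the kernel inclusion $\ker(R_\qu(A))\subseteq R_\qu(A)^\infty(\vr)$ is essentially the paper's: locate $\ker(R_\qu(A))$ inside $A^\infty(\vr)=C(A)$ (you invoke Theorem~\ref{1.3}(b); the paper instead uses $A(\ker(R_\qu(A)))=\ker(R_\qu(A))$ together with Theorems~\ref{AT2} and~\ref{AT4}), then observe that $A|_{C(A)}$ is surjective with $\gamma(A|_{C(A)})\geq\gamma(A)$, so Proposition~\ref{KP1}(b) makes $R_\qu(A)|_{C(A)}$ surjective and hence $C(A)\subseteq C(R_\qu(A))\subseteq R_\qu(A)^n(\vr)$ for all $n$. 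The genuine gap is in the closed-range step. Your displayed inequality cannot be justified: $\|A^2\phi\|\geq\gamma(A^2)\,\mathrm{dist}(\phi,\ker(A^2))$ involves the wrong kernel, and the middle term $-2|\mathrm{Re}(\qu)|\,\|A\phi\|$ carries a minus sign, so a \emph{lower} bound on $\|A\phi\|$ pushes the estimate the wrong way. You flag this as ``the main obstacle'' and gesture at factoring through $C(A)$, but never supply the mechanism. The paper's mechanism is to pass to the \emph{quotient} $\overline{V}=\vr/C(A)$: since $\ker(A)\subseteq C(A)$, the induced operator $\bar A$ is injective, and a direct computation with the quotient norm and $A(C(A))=C(A)$ gives $\gamma(\bar A)\geq\gamma(A)$; thus $\bar A$ is bounded below, Proposition~\ref{KP1}(b) makes $R_\qu(\bar A)$ bounded below for $\gamma(A)>\beta(\qu)$, and one lifts: if $\phi_n\to\phi$ with $\phi_n\in\ra(R_\qu(A))$, then $\bar\phi$ lies in the closed range of $R_\qu(\bar A)$, so $\phi-R_\qu(A)v\in C(A)=R_\qu(A)(C(A))$ for some $v$, and $\phi\in\ra(R_\qu(A))$. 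This quotient-and-lift argument is the missing ingredient in your proposal.

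Your openness argument also has a gap peculiar to the quaternionic setting. The ``translation $A\rightsquigarrow R_{\qu_0}(A)$'' that works in the complex case via $(A-\mu)=(A-\lambda_0)-(\mu-\lambda_0)$ has no analogue here: there is no $s\in\quat$ for which $R_\pu(A)=R_s(R_{\qu_0}(A))$, so applying the first part of the theorem with $R_{\qu_0}(A)$ in place of $A$ does not produce semi-regularity of $R_\pu(A)$ for $\pu$ near $\qu_0$. The paper does not proceed this way; it argues closedness of $\ska(A)$ directly from the condition $\beta(\qu)\geq\gamma(A)$ and continuity of $\beta$, and then obtains compactness from $\ska(A)\subseteq\sigma_S(A)$ via Remark~\ref{KR1}(c) and Proposition~\ref{PP1}.
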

\begin{proof}
First we show that $C(A)\subseteq C(R_\qu(A))$ for all $\qu\in\quat$ with $\gamma(A)>\beta(\qu)$. Let $A_0:C(A)\longrightarrow C(A)$ denote the restriction of $A$ to $C(A)$. Since $A$ is semi-regular, by theorem \ref{AT4}, $C(A)$ is closed. Since $A(C(A))=C(A)$, $A_0$ is surjective. Therefore, by proposition \ref{KP1}, $R_\qu(A_0)$ is surjective for all $\qu\in\quat$ with $\gamma(A_0)>\beta(\qu)$. Thus $R_\qu(A_0)(C(A))=R_\qu(A)(C(A))=C(A)$ for all $\qu\in\quat$ with $\gamma(A_0)>\beta(\qu)$. On the other hand, $A$ is semi-regular, therefore by theorem \ref{HT1}, corollary \ref{HC1} and theorem \ref{HT5}, we have $\kr(A)\subseteq A^\infty(\vr)=C(A)$. This implies, also by theorem \ref{HT3}, $\gamma(A_0)\geq\gamma(A)>0$, 
 \begin{equation}\label{KE1}
 C(A)\subseteq C(R_\qu(A))~~\text{for all}~~\qu\in\quat~~\text{with}~~ \gamma(A)>\beta(\qu).
 \end{equation}
 Moreover, for every $\qu\in\quat\setminus\{0\}$ we have $A(\kr(R_\qu(A)))=\kr(R_\qu(A))$ and $\kr(R_\qu(A))$ is closed, therefore, from theorem \ref{AT2} and theorem \ref{AT4}, we have $\kr(R_\qu(A))\subseteq C(A)$ for all $\qu\in\quat\setminus\{0\}$. We also have $C(R_\qu(A))=R_\qu(A)^n(C(R_\qu(A)))\subseteq R_\qu(A)^n(\vr)$ for all $\qu\in\quat$ and for all $n\in\mathbb{N}$. Therefore, from equation \ref{KE1}, we have, $\text{for each}~~\qu\in\quat\setminus\{0\}$ and for each $n\in\mathbb{N}$,
 \begin{equation}\label{KE2}
 \kr(R_\qu(A))\subseteq C(R_\qu(A))\subseteq R_\qu(A)^n(\vr), ~~\text{with}~~ \gamma(A)>\beta(\qu).
 \end{equation}
Since $A$ is semi-regular, by theorem \ref{HT4}, $A^2$ is semi-regular. Therefore equation \ref{KE2} is valid for $\qu=0$ as well. That is, equation \ref{KE2} is valid for all $\qu\in\quat$ with $ \gamma(A)>\beta(\qu)$.\\
{\em Claim}: $\ra(R_\qu(A))$ is closed for all $\qu\in\quat$ with $\gamma(A)>\beta(\qu)$.\\
If $C(A)=\{0\}$, then as $A$ is semi-regular, by theorems \ref{AT2} and \ref{AT4}, $\kr(A)\subseteq C(A)=\{0\}$. Therefore, by the bounded inverse theorem, $A$ is bounded below, and hence by lemma \ref{KP1} $R_\qu(A)$ is bounded below for all $\qu\in\quat$ with $\gamma(A)>\beta(\qu).$ Thus, by definition, $\ra(R_\qu(A))$ is closed.\\
If $C(A)=\vr$, then $A$ is surjective, therefore, again by proposition \ref{KP1}, so is $R_\qu(A)$.\\
Now consider the case $C(A)\not=\{0\}$ and $C(A)\not=\vr$. Let $\ov=\vr/C(A)$ and let $\overline{A}:\ov\longrightarrow\ov$ be the quotient map defined by $\oa\,\op:=\overline{A\phi}$, where $\op\in\ov$. Clearly $\oa$ is continuous.
If $\oa\,\op=\overline{A\phi}=\overline{0}$, then $A\phi\in C(A)$, thus, by theorem \ref{AT3}, $\phi\in C(A)$ which implies $\op=\overline{0}$. Therefore $\oa$ is injective. Next we prove that $\oa$ is bounded below. To prove it, we only need to show that $\oa$ has closed range. To see this we show the inequality $\gamma(\oa)\geq\gamma(A)$, then, by theorem \ref{HT3}, $\oa$ has closed range. For each $\phi\in\vr$ and each $u\in C(A)$ we have, recalling the fact that $\kr(A)\subseteq C(A)$ and by the definition of the quotient norm,
\begin{eqnarray*}
\|\op\|=\text{dist}(\phi,C(A))&=&\text{dist}(\phi-u, C(A))\\
&\leq&\text{dist}(\phi-u,\kr(A))\leq\frac{1}{\gamma(A)}\|A\phi-Au\|.
\end{eqnarray*}
From the equality $C(A)=A(C(A))$ we obtain that $\displaystyle\|\overline{A\phi}\|=\inf_{u\in C(A)}\|A\phi-Au\|.$ Thus, $\|\op\|\leq\frac{1}{\gamma(A)}\|\overline{A\phi}\|$. That is, $\gamma(A)\leq\frac{\|\overline{A\phi}\|}{\|\op\|}$ for all $\op\in\ov$, from this, as $\oa$ is injective, we get $\gamma(\oa)\geq\gamma(A)$. Hence $\oa$ is bounded below. Therefore, by proposition \ref{KP1}, $R_\qu(\oa)$ is bounded below for all $\qu\in\quat$ with $\gamma(\oa)>\beta(\qu)$ and hence for all $\qu\in\quat$ with $\gamma(A)>\beta(\qu)$. Finally, to show that $\ra(R_\qu(A))$ is closed  for all $\qu\in\quat$ with $\gamma(A)>\beta(\qu)$, let $\{\phi_n\}\subseteq\ra(R_\qu(A))$ be a sequence such that $\phi_n\longrightarrow\phi\in\vr$ as $n\longrightarrow\infty$. Then clearly $\op_n\longrightarrow\op\in\ov$ as $n\longrightarrow\infty$ and $\op_n\in\ra(R_\qu(\oa))$, and this space is closed  for all $\qu\in\quat$ with  $\gamma(A)>\beta(\qu)$, therefore $\op\in\ra(R_\qu(\oa))$. Let $\op=R_\qu(\oa)\overline{v}$ and $v\in\overline{v}\in\ov$. Then $\phi-R_\qu(A)v\in C(A)\subseteq R_\qu(A)(C(A))$  for all $\qu\in\quat$ with $\gamma(A)>\beta(\qu)$. So there exists $u\in C(A)$ such that $\phi=R_\qu(A)(v+u)$, hence $\phi\in\ra(R_\qu(A))$  for all $\qu\in\quat$ with $\gamma(A)>\beta(\qu)$. Therefore, $\ra(R_\qu(A))$ is closed  for all $\qu\in\quat$ with $\gamma(A)>\beta(\qu)$, and, consequently, $R_\qu(A)$ is semi-regular  for all $\qu\in\quat$ with $\gamma(A)>\beta(\qu)$. That is, $\qu\in\rka(A)$  for all $\qu\in\quat$ such that $\gamma(A)>\beta(\qu)$.
 Hence $\qu\in\ska(A)$ if $\qu\in\quat$ satisfies $\beta(\qu)\leq \gamma(A)$. Let $\qu\in\overline{\ska(A)}$, then there exist a sequence $\{\qu_n\}\subseteq\ska(A)$ such that $\qu_n\longrightarrow\qu$ as $n\longrightarrow\infty$. So we have $\beta(\qu_n)\leq \gamma(A)$, hence, as $n\longrightarrow\infty$ we get $\beta(\qu)\leq \gamma(A)$, and therefore $\qu\in\ska(A)$. Thus $\ska(A)$ is closed, consequently, $\rka(A)$ is open. From remark \ref{KR1}, (c) we have $\ska(A)\subseteq\sigma_S(A)$. We know $\sigma_S(A)$ is compact and since a closed subset of a compact set is compact, $\ska(A)$ is compact.
\end{proof}		
\begin{proposition}\label{3.1.4}
	Suppose that the operator $A\in\B(\vr)$ satisfies $\kr(A)\subseteq A^\infty(\vr)$. Then $A$ maps $A^\infty(\vr)$ onto itself, and $\kr(A^m)\subseteq A^\infty(\vr)$ for all $m\in\N$.
\end{proposition}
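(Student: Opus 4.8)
The plan is to derive both assertions directly from the results on hyper-kernels, hyper-ranges and the algebraic core developed earlier, after observing that the hypothesis $\kr(A)\subseteq A^\infty(\vr)$ is one of the equivalent ``semi-regularity-type'' conditions recorded there. Indeed, since $A^\infty(\vr)=\bigcap_{n\in\N}\ra(A^n)$, the assumption says precisely that $\kr(A)\subseteq\ra(A^n)$ for every $n\in\N$, that is, condition~(c) of Theorem~\ref{HT5}; equivalently it is condition~(i) of Corollary~\ref{HC1}, and hence each of the equivalent conditions of Theorem~\ref{HT1} holds.

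For the surjectivity statement, Theorem~\ref{HT5} then gives $C(A)=A^\infty(\vr)$, and by the definition of the algebraic core (Definition~\ref{HD2}) one has $A(C(A))=C(A)$; substituting yields $A(A^\infty(\vr))=A^\infty(\vr)$. If a self-contained argument is preferred, the key step is the implication ``$A\phi\in A^\infty(\vr)$ implies $\phi\in A^\infty(\vr)$'': fixing $n\in\N$ and writing $A\phi=A^{n+1}w$, one gets $\phi-A^nw\in\kr(A)\subseteq A^\infty(\vr)\subseteq\ra(A^n)$, hence $\phi\in\ra(A^n)$ for all $n$, i.e.\ $\phi\in A^\infty(\vr)$; then, given $y\in A^\infty(\vr)\subseteq\ra(A)$, one chooses $\phi$ with $A\phi=y$, concludes $\phi\in A^\infty(\vr)$, and combines this with the $A$-invariance of $A^\infty(\vr)$ from Proposition~\ref{HP1} to obtain the reverse inclusion, hence equality.

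For the second statement, Corollary~\ref{HC1} shows the hypothesis is equivalent to $N^\infty(A)\subseteq A^\infty(\vr)$; since $\kr(A^m)\subseteq\bigcup_{n\in\N}\kr(A^n)=N^\infty(A)$ by Definition~\ref{HD1}, we get $\kr(A^m)\subseteq A^\infty(\vr)$ for every $m\in\N$. Alternatively, Theorem~\ref{HT1}(c) gives $\kr(A^m)\subseteq\ra(A^k)$ for all $k$, and intersecting over $k$ finishes it. I do not expect any genuine obstacle; the only point requiring care is recognizing that the hypothesis is exactly the input needed for Theorem~\ref{HT5} and Corollary~\ref{HC1}, so that essentially all the work has already been done in those results.
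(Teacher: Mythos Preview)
Your proposal is correct. The paper itself does not supply an explicit argument here; it simply states that the proof follows its complex counterpart and refers to Lemma~3.1.4 in \cite{La}. Your approach is genuinely more self-contained within the paper: you observe that the hypothesis $\kr(A)\subseteq A^\infty(\vr)$ is precisely condition~(i) of Corollary~\ref{HC1} (equivalently condition~(c) of Theorem~\ref{HT5}), so that Theorem~\ref{HT5} yields $C(A)=A^\infty(\vr)$, and then $A(C(A))=C(A)$ from Definition~\ref{HD2} gives the surjectivity of $A$ on $A^\infty(\vr)$; the inclusion $\kr(A^m)\subseteq A^\infty(\vr)$ falls out immediately from the equivalence with condition~(iii) of Corollary~\ref{HC1}. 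The direct proof in \cite{La} is essentially the ``self-contained'' argument you sketch in your second paragraph, so both routes are available; your primary route has the advantage of making visible that the result is an immediate corollary of the algebraic-core machinery already developed in Section~5.
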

\begin{proof}
	A proof follows its complex counterpart. For a complex proof see \cite{La}, lemma 3.1.4.
\end{proof}
\begin{proposition}\label{3.1.3}
	Suppose that the operator $A\in\B(\vr)$ has closed range, and that $Y$ is a closed right linear subspace of $\vr$ that contains $\kr(A)$, then $A(Y)$ is closed.
\end{proposition}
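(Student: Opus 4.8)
The plan is to reduce the statement to the positivity of the reduced minimum modulus of $A$. Since $\ra(A)$ is closed, Theorem \ref{HT3} gives $\gamma(A)>0$, and recalling the definition of $\gamma(A)$ this yields $\|A\phi\|\geq\gamma(A)\,\text{dist}(\phi,\kr(A))$ for every $\phi\in\vr$. Because $\kr(A)$ is a closed subspace (Proposition \ref{IP30}(c)), Proposition \ref{ort}(b) lets us write $\vr=\kr(A)\oplus\kr(A)^{\perp}$, and for $\psi\in\kr(A)^{\perp}$ one has $\text{dist}(\psi,\kr(A))=\|\psi\|$; hence $\|A\psi\|\geq\gamma(A)\|\psi\|$ for all $\psi\in\kr(A)^{\perp}$, i.e. $A$ is bounded below on $\kr(A)^{\perp}$.

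Next I would use the hypothesis $\kr(A)\subseteq Y$ together with the closedness of $Y$ to split $Y=\kr(A)\oplus\big(Y\cap\kr(A)^{\perp}\big)$ orthogonally inside the Hilbert space $Y$; note that $Y\cap\kr(A)^{\perp}$ is precisely the orthogonal complement of $\kr(A)$ taken within $Y$, hence a closed subspace of $\vr$. To prove $A(Y)$ is closed, take a sequence $\{Ay_n\}$ with $y_n\in Y$ and $Ay_n\longrightarrow\phi$ in $\vr$; writing $y_n=k_n+y_n'$ with $k_n\in\kr(A)$ and $y_n'\in Y\cap\kr(A)^{\perp}$, we have $Ay_n'=Ay_n$. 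Since $y_n'-y_m'\in\kr(A)^{\perp}$, the bound from the first paragraph gives $\|Ay_n'-Ay_m'\|=\|A(y_n'-y_m')\|\geq\gamma(A)\|y_n'-y_m'\|$, so $\{y_n'\}$ is Cauchy (because $\{Ay_n'\}$ converges) and therefore converges to some $y'\in Y\cap\kr(A)^{\perp}\subseteq Y$. Continuity of $A$ then yields $Ay'=\lim_n Ay_n'=\phi$, so $\phi\in A(Y)$, proving $A(Y)$ closed.

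I do not anticipate a real obstacle: the argument is a direct transcription of the complex proof, and every ingredient is already available in the quaternionic setting — the equivalence $\gamma(A)>0\Leftrightarrow\ra(A)$ closed (Theorem \ref{HT3}), the orthogonal decomposition along a closed subspace (Proposition \ref{ort}), and closedness of $\kr(A)$ (Proposition \ref{IP30}). The only point requiring a line of care is verifying that the orthogonal splitting of $Y$ along $\kr(A)$ lands inside the ambient $\kr(A)^{\perp}$, which is immediate from $\kr(A)\subseteq Y$.
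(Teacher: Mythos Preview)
Your proof is correct. The paper does not actually supply a proof here; it simply refers to the complex version in \cite{La}, Lemma 3.1.3. That reference works in the Banach space setting and passes to the quotient $\vr/\kr(A)$, on which the induced operator is injective with closed range (hence bounded below), and then observes that $Y/\kr(A)$ is closed so its image $A(Y)$ is closed. Your argument is the Hilbert space analogue: instead of quotienting by $\kr(A)$ you split orthogonally along it, which is equivalent since $\kr(A)^{\perp}$ is isometric to $\vr/\kr(A)$. Both proofs ultimately rest on the same fact, namely $\gamma(A)>0$ (Theorem \ref{HT3}), and your version is arguably cleaner in the present Hilbert space context because it avoids the quotient construction and uses only results already stated in the paper (Proposition \ref{ort}, Proposition \ref{IP30}, Theorem \ref{HT3}).
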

	\begin{proof}
		A proof follows its complex counterpart. For a complex proof see \cite{La}, lemma 3.1.3.
	\end{proof}
\begin{proposition}\label{3.1.5}
Let $A\in\B(\vr)$ and $\qu\in\rka(A)$. Then $R_\qu(A)^m$ has closed range for every $m\in\N$, the space $R_\qu(A)^\infty(\vr)$ is closed, $R_\qu(A)$ maps $R_\qu(A)^\infty(\vr)$ onto itself, and $R_\qu(A)^\infty(\vr)\subseteq X_A(\quat\setminus\{\qu\})$.
\end{proposition}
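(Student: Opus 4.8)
The plan is to write $T:=R_\qu(A)$ and use that, by Remark \ref{KR1}(a), the hypothesis $\qu\in\rka(A)$ says precisely that $T$ is semi-regular; the first three assertions then follow quickly from the machinery of Section 5, and the fourth is obtained by restricting $A$ to $T^\infty(\vr)$.

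For the closed-range claim: by Theorem \ref{HT4}(b), $T^m$ is semi-regular for every $m\in\N$, and by Definition \ref{HD3} a semi-regular operator has closed range (the case $m=0$ being trivial). For the closedness of $R_\qu(A)^\infty(\vr)$ and the onto property, apply Theorem \ref{AT4} to $T$: it gives that $C(T)$ is closed and $C(T)=K(T)=T^\infty(\vr)$; since by Definition \ref{HD2} the algebraic core satisfies $T(C(T))=C(T)$, we conclude $T(T^\infty(\vr))=T^\infty(\vr)$, that is, $R_\qu(A)$ maps $R_\qu(A)^\infty(\vr)$ onto itself, and $R_\qu(A)^\infty(\vr)$ is closed.

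For the inclusion $T^\infty(\vr)\subseteq X_A(\quat\setminus\{\qu\})$, set $W:=T^\infty(\vr)$. It is closed by the previous step; moreover $T=A^2-2\mathrm{Re}(\qu)A+|\qu|^2\Iop$ is a real polynomial in $A$, hence commutes with $A$, so each $\ra(T^n)$ is $A$-invariant and therefore so is $W=\bigcap_{n\in\N}\ra(T^n)$. Thus $A|_W$ is a bounded right $\quat$-linear operator on the right quaternionic Hilbert space $W$, with $R_\qu(A|_W)=R_\qu(A)|_W=T|_W$; by the onto property just proved, $T|_W$ is surjective on $W$, so $\qu\notin\sus(A|_W)$. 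Proposition \ref{LP1}(a), applied to $A|_W$, then yields an $r>0$ with $W=\mathcal{X}_{A|_W}(\quat\setminus B_\quat(\qu,r))$; by Definition \ref{LD2}, every $\phi\in W$ admits a right-slice regular $f:B_\quat(\qu,r)\longrightarrow W\subseteq\vr$ satisfying $R_\pu(A)f(\pu)=R_\pu(A|_W)f(\pu)=\phi$ for all $\pu\in B_\quat(\qu,r)$. Hence $\qu\in\lr(\phi)$, so $\ls(\phi)\subseteq\quat\setminus\{\qu\}$ and $\phi\in X_A(\quat\setminus\{\qu\})$; this proves $W\subseteq X_A(\quat\setminus\{\qu\})$ (the case $W=\{0\}$ being trivial).

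The points to be checked with care are that $A|_W$ really is a bounded right $\quat$-linear operator on $W$, so that the results of Sections 4 and 6 apply to it verbatim, the identity $R_\pu(A|_W)=R_\pu(A)|_W$ (immediate from $(A|_W)^2=(A^2)|_W$), and that a map slice regular as a $W$-valued function remains slice regular as a $\vr$-valued function (automatic, since $W\hookrightarrow\vr$ is continuous and right $\quat$-linear). I expect the only genuine obstacle to be conceptual rather than technical: one cannot argue inside $\vr$ directly, because there $R_\qu(A)|_W$ is never injective when $\kr(T)\neq\{0\}$ (recall $\kr(T)\subseteq T^\infty(\vr)$ for semi-regular $T$); the resolution is to transfer the problem to $A|_{T^\infty(\vr)}$, where $R_\qu$ becomes surjective and Proposition \ref{LP1}(a) becomes available.
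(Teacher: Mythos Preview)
Your proof is correct and reaches the same conclusions, but the route differs from the paper's in the first three parts. The paper proves that $R_\qu(A)^m$ has closed range by induction, invoking the auxiliary Propositions \ref{3.1.3} and \ref{3.1.4} (which are lifted directly from the complex theory in Laursen--Neumann); closedness of $R_\qu(A)^\infty(\vr)$ then follows as an intersection of closed sets, and the onto property from Proposition \ref{3.1.4} alone. You instead appeal to the semi-regularity machinery already built in Section 5: Theorem \ref{HT4}(b) gives semi-regularity of all powers (hence closed range), and Theorem \ref{AT4} identifies $T^\infty(\vr)$ with the closed algebraic core $C(T)$, from which both closedness and $T(T^\infty(\vr))=T^\infty(\vr)$ are immediate. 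Your path is shorter and makes Propositions \ref{3.1.3} and \ref{3.1.4} unnecessary for this result, at the cost of relying on the somewhat heavier Theorem \ref{AT4}.

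For the final inclusion both arguments are essentially the same: restrict $A$ to the closed $A$-invariant subspace $W=R_\qu(A)^\infty(\vr)$, observe that $R_\qu(A|_W)$ is surjective there, and transport local-spectral information back to $\vr$. The paper quotes Proposition \ref{LP1}(b) to write $\sigma_A^S(\phi)\subseteq\sigma_{A|_W}^S(\phi)\subseteq\sus(A|_W)\subseteq\quat\setminus\{\qu\}$, while you quote Proposition \ref{LP1}(a) to produce an explicit slice-regular solution on a ball around $\qu$; these are interchangeable here. Your closing remarks about the care points (that $A|_W$ is genuinely in $\B(W)$, the identity $R_\pu(A|_W)=R_\pu(A)|_W$, and that $W$-valued slice regularity persists in $\vr$) are exactly the details that justify the restriction step in either version.
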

\begin{proof}
Claim: $R_\qu(A)^m$ has closed range for each $m\in\N$.\\
We prove it by induction. Since $\qu\in\rka(A)$, $\ra(R_\qu(A))$ is closed so the case $m=1$ is clear. Assume that $\ra(R_\qu(A)^m)$ is closed for some $m\geq 1$. Let $Y=\ra(R_\qu(A))$. From proposition \ref{3.1.4}, we know that, as $\qu\in\rka(A)$, $\kr(R_\qu(A)^m)\subseteq R_\qu(A)^\infty(\vr)\subseteq Y$. Therefore, by proposition \ref{3.1.3}, $R_\qu(A)^m(Y)$ is closed. That is, $R_\qu(A)^m(Y)=R_\qu(A)^{m+1}(\vr)$ is closed, which completes the induction.\\
As $R_\qu(A)^m(\vr)$ is closed for all $m\in\N$, their intersection $R_\qu(A)^\infty(\vr)$ is closed. Since $\qu\in\rka(A)$, $\kr(R_\qu(A))\subseteq R_\qu(A)^\infty(\vr)$, therefore, by proposition \ref{3.1.4}, $R_\qu(A)$ maps $R_\qu(A)^\infty(\vr)$ onto itself.
 To prove the  inclusion, we can say from proposition \ref{3.1.4} that the restriction of $R_\qu(A)$ to $R_\qu(A)^\infty(\vr)$ is surjective. Thus $\qu\not\in\sus(B)$, where $B:=A\vert_{R_\qu(A)^\infty(\vr)}$. Let $\phi\in R_\qu(A)^\infty(\vr)$, then part (b) of proposition \ref{LP1} to conclude that
$$\ls(\phi)\subseteq\sigma_B^S(\phi)\subseteq\sus(B)\subseteq\quat\setminus\{\qu\}.$$
This observation shows that $\phi\in X_A(\quat\setminus\{q\}).$ Thus $R_\qu(A)^\infty(\vr)\subseteq  X_A(\quat\setminus\{q\})$.
\end{proof}
Following the complex definition of analytic residuum in \cite{La} we define the following.
\begin{definition}\label{Res}
	Let $A\in\B(\vr)$, the analytic residuum $S(A)$ is the open set of points $\qu\in\quat$ for which there exists a non-vanishing continuous right-slice regular function $f:U\longrightarrow\vr$ on some open neighborhood $U$ of $\qu$ such that $R_\pu(A)f(\pu)=0$ for all $\pu\in U$.
	\end{definition}
\begin{proposition}\label{ResP}
Let $A\in\B(\vr)$, then $S(A)\subseteq\text{int}\sigma_{pS}(A)$, the interior of $\sigma_{pS}(A)$. Moreover $S(A)$ is empty if $A$ has SVEP.
\end{proposition}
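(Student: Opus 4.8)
The plan is to read the result directly off the two definitions involved, namely the analytic residuum $S(A)$ of Definition~\ref{Res} and the SVEP of Definition~\ref{SVEP}. The key elementary observation is that a nowhere-vanishing local solution of $R_\pu(A)f(\pu)=0$ exhibits, at every point of its domain, a nonzero element of the corresponding kernel.

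For the inclusion $S(A)\subseteq\text{int}\,\sigma_{pS}(A)$ I would fix $\qu_0\in S(A)$ and take, as provided by Definition~\ref{Res}, an open neighborhood $U$ of $\qu_0$ and a non-vanishing continuous right-slice regular function $f:U\longrightarrow\vr$ with $R_\pu(A)f(\pu)=0$ for all $\pu\in U$. Then for each $\pu\in U$ we have $0\neq f(\pu)\in\kr(R_\pu(A))$, so $\kr(R_\pu(A))\neq\{0\}$, that is, $\pu\in\sigma_{pS}(A)$. Hence $U\subseteq\sigma_{pS}(A)$, and since $U$ is open, $U\subseteq\text{int}\,\sigma_{pS}(A)$; in particular $\qu_0\in\text{int}\,\sigma_{pS}(A)$. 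As $\qu_0\in S(A)$ was arbitrary, this yields $S(A)\subseteq\text{int}\,\sigma_{pS}(A)$.

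For the final assertion I would argue by contradiction: assume $A$ has SVEP but $S(A)\neq\emptyset$, and pick $\qu_0\in S(A)$ together with the neighborhood $U$ and function $f$ as above. Since $A$ has SVEP, it has SVEP at $\qu_0$, so by Definition~\ref{SVEP} the only continuous right-slice regular solution on $U$ of $R_\pu(A)g(\pu)=0$ for all $\pu\in U$ is the zero function; applying this to $g=f$ forces $f\equiv 0$ on $U$, contradicting the fact that $f$ is non-vanishing (for instance $f(\qu_0)\neq 0$). Hence $S(A)=\emptyset$.

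I do not expect a genuine obstacle here: the proposition is essentially a reformulation of Definitions~\ref{Res} and~\ref{SVEP}. The only point requiring a little care is that ``non-vanishing'' must be read as \emph{nowhere zero on $U$}, so that \emph{every} point of $U$ (not merely a dense subset of it) lies in $\sigma_{pS}(A)$; were one to know only that $f\not\equiv 0$, one would first invoke the identity principle for slice-regular functions to shrink $U$ to a subneighborhood on which $f$ has no zeros before running the same argument.
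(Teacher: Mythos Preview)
Your argument is correct and follows essentially the same route as the paper's proof: both take a point of $S(A)$, use the non-vanishing solution $f$ to see that $\kr(R_\pu(A))\neq\{0\}$ for every $\pu\in U$, conclude $U\subseteq\sigma_{pS}(A)$, and then note that the SVEP statement is immediate from the definitions. Your write-up is slightly more explicit (you spell out the contradiction for the SVEP part and add a useful caveat on the meaning of ``non-vanishing''), but the underlying idea is identical.
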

\begin{proof}
Let $\qu\in S(A)$, then there exists an open neighborhood $U$ of $\qu$ and a non-vanishing right-slice regular function $f:U\longrightarrow\vr$ such that $R_\pu(A)f(\pu)=0$ for all $\pu\in U$. Since $f(\pu)\not=0$ for all $\pu\in U$, $\kr(R_\pu(A))\not=\{0\}$ for all $\pu\in U$. Hence $\qu\in U\subseteq \sigma_{pS}(A)$. Therefore, $S(A)\subseteq\text{int}\sigma_{pS}(A)$. $S(A)=\emptyset$ if $A$ has SVEP is trivial from the definitions.
\end{proof}
Proposition \ref{3.1.5} leads to the following sandwich formula for the Kato S-spectrum. In particular, we obtain $\partial\sigma_S(A)\subseteq\ska(A)$, which ensures that $\ska(A)$ is non-empty provided that $\vr$ is non-trivial.
\begin{proposition}\label{3.1.6}
Let $A\in\B(\vr)$, then
\begin{enumerate}
	\item [(a)]$\rka(A)=\rka(A^\dagger)$;
	\item[(b)] $\rka(A)\cap\sigma_S(A)\subseteq S(A)\cup S(A^\dagger)$;
	\item[(c)]$\partial\sigma_S(A)\subseteq(\apo(A)\cap\sus(A))\setminus(S(A)\cap S(A^\dagger)\subseteq\ska(A)\subseteq\apo(A)\cap\sus(A)$;
	\item[(d)]$(\apo(A)\cap\sus(A))\setminus(S(A)\cap S(A^\dagger)=(\apo(A)\setminus S(A))\cup(\sus(A)\setminus S(A^\dagger))$.
\end{enumerate}
\end{proposition}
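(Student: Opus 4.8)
The plan is to prove (a) directly, then isolate one Lemma that circumvents the missing analogue of $Q_\lambda(A)=Q_\mu(A)-(\lambda-\mu)\Iop$, and finally read off (b), (c), (d) by combining it with the material of Sections~4--7. For (a), note first that $R_\qu(A)^\dagger=R_\qu(A^\dagger)$, since $R_\qu$ involves only the real scalars $\mathrm{Re}(\qu)$ and $|\qu|^2$ and $(A^2)^\dagger=(A^\dagger)^2$; so it is enough to show that semi-regularity is self-dual and apply this to $T=R_\qu(A)$. If $T\in\B(\vr)$ is semi-regular, Theorem~\ref{HT4} makes each power $T^n$ semi-regular, hence $\ra(T^n)$ is closed; Proposition~\ref{NP1} then gives that $\ra(T^\dagger)$ is closed, while Proposition~\ref{IP30} turns the semi-regularity relations $\kr(T)\subseteq\ra(T^n)$, $n\in\N$, into $\kr((T^\dagger)^n)=\ra(T^n)^\perp\subseteq\kr(T)^\perp=\ra(T^\dagger)$; by the characterisation in Theorem~\ref{HT1}(b), $T^\dagger$ is semi-regular, and by symmetry $\rka(A)=\rka(A^\dagger)$.

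The key step is the Lemma: if $B\in\B(\vr)$, $\qu\in\rka(B)$ and $\kr(R_\qu(B))\ne\{0\}$, then $\qu\in S(B)$. To prove it, pick $0\ne\phi_0\in\kr(R_\qu(B))$; since $R_\qu(B)$ is semi-regular, $\phi_0\in\kr(R_\qu(B))\subseteq R_\qu(B)^\infty(\vr)$, so Proposition~\ref{3.1.5} gives $\phi_0\in X_B(\quat\setminus\{\qu\})$, which by definition means that $\qu$ lies in the local $S$-resolvent set of $B$ at $\phi_0$. Hence there are an open neighbourhood $U$ of $\qu$ and a continuous right-slice regular $g:U\longrightarrow\vr$ with $R_\pu(B)g(\pu)=\phi_0$ for all $\pu\in U$. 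Put $f(\pu):=R_\qu(B)g(\pu)$. Since a bounded right-linear operator applied to a right-slice regular function is again right-slice regular, $f$ is continuous right-slice regular on $U$; since $R_\pu(B)$ and $R_\qu(B)$ commute (both being real polynomials in $B$), $R_\pu(B)f(\pu)=R_\qu(B)R_\pu(B)g(\pu)=R_\qu(B)\phi_0=0$ for all $\pu\in U$; and $f(\qu)=R_\qu(B)g(\qu)=\phi_0\ne 0$, so $f$ is non-vanishing near $\qu$, whence $\qu\in S(B)$. Part (b) now follows at once: for $\qu\in\rka(A)\cap\sigma_S(A)$ the operator $R_\qu(A)$ is semi-regular but not bijective, so it is either not injective — and the Lemma gives $\qu\in S(A)$ — or not surjective, in which case $\ra(R_\qu(A))$ is a proper closed subspace and $\kr(R_\qu(A^\dagger))=\kr(R_\qu(A)^\dagger)=\ra(R_\qu(A))^\perp\ne\{0\}$ by Proposition~\ref{IP30}, so the Lemma applied to $A^\dagger$ (legitimate by (a)) gives $\qu\in S(A^\dagger)$.

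Parts (c) and (d) are then formal. In (c): $\partial\sigma_S(A)\subseteq\apo(A)\cap\sus(A)$ by Theorem~\ref{T1} and Proposition~\ref{su2}, while $S(A)\subseteq\mathrm{int}\,\sigma_{pS}(A)\subseteq\mathrm{int}\,\sigma_S(A)$ by Proposition~\ref{ResP} is disjoint from $\partial\sigma_S(A)$, hence so is $S(A)\cap S(A^\dagger)$, which gives the leftmost inclusion; for the middle one take $\qu\in\rka(A)$, note that if $\qu\notin\sigma_S(A)$ then $\qu\notin\apo(A)$, and that if $\qu\in\sigma_S(A)$ and $\qu\notin S(A)$ the Lemma forces $\kr(R_\qu(A))=\{0\}$, so $R_\qu(A)$ is bounded below and $\qu\notin\apo(A)$, while symmetrically $\qu\notin S(A^\dagger)$ forces $\qu\notin\apo(A^\dagger)=\sus(A)$; the rightmost inclusion $\ska(A)\subseteq\apo(A)\cap\sus(A)$ comes from Proposition~\ref{KP0} applied to $A$ and, via (a) and Proposition~\ref{su1}(b), to $A^\dagger$. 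For (d), $(\apo(A)\cap\sus(A))\setminus(S(A)\cap S(A^\dagger))=(\apo(A)\setminus S(A))\cup(\sus(A)\setminus S(A^\dagger))$ is a set-theoretic identity once one knows $\apo(A)\setminus S(A)\subseteq\sus(A)$ and $\sus(A)\setminus S(A^\dagger)\subseteq\apo(A)$, and these follow from the Lemma (e.g. if $\qu\in\apo(A)\setminus\sus(A)$, then $R_\qu(A)$ is surjective, hence semi-regular, hence not injective, so $\qu\in S(A)$). The one genuinely delicate point is the Lemma — specifically the appeal to Proposition~\ref{3.1.5} and the need to check that $R_\qu(B)g$ is again right-slice regular and takes the nonzero value $\phi_0$ at $\qu$; everything else is bookkeeping with spectra already at hand.
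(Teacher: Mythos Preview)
Your proposal is correct and follows essentially the same route as the paper. Your isolated ``Lemma'' is exactly the paper's claim $\rka(A)\cap\apo(A)\subseteq S(A)$ (equation~\ref{Ke3}), proved identically via Proposition~\ref{3.1.5} and the construction $f(\pu)=R_\qu(B)g(\pu)$; the remaining parts (a), (b), (c), (d) are assembled from it and the duality $\sus(A)=\apo(A^\dagger)$, $\rka(A)=\rka(A^\dagger)$ just as the paper does, with only cosmetic differences in organisation.
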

	 \begin{proof}
	 	(a)~~Let $\qu\in\rka(A)$. Then $\ra(R_\qu(A))$ is closed and by corollary \ref{HC1}, $\kr(R_\qu(A))\subseteq R_\qu(A)^\infty(\vr)$. Then by proposition \ref{3.1.5}, $R_\qu(A)^n$ has closed range for every $n\in\N$, and by proposition \ref{3.1.4}, $\kr(R_\qu(A)^m)\subseteq R_\qu(A)(\vr)$. Hence by the proposition \ref{NP1}, $R_\qu(A^\dagger)^n(\vr)$ is closed and, by proposition \ref{IP30}
	 	$$\kr(R_\qu(A^\dagger))=[R_\qu(A)(\vr)]^\perp\subseteq[\kr(R_\qu(A)^n)]^\perp=R_\qu(A^\dagger)^n(\vr)$$
	 	for all $n\in\N$. Thus $\qu\in\rka(A^\dagger)$, and therefore $\rka(A)\subseteq\rka(A^\dagger)$. The opposite inclusion is similar.\\
	 	(b)~~From proposition \ref{KP0} we have $\quat\setminus\apo(A)\subseteq\rka(A)$. For $\qu\in\quat\setminus\sus(A)$, we have $R_\qu(A)(\vr)=\vr$, and hence trivially $\qu\in\rka(A)$. Therefore, 
	 	\begin{equation}\label{Ke1}
	 	\ska(A)\subseteq\apo(A)\cap\sus(A).
	 	\end{equation}
	 	From theorem \ref{T1} and proposition \ref{su2}, we have $\partial\sigma_S(A)\subseteq\apo(A)\cap\sus(A)$. From proposition \ref{ResP}, we have, 
	 	$$S(A)\cap S(A^\dagger)\subseteq \text{int}\sigma_{pS}(A)\cap\text{int}\sigma_{pS}(A^\dagger)\subseteq\text{int}\sigma_S(A)$$
	 	as $\sigma_S(A)=\sigma_S(A^\dagger)$ by proposition \ref{su1}.
	 	Therefore,
	 	\begin{equation}\label{Ke2}
	 	\partial\sigma_S(A)\subseteq(\apo(A)\cap\sus(A))\setminus(S(A)\cap S(A^\dagger).
	 	\end{equation}
	 	Claim:
	 	\begin{equation}\label{Ke3}
	 	 \rka(A)\cap\apo(A)\subseteq S(A).
	 	\end{equation}
	 	Let $\qu\in\rka(A)\cap\apo(A)$. Since $\qu\in\rka(A)$, $\ra(R_\qu(A))$ is closed, and since $\qu\in\apo(A)$, by proposition \ref{P4}, $\kr(R_\qu(A))\not=\{0\}$. Therefore $\qu$ is a right eigenvalue of $A$. Let $\phi$ be a corresponding eigenvector, then $\phi\in\kr(R_\qu(A))$. Hence, by proposition \ref{3.1.5}, as $\qu\in\rka(A)$,
	 	$$\phi\in\kr(R_\qu(A))\subseteq R_\qu(A)^\infty(\vr)\subseteq X_A(\quat\setminus\{\qu\}).$$
	 	Thus, by the definition of $X_A(\quat\setminus\{\qu\})$, $\qu\in\lr(\phi)$,  there exists a right-slice regular function $f:U\longrightarrow\vr$ on an open neighborhood of $\qu$ for which $R_\pu(A)f(\pu)=\phi$ for all $\pu\in U$. Define the right-slice regular function $g:U\longrightarrow\vr$ by $g(\pu)=R_\qu(A)f(\pu)$ for all $\pu\in U$. Since $R_\qu(A)$ and $R_\pu(A)$ commute, we have
	 	\begin{eqnarray*}
	 	R_\pu(A)g(\pu)&=&	R_\pu(A)R_\qu(A)f(\pu)=R_\qu(A)R_\pu(A)f(\pu)\\
	 	&=&R_\qu(A)\phi=0\quad\text{for all}~~\pu\in U.
	 	\end{eqnarray*}
 	Since $g(\qu)=R_\qu(A)f(\qu)=\phi\not=0,$ by the continuity of $g$, there exists a neighborhood $V$ of $\qu$ in $\quat$ on which $g$ does not vanish. Therefore $\qu\in S(A)$. The claim is proved.\\
 	Claim: $\rka(A)\cap\sigma_S(A)\subseteq S(A)\cup S(A^\dagger).$\\
 	From proposition \ref{su1} we have $\sus(A)=\apo(A^\dagger)$. From part (a) and equation \ref{Ke3} we get
 	\begin{equation}\label{Ke4}
 	\rka(A)\cap\sus(A)=\rka(A^\dagger)\cap\apo(A^\dagger)\subseteq S(A^\dagger).
 	\end{equation}
 	Also from equation \ref{sue1}, we have $\sigma_S(A)=\sigma_{pS}(A)\cup\sus(A)$. Therefore,
 	\begin{eqnarray*}
 		\rka(A)\cap\sigma_S(A)&=&\rka(A)\cap(\sigma_{pS}(A)\cup\sus(A))\\
 		&=&(\rka(A)\cap\sigma_{pS}(A))\cup(\rka(A)\cap\sus(A)\\
 		&\subseteq&(\rka(A)\cap\apo(A))\cup(\rka(A)\cap\sus(A))\quad\text{by proposition \ref{P3}}\\
 			&=&S(A)\cup S(A^\dagger)\quad\text{by equations \ref{Ke3} and \ref{Ke4}}
 			\end{eqnarray*}
(c)~~From equations \ref{Ke3} and \ref{Ke4}, we also have
$$\rka(A)\cap\apo(A)\cap\sus(A)\subseteq S(A)\cap S(A^\dagger),$$
which means
$$(\apo(A)\cap\sus(A))\setminus(S(A)\cap S(A^\dagger)\subseteq\ska(A).$$
Thus, from equations \ref{Ke1} and \ref{Ke2}, we get (c).\\
(d)~~From equation \ref{sue1} and proposition \ref{P3}, we have
\begin{equation}\label{Ke5}
\sigma_S(A)=\sigma_{pS}(A)\cup\sus(A)\subseteq\apo(A)\cup\sus(A).
\end{equation}
Hence, from proposition \ref{KP0} and equation \ref{Ke5}, we get
\begin{equation}\label{Ke6}
\sigma_S(A)\setminus\apo(A)\subseteq\sus(A)\quad\text{and}\quad\sigma_S(A)\setminus\apo(A)\subseteq\rka(A).
\end{equation}
From equations \ref{Ke6} and \ref{Ke4} we also get
\begin{equation}\label{Ke7}
\sigma_S(A)\setminus\apo(A)\subseteq\rka(A)\cap\sus(A)\subseteq S(A^\dagger).
\end{equation}
Also from equation \ref{Ke5} we obtain $\sigma_S(A)\setminus\sus(A)\subseteq\apo(A)$. Further, proposition \ref{CP1} and equation \ref{sue1} yield $\sigma_S(A)\subseteq \apo(A)\cup\sus(A)$.
 Hence, from proposition \ref{su1} part (b), proposition \ref{KP0} and part (a) of this proposition, we get $\sigma_S(A)\setminus\sus(A)\subseteq\rka(A)$. Therefore, we have
\begin{equation}\label{Ke8}
\sigma_S(A)\setminus\sus(A)\subseteq\rka(A)\cap\apo(A)\subseteq S(A).
\end{equation}
From  equations \ref{Ke7} and \ref{Ke8} we get the inclusion 
$$(\apo(A)\setminus S(A))\cup (\sus(A)\setminus S(A^\dagger))\subseteq(\sus(A)\cap\apo(A))\setminus(S(A)\cap S(A^\dagger).$$
The opposite inclusion is trivial, and hence we have (d).
	 \end{proof}
The sandwich formula of proposition \ref{3.1.6} yields a precise description of the Kato S-spectrum when one of the sets $S(A)$ or $S(A^\dagger)$ is empty, which is another way of saying that $A$ or $A^\dagger$ has SVEP. We present it in the following corollary.
\begin{corollary}\label{KC1}
Let $A\in\B(\vr)$.
\begin{enumerate}
	\item [(a)] If $A$ has SVEP, then $\ska(A)=\apo(A)$.
	\item[(b)] If $A^\dagger$ has SVEP, then $\ska(A)=\sus(A)$,
	\item[(c)] If $A$ and $A^\dagger$ have SVEP, then $\ska(A)=\sigma_S(A)$.	
\end{enumerate}
\end{corollary}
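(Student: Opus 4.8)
The plan is to read off all three statements directly from the sandwich formula of Proposition~\ref{3.1.6}, using only the fact (Proposition~\ref{ResP}) that an operator with SVEP has empty analytic residuum. The two ingredients I would single out from the proof of Proposition~\ref{3.1.6} are the inclusions
\begin{equation*}
\rka(A)\cap\apo(A)\subseteq S(A)\quad\text{and}\quad\rka(A)\cap\sus(A)\subseteq S(A^\dagger)
\end{equation*}
(these are \eqref{Ke3} and \eqref{Ke4}), together with the unconditional bound $\ska(A)\subseteq\apo(A)\cap\sus(A)$ from \eqref{Ke1}.

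For (a): if $A$ has SVEP then $S(A)=\emptyset$ by Proposition~\ref{ResP}, so \eqref{Ke3} forces $\rka(A)\cap\apo(A)=\emptyset$, i.e. $\apo(A)\subseteq\quat\setminus\rka(A)=\ska(A)$. Since also $\ska(A)\subseteq\apo(A)\cap\sus(A)\subseteq\apo(A)$, the reverse inclusion holds and $\ska(A)=\apo(A)$. Part (b) is obtained by the mirror-image argument applied to $A^\dagger$ (which again lies in $\B(\vr)$ by Proposition~\ref{dag}): SVEP of $A^\dagger$ gives $S(A^\dagger)=\emptyset$, so \eqref{Ke4} yields $\sus(A)\subseteq\ska(A)$, and combined with $\ska(A)\subseteq\apo(A)\cap\sus(A)\subseteq\sus(A)$ this gives $\ska(A)=\sus(A)$.

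For (c): if $A$ and $A^\dagger$ both have SVEP then part (a) gives $\ska(A)=\apo(A)$, while part~(d) of Proposition~\ref{LP1} applied to $A^\dagger$ gives $\apo(A)=\sigma_S(A)$; hence $\ska(A)=\sigma_S(A)$. (Equivalently one may use part (b) together with $\sus(A)=\sigma_S(A)$, which holds because $A$ has SVEP, again by Proposition~\ref{LP1}(d).) There is no real obstacle here: all the substantive work has already been carried out in Propositions~\ref{3.1.6} and~\ref{ResP}. The only points requiring care are to invoke the correct half of the sandwich formula in each case, and, for (c), to remember that the final identification with $\sigma_S(A)$ relies on Proposition~\ref{LP1}(d) rather than on the Kato $S$-spectrum inclusions alone.
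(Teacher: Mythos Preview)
Your proof is correct and follows essentially the same approach as the paper: in each part you combine $S(A)=\emptyset$ (resp.\ $S(A^\dagger)=\emptyset$) from Proposition~\ref{ResP} with the inclusions \eqref{Ke3}, \eqref{Ke4}, \eqref{Ke1} to sandwich $\ska(A)$ between $\apo(A)$ (resp.\ $\sus(A)$) and itself, and then invoke Proposition~\ref{LP1}(d) for part~(c). The only cosmetic difference is that the paper cites Proposition~\ref{KP0} directly for $\ska(A)\subseteq\apo(A)$, whereas you go through \eqref{Ke1}; these are equivalent.
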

\begin{proof}
(a)~Suppose $A$ has SVEP, then by proposition \ref{ResP}, $S(A)=\emptyset$. By equation \ref{Ke3} and proposition \ref{KP0} we have
$$\apo(A)\setminus S(A)\subseteq\ska(A)\subseteq\apo(A).$$
Therefore $\apo(A)=\ska(A)$.\\
(b)~~Similarly, if $A^\dagger$ has SVEP then $S(A^\dagger)=\emptyset$. But by proposition \ref{3.1.6} part(a), proposition \ref{su1} part (b) and equation \ref{Ke4}, we have
$$\sus(A)\setminus S(A^\dagger)\subseteq\ska(A)\subseteq\sus(A),$$
and hence $\ska(A)=\sus(A)$.\\
(c)~If $A$ and $A^\dagger$ have SVEP, then by proposition \ref{LP1} part (d) and by the above two parts we get $\ska(A)=\sigma_S(A)$.
\end{proof}
\begin{remark}\label{KR2}
Let $A\in\B(\vr)$ is a non-invertible isometry 	 and $A$ has SVEP. Then by remark \ref{SR1} we have $\apo(A)\subseteq\partial B_\quat(0,1)$ and  $\sigma_S(A)=\nabla_\quat(0,1)$. Further, from proposition \ref{T1}, we have $\partial\sigma_S(A)\subseteq\apo(A)$. Therefore, from corollary \ref{KC1}, $\ska(A)=\apo(A)=\{\qu\in\quat~~|~~|\qu|=1\}=\partial B_\quat(0,1)$. Also, in this case, $\rka(A)\cap\sigma_S(A)=(\quat\setminus\{\qu\in\quat~~|~~|\qu|=1\})\cap\nabla_\quat(0,1)=\{\qu\in\quat~~|~~|\qu|< 1\}=B_\quat(0,1).$ 
\end{remark}

\begin{theorem}\label{1.46AII}
	Let $A\in\B(\vr)$, $M$ and $N$ be two $A$-invariant closed subspaces of $\vr$ and $\vr=M\oplus N$. Then $A$ is semi-regular if and only if $A|_M$ and $A|_N$ are semi-regular. Consequently, $\ska(A)=\ska(A|_M)\cup\ska(A|_N)$.
\end{theorem}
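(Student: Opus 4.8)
The plan is to unwind semi-regularity into its two defining ingredients, namely that $\ra(A)$ be closed and that $\kr(A)\subseteq A^{m}(\vr)$ for all $m\in\N$ (condition (a) of Theorem \ref{HT1}), and to transfer each of them across the decomposition $\vr=M\oplus N$, largely recycling the bookkeeping already carried out in the proof of Proposition \ref{su3}. First I would record the algebra of the splitting: since $M$ and $N$ are $A$-invariant and $\vr=M\oplus N$, the operator $A$ is block diagonal, so for every $k\in\N$
\begin{equation*}
A^{k}(\vr)=(A|_{M})^{k}(M)\oplus(A|_{N})^{k}(N),\qquad \kr(A^{k})=\kr((A|_{M})^{k})\oplus\kr((A|_{N})^{k}),
\end{equation*}
where $\kr((A|_{M})^{k})=\kr(A^{k})\cap M$ and similarly for $N$. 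Because all of these are \emph{internal} direct sums (each summand lies in $M$ resp. $N$, and $M\cap N=\{0\}$), a vector of $\kr(A)$ lies in $A^{m}(\vr)$ if and only if each of its unique $M$- and $N$-components lies in $(A|_{M})^{m}(M)$ resp. $(A|_{N})^{m}(N)$. Hence ``$\kr(A)\subseteq A^{m}(\vr)$ for all $m$'' is equivalent to the conjunction of ``$\kr(A|_{M})\subseteq(A|_{M})^{m}(M)$ for all $m$'' and ``$\kr(A|_{N})\subseteq(A|_{N})^{m}(N)$ for all $m$''.

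For the ranges I would invoke the Claim already established inside the proof of Proposition \ref{su3}(a): $\ra(A)$ is closed in $\vr$ if and only if $\ra(A|_{M})=A(M)$ is closed in $M$ and $\ra(A|_{N})=A(N)$ is closed in $N$, one direction using $A(M)=P_{M}(A(\vr))=A(\vr)\cap M$ (recall $P_{M}$ commutes with $A$), the other using the topological isomorphism $\Psi\colon M\times N\to M\oplus N$. Combining the two equivalences with Definition \ref{HD3}, and noting that each of $M$ and $N$ is itself a right quaternionic Hilbert space on which $A|_{M}$, $A|_{N}$ act as bounded right linear operators, we obtain: $A$ is semi-regular $\iff$ $\ra(A)$ is closed and $\kr(A)\subseteq A^{m}(\vr)$ for all $m$ $\iff$ $\ra(A|_{M})$ and $\ra(A|_{N})$ are closed and the two hyper-range inclusions above hold $\iff$ both $A|_{M}$ and $A|_{N}$ are semi-regular. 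This proves the equivalence.

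For the consequence on the Kato $S$-spectrum, fix $\qu\in\quat$ and apply the equivalence just proved to $R_{\qu}(A)\in\B(\vr)$. Since $M$ and $N$ are $A$-invariant, hence $A^{2}$-invariant, they are $R_{\qu}(A)$-invariant closed subspaces with $\vr=M\oplus N$, and $R_{\qu}(A)|_{M}=R_{\qu}(A|_{M})$, likewise on $N$, because restriction to an invariant subspace commutes with forming $A^{2}-2\text{Re}(\qu)A+|\qu|^{2}\Iop$. Therefore $R_{\qu}(A)$ is semi-regular if and only if $R_{\qu}(A|_{M})$ and $R_{\qu}(A|_{N})$ are semi-regular; by Remark \ref{KR1}(a) this says $\qu\in\rka(A)\iff\qu\in\rka(A|_{M})\cap\rka(A|_{N})$, i.e. $\rka(A)=\rka(A|_{M})\cap\rka(A|_{N})$, and passing to complements in $\quat$ yields $\ska(A)=\ska(A|_{M})\cup\ska(A|_{N})$.

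I do not expect a deep obstacle, since every ingredient is already available; the step that needs the most care is the second one — verifying that the hyper-range inclusion genuinely splits along $M\oplus N$, which rests only on the \emph{uniqueness} of the $M$/$N$-components and not on any topological fact — together with invoking the ``only if'' direction of the closed-range Claim of Proposition \ref{su3} in the correct ambient spaces ($M$ and $N$, not $\vr$).
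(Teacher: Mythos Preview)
Your proposal is correct and follows essentially the same route as the paper: both reduce semi-regularity to the closed-range condition plus the hyper-range inclusion, split each across the $A$-invariant decomposition $\vr=M\oplus N$, and then apply the result to $R_{\qu}(A)$. The only cosmetic differences are that the paper reproves the identity $A(M)=M\cap A(\vr)$ (and its inductive version $A^{n}(M)=M\cap A^{n}(\vr)$) in-line rather than citing the Claim inside Proposition~\ref{su3}, and phrases the kernel/range splitting via intersections with $M$ rather than via direct sums; the logical content is the same.
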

\begin{proof}
The equality $\kr(A|_M)=M\cap\kr(A)$ is trivial. Let us show that $A(M)=M\cap A(\vr)$. Since $M$ is $A$-invariant, trivially $A(M)\subseteq M\cap A(\vr)$. Conversely, if $\psi\in M\cap A(\vr)$, then $\psi\in M$ and $\psi=A(\phi)$ for some $\phi\in\vr$. Write $\phi=\phi_1+\phi_2$ with $\phi_1\in M$ and $\phi_2\in N$. Then $\psi=A(\phi)=A(\phi_1)+A(\phi_2)$, and since $A(\phi_1)\in M$ we have $A(\phi_2)=\psi-A(\phi_1)\in M\cap N=\{0\}$. Therefore $\psi=A(\phi_1)\in A(M)$. Thus $A(M)=M\cap A(\vr)$. By induction we have $(A|_M)^n(M)=A^n(M)=M\cap A^n(\vr)$ for all $n\in\N$. Assume that $A$ is semi-regular. Then
$$\kr(A|_M)=M\cap\kr(A)\subseteq M\cap A^n(\vr)=(A|_M)^n(\vr)\quad\text{for all}~~n\in\N.$$
Moreover $(A|_M)(M)=M\cap A(\vr)$ is closed, and hence $A|_M$ is semi-regular. In the same way we obtain that $A|_N$ is semi-regular. Conversely, if $A|_M$ and $A|_N$ are semi-regular, then $A(\vr)=A(M)\oplus A(N)$ is closed and
$$\kr(A)=\kr(A|_M)\oplus\kr(A|_N)\subseteq A^n(M)\oplus A^n(N)=A^n(\vr)$$
for all $n\in\N$. Therefore $A$ is semi-regular. As a consequence $R_\qu(A)$ is semi-regular if and only if $R_\qu(A)|_M$ and $R_\qu(A)|_N$ are semi-regular. Therefore $\ska(A)=\ska(A|_M)\cup\ska(A|_N)$.
\end{proof}

\section{Generalized Kato decomposition}
In this section we introduce an important property of decomposition for
bounded operators which involves the concept of semi-regularity and nilpotent nature. We define the generalized Kato decomposition in the quaternionic setting following its complex counterpart. For the complex theory we refer the reader to \cite{Ai,Ben}.
\begin{definition}\label{GD1}
An operator $A\in\B(\vr)$ is said to be nilpotent of order $d\in\N$ if $A^d=0$ while $A^{d-1}\not=0$. It is said to be quasi-nilpotent if $\displaystyle\lim_{n\rightarrow\infty}\|A^n\|^{1/n}=0$.
\end{definition}
\begin{proposition}\label{GP1}
	Let $A\in\B(\vr)$. If $A$ is quasi-nilpotent then $\sigma_S(A)=\{0\}$.
\end{proposition}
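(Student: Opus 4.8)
The plan is to combine the spectral radius formula already recorded in the text with the non-emptiness of the $S$-spectrum. The key observation is that Remark~\ref{BR1}(a) identifies the $S$-spectral radius of $A\in\B(\vr)$ as
$$r_S(A)=\lim_{n\rightarrow\infty}\|A^n\|^{1/n}=\max\{|\qu|~:~\qu\in\sigma_S(A)\}.$$
So the first step is simply to feed the quasi-nilpotency hypothesis $\displaystyle\lim_{n\rightarrow\infty}\|A^n\|^{1/n}=0$ into this formula to conclude $r_S(A)=0$.

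Next I would unwind what $r_S(A)=0$ means for the spectrum. Since $r_S(A)$ is the \emph{maximum} of $|\qu|$ over $\qu\in\sigma_S(A)$ (the max being attained because $\sigma_S(A)$ is compact by Proposition~\ref{PP1}), every $\qu\in\sigma_S(A)$ satisfies $|\qu|\leq r_S(A)=0$, hence $\qu=0$. This gives the inclusion $\sigma_S(A)\subseteq\{0\}$.

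Finally, to upgrade this to an equality I would invoke Proposition~\ref{PP1} once more: for any $A\in\B(\vr)$ the $S$-spectrum $\sigma_S(A)$ is a non-empty compact subset of $\quat$. Combining $\sigma_S(A)\neq\emptyset$ with $\sigma_S(A)\subseteq\{0\}$ forces $\sigma_S(A)=\{0\}$, which completes the proof.

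There is no genuine obstacle here; the only point that deserves a word of care is that the spectral radius is an attained maximum rather than a mere supremum, but this is guaranteed by the compactness part of Proposition~\ref{PP1}, so the argument is essentially a two-line citation chain. If one wanted to avoid explicitly invoking the ``$\max$'' form of $r_S(A)$, an alternative is to argue directly that any $\qu\neq 0$ lies in $\rho_S(A)$: from $r_S(A)=0$ and $|\qu|>0$ one has $|\qu|>r_S(A)$, and the standard Neumann-type series argument for $R_\qu(A)^{-1}$ (convergent once $\|A^n\|^{1/n}$ is eventually smaller than $|\qu|$) shows $R_\qu(A)\in\mathcal G(\vr)$; but routing through Remark~\ref{BR1}(a) is cleaner and is already available in the paper.
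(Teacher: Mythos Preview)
Your proof is correct and follows essentially the same route as the paper: both invoke the spectral radius formula $r_S(A)=\lim_{n\to\infty}\|A^n\|^{1/n}$ to conclude $r_S(A)=0$ and hence $\sigma_S(A)=\{0\}$. The only cosmetic difference is that the paper cites the external reference \cite{Jo} for the formula and the definition of $r_S(A)$, whereas you route through Remark~\ref{BR1}(a) and explicitly invoke non-emptiness from Proposition~\ref{PP1}; your version is, if anything, slightly more self-contained.
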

\begin{proof}
	The S-spectral radius of $A\in\B(\vr)$ is defined as $r_S(A)=\sup\{|\qu|~|~\qu\in\sigma_S(A)\}$, see \cite{Jo} page 90. By theorem 4.2.3 of \cite{Jo}, $\displaystyle r_S(A)=\lim_{n\rightarrow\infty}\|A^n\|^{1/n}.$ Therefore, if $A$ is quasi-nilpotent, then $r_S(A)=0$, and hence $\sigma_S(A)=\{0\}$.
\end{proof}
\begin{definition}\label{GD2}
	An operator $A\in\B(\vr)$ is said to admit a generalized Kato decomposition, abbreviated as GKD, if there exists a pair of $A$-invariant closed right linear subspaces $(M,N)$ such that $\vr=M\oplus N$, the restrictions $A|_M$ is semi-regular and $A|_N$ is quasi-nilpotent.
\end{definition}
For example, every semi-regular operator has a GKD $M=\vr$ and $N=\{0\}$. Every quasi-nilpotent operator has a GKD, $M=\{0\}$ and $N=\vr$.
\begin{definition}\label{GD3}
	In definition \ref{GD2}, if $A|_N$ is nilpotent then there exists $d\in\N$ such that $(A|_N)^d=0$. In this case $A$ is said to be Kato type of order $d$. In general any such operator is said to be of Kato type.
\end{definition}
\begin{definition}\label{GD4}
	An operator $A\in\B(\vr)$ is said to be essentially semi-regular if it admits a GKD $(M,N)$ such that $N$ is finite dimensional.
\end{definition}
\begin{proposition}\label{GP2}
	Every quasi-nilpotent operator on a finite dimensional $\vr$ is nilpotent.
\end{proposition}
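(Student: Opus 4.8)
The plan is to use only one consequence of quasi-nilpotency, namely that $\sigma_S(A)=\{0\}$ by Proposition \ref{GP1}, and then to exploit finite dimensionality through a Fitting-type decomposition together with Proposition \ref{su3}. First I would note that, since $\vr$ is finite dimensional, the increasing chain $\kr(A)\subseteq\kr(A^2)\subseteq\cdots$ and the decreasing chain $\ra(A)\supseteq\ra(A^2)\supseteq\cdots$ both stabilize; fix $d\in\N$ past the point where both are constant, so that $\kr(A^d)=N^\infty(A)$ and $\ra(A^d)=A^\infty(\vr)$, and in particular $\kr(A^d)=\kr(A^{2d})$ and $\ra(A^d)=\ra(A^{2d})$. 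Set $M=N^\infty(A)$ and $N=A^\infty(\vr)$; by Proposition \ref{HP1} these are $A$-invariant right linear subspaces, and being finite dimensional they are closed by Proposition \ref{ort}(c).

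The next step is to verify the direct-sum decomposition $\vr=M\oplus N$. For $M\cap N=\{0\}$: if $\phi=A^d\psi\in\kr(A^d)$ then $A^{2d}\psi=0$, so $\psi\in\kr(A^{2d})=\kr(A^d)$, whence $\phi=A^d\psi=0$. For $M+N=\vr$: given $\phi\in\vr$, one has $A^d\phi\in\ra(A^d)=\ra(A^{2d})$, say $A^d\phi=A^{2d}\chi$, and then $\phi=(\phi-A^d\chi)+A^d\chi$ with $\phi-A^d\chi\in\kr(A^d)=M$ and $A^d\chi\in\ra(A^d)=N$. On $N$ the operator $A$ maps $N=\ra(A^d)$ onto $\ra(A^{d+1})=N$ and is injective there, since $N\cap\kr(A)\subseteq\ra(A^d)\cap\kr(A^d)=M\cap N=\{0\}$; hence $A|_N$ is a bijection of $N$ and so is invertible in $\B(N)$ by the open mapping theorem (Theorem \ref{open}(b)). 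On $M$ we simply have $(A|_M)^d=0$.

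Finally I would apply Proposition \ref{su3}(c) to the decomposition $\vr=M\oplus N$ to get $\{0\}=\sigma_S(A)=\sigma_S(A|_M)\cup\sigma_S(A|_N)$. If $N\neq\{0\}$, then $\sigma_S(A|_N)$ is a non-empty subset of $\{0\}$ by Proposition \ref{PP1}, so $0\in\sigma_S(A|_N)$; on the other hand $A|_N$ is invertible, hence $R_0(A|_N)=(A|_N)^2$ is invertible by Proposition \ref{ST}, so $0\in\rho_S(A|_N)$ --- a contradiction. Therefore $N=A^\infty(\vr)=\ra(A^d)=\{0\}$, which says exactly that $A^d=0$, i.e. $A$ is nilpotent.

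The only point that warrants a little care is the stabilization of the kernel and range chains and the resulting Fitting decomposition: this is classical finite-dimensional linear algebra, but since the scalars form the non-commutative division ring $\quat$ one should make sure the argument uses only that nested chains of subspaces of a finite-dimensional space are eventually constant (which holds because $\vr$ has finite real dimension), rather than any commutative-field input; the verification above is arranged so that this is all that is needed. As an alternative one could instead regard $A$ as a real-linear operator on the underlying finite-dimensional real Hilbert space of $\vr$, observe that it is quasi-nilpotent in the ordinary sense, and conclude $A^d=0$ from the classical real result.
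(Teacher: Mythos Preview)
Your proof is correct and takes a genuinely different route from the paper. The paper argues in one line via the Jordan canonical form for quaternionic matrices: since $\sigma_S(A)=\{0\}$, the Jordan form of the $n\times n$ quaternionic matrix $A$ (citing an external reference on slice-hyperholomorphic Schur analysis) has only zeros on the diagonal and is therefore strictly upper-triangular, whence $A^k=0$ for some $k$.

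Your argument instead builds a Fitting decomposition $\vr=\kr(A^d)\oplus\ra(A^d)$ from the stabilization of the kernel and range chains, then uses Proposition~\ref{su3}(c) together with Proposition~\ref{PP1} and Proposition~\ref{ST} to force the invertible piece $A^\infty(\vr)$ to be trivial. This is more self-contained within the paper: it invokes no structure theorem for quaternionic matrices, only results already established here and the elementary fact that chains of subspaces stabilize in finite (real) dimension. The paper's route is shorter if one is willing to import the quaternionic Jordan form; yours is cleaner from the internal logic of the manuscript and makes transparent exactly where finite dimensionality enters. Your closing remark about passing to the underlying real vector space is also a valid shortcut, though the Fitting argument you wrote out is already division-ring-agnostic and needs no such detour.
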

\begin{proof}
	Suppose $\vr$ is finite dimensional, $\dim(\vr)=n<\infty$ and $A\in\B(\vr)$ is quasi-nilpotent. Then $\sigma_S(A)=\{0\}$, also $A$ is an $n\times n$ quaternionic invertible matrix. Since $\sigma_S(A)=\{0\}$, by the Jordan canonical form, $A$ is similar to a matrix whose only non-zero entries are on the super-diagonal (see \cite{Al} section 4.3). In turn this is equivalent to $A^k=0$ for some $k\in\N$. 
\end{proof}
\begin{remark}\label{GR1}
	From proposition \ref{GP2}, if $A\in\B(\vr)$ is essentially semi-regular then $A|_N$ is nilpotent. Thus we have the following implications:\\
	$A$ is semi-regular~$\Rightarrow$~ $A$ is essentially semi-regular~$\Rightarrow$ $A$ is of Kato type.
\end{remark}
\begin{theorem}\label{1.41}
	Suppose that $(M,N)$ is a GKD for $A\in\B(\vr)$. Then we have
	\begin{enumerate}
		\item [(a)] $K(A)=K(A|_M)$ and $K(A)$ is closed.
		\item[(b)] $\kr(A|_M)=\kr(A)\cap M=K(A)\cap\kr(A)$
	\end{enumerate}
\end{theorem}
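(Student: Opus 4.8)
The plan is to establish $K(A)=K(A|_M)$ first; both assertions then follow quickly from this identity together with the semi-regular facts of Section~7. The inclusion $K(A|_M)\subseteq K(A)$ is immediate: a witnessing sequence $\{u_n\}\subseteq M$ and constant $\delta>0$ for $\phi\in K(A|_M)$ (in the sense of Definition~\ref{AD1}) is also a witnessing sequence in $\vr$, since $A|_M u_{n+1}=Au_{n+1}$ whenever $u_{n+1}\in M$.

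For the reverse inclusion I would take $\phi\in K(A)$ with $\{u_n\}\subseteq\vr$ and $\delta>0$ as in Definition~\ref{AD1}, and split $u_n=x_n+y_n$ along $\vr=M\oplus N$. Since $M$ and $N$ are $A$-invariant, the relation $Au_{n+1}=u_n$ decomposes into $A|_M x_{n+1}=x_n$ and $A|_N y_{n+1}=y_n$; as $M,N$ are closed and $\vr=M\oplus N$ is a topological direct sum, the projections $P_M,P_N$ are bounded, so $\|x_n\|\le\|P_M\|\,\delta^n\|\phi\|$ and $\|y_n\|\le\|P_N\|\,\delta^n\|\phi\|$ for every $n$. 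Iterating the second relation gives $y_0=(A|_N)^n y_n$, hence $\|y_0\|\le\|(A|_N)^n\|\,\|P_N\|\,\delta^n\|\phi\|$ for all $n$; since $A|_N$ is quasi-nilpotent, $\|(A|_N)^n\|^{1/n}\to 0$, so $\|(A|_N)^n\|\,\delta^n\to 0$ and therefore $y_0=0$, i.e.\ $\phi=x_0\in M$. After replacing $\delta$ by $\delta':=\|P_M\|\,\delta$ to absorb the projection constant into the geometric bound (using $\|P_M\|\ge 1$; the case $M=\{0\}$ is trivial), the sequence $\{x_n\}\subseteq M$ witnesses $\phi\in K(A|_M)$. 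This gives $K(A)=K(A|_M)$, and in particular $K(A)\subseteq M$.

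With $K(A)=K(A|_M)$ in hand, part~(a) follows because $A|_M$ is semi-regular, so Theorem~\ref{AT4} (applied to $A|_M\in\B(M)$) yields $K(A|_M)=C(A|_M)=(A|_M)^\infty(M)$, a closed subspace of $M$ and hence of $\vr$. For part~(b): $\kr(A|_M)=\kr(A)\cap M$ is immediate from the definition of the restriction; semi-regularity of $A|_M$ together with Theorem~\ref{HT1}, Corollary~\ref{HC1} and Theorem~\ref{AT4} gives $\kr(A|_M)\subseteq(A|_M)^\infty(M)=K(A|_M)$; and then, using $K(A)=K(A|_M)\subseteq M$, any $z\in K(A)\cap\kr(A)$ lies in $M\cap\kr(A)=\kr(A|_M)$, while conversely $\kr(A|_M)\subseteq K(A|_M)\cap\kr(A)=K(A)\cap\kr(A)$, so $K(A)\cap\kr(A)=\kr(A|_M)=\kr(A)\cap M$.

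I expect the only genuinely nonroutine point to be the vanishing of the $N$-component $y_0$: this rests on the elementary but decisive observation that a quasi-nilpotent operator has trivial analytic core (no nonzero vector admits a chain of preimages with geometric norm control), which is exactly what confines $K(A)$ to the semi-regular summand $M$. Everything else — boundedness of the oblique projections, the rescaling of the geometric constant, and the set-theoretic bookkeeping with the identities $K=C=(\,\cdot\,)^\infty$ on $M$ — is routine.
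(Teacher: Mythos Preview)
Your proof is correct and is essentially the standard argument for the complex case (Theorem~1.41 in \cite{Ai}); the paper itself does not spell out a proof but simply refers to that complex counterpart, so there is nothing to compare beyond noting that your write-up is exactly the argument the citation points to. The one place that merits a remark is your rescaling $\delta'=\|P_M\|\,\delta$: this works precisely because $\|P_M\|\ge 1$ when $M\neq\{0\}$, so $\|P_M\|\,\delta^n\le(\|P_M\|\,\delta)^n$ for $n\ge 1$, which you correctly flagged.
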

\begin{proof}
	A proof follows its complex counterpart. For a complex proof see theorem 1.41 in \cite{Ai}.
\end{proof}
\begin{theorem}\label{1.42}
	Let $A\in\B(\vr)$, and assume that $A$ is of Kato type of order $d\in\N$ with a GKD $(M,N)$. Then,
	\begin{enumerate}
		\item [(a)] $K(A)=A^\infty(\vr)$;
		\item[(b)] $\kr(A|_M)=\kr(A)\cap A^\infty(\vr)=\kr(A)\cap A^n(\vr)$ for all $d\leq n\in\N$;
		\item[(c)] $A(\vr)+\kr(A^n)=A(M)\oplus N$ for all $d\leq n\in\N$. Moreover $A(\vr)+\kr(A^n)$ is closed in $\vr$.
	\end{enumerate}
\end{theorem}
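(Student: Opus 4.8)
The plan is to reduce everything to the direct-sum structure $\vr=M\oplus N$ furnished by the GKD, in which $A|_M$ is semi-regular and, because $A$ is of Kato type of order $d$, $A|_N$ is nilpotent with $(A|_N)^d=0$. First I would record the elementary direct-sum bookkeeping: since $M$ and $N$ are closed, $A$-invariant and $M\cap N=\{0\}$, we have $A^n(\vr)=A^n(M)\oplus A^n(N)$ and $\kr(A^n)=\kr((A|_M)^n)\oplus\kr((A|_N)^n)$ for every $n\in\N$. Nilpotency of order $d$ gives $A^n(N)=\{0\}$ and $\kr((A|_N)^n)=N$ for every $n\geq d$, hence $A^n(\vr)=A^n(M)$ and $\kr(A^n)=\kr((A|_M)^n)\oplus N$ for $n\geq d$; intersecting the first of these over $n$ yields $A^\infty(\vr)=(A|_M)^\infty(M)$.

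For part (a): since $A|_M$ is semi-regular, Theorem \ref{AT4} applied to $A|_M$ gives $K(A|_M)=(A|_M)^\infty(M)$, which by the previous observation equals $A^\infty(\vr)$; combining this with Theorem \ref{1.41}(a), $K(A)=K(A|_M)=A^\infty(\vr)$. For part (b): Theorem \ref{1.41}(b) already supplies $\kr(A|_M)=\kr(A)\cap M=K(A)\cap\kr(A)$, and by (a) this last set is $\kr(A)\cap A^\infty(\vr)$. It then remains to prove $\kr(A)\cap A^n(\vr)=\kr(A)\cap A^\infty(\vr)$ for $n\geq d$; the inclusion $\supseteq$ is immediate, and for $\subseteq$ I would take $\phi\in\kr(A)\cap A^n(\vr)$, note that $A^n(\vr)=A^n(M)\subseteq M$ forces $\phi\in\kr(A)\cap M=\kr(A|_M)$, and then invoke semi-regularity of $A|_M$ via Corollary \ref{HC1}(i) to conclude $\phi\in\kr(A|_M)\subseteq(A|_M)^\infty(M)=A^\infty(\vr)$.

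For part (c): for $n\geq d$ I would write $A(\vr)+\kr(A^n)=(A(M)\oplus A(N))+(\kr((A|_M)^n)\oplus N)$ and regroup the $M$- and $N$-summands. In the $N$-part, $A(N)+N=N$ since $A(N)\subseteq N$. In the $M$-part, semi-regularity of $A|_M$ together with condition (b) of Theorem \ref{HT1} gives $\kr((A|_M)^n)\subseteq(A|_M)(M)=A(M)$, so $A(M)+\kr((A|_M)^n)=A(M)$; hence $A(\vr)+\kr(A^n)=A(M)\oplus N$. Closedness follows because $A(M)=\ra(A|_M)$ is closed in $M$ (semi-regular operators have closed range by Definition \ref{HD3}) and hence closed in $\vr$, $N$ is closed, and the canonical map $M\times N\to\vr$, $(\phi,\psi)\mapsto\phi+\psi$, is a topological isomorphism (as used in the proof of Proposition \ref{su3}), so it carries the closed set $A(M)\times N$ onto a closed subset of $\vr$.

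I expect the one step needing genuine care to be the second equality in (b): the key realization is that for $n\geq d$ the space $A^n(\vr)$ is already contained in $M$, which confines the intersection $\kr(A)\cap A^n(\vr)$ to the semi-regular summand, where the hyper-kernel/hyper-range inclusions of Theorem \ref{HT1} and Corollary \ref{HC1} apply. Everything else is a routine assembly of the direct-sum identities with Theorems \ref{1.41}, \ref{AT4} and \ref{HT1}.
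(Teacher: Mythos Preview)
Your proposal is correct and follows essentially the same route as the complex proof in Aiena to which the paper defers: reduce to the direct-sum identities $A^n(\vr)=A^n(M)\oplus A^n(N)$ and $\kr(A^n)=\kr((A|_M)^n)\oplus\kr((A|_N)^n)$, exploit nilpotency on $N$ to collapse these for $n\geq d$, and then combine Theorem~\ref{1.41} with Theorem~\ref{AT4} and Corollary~\ref{HC1} on the semi-regular summand. The closedness argument via the topological isomorphism $M\times N\to M\oplus N$ is exactly the device the paper itself uses elsewhere (e.g.\ in the proofs of Proposition~\ref{su3} and Theorem~\ref{1.44}).
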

\begin{proof}
	A proof follows its complex counterpart. For a complex proof see theorem 1.42 in \cite{Ai}.
\end{proof}
\begin{theorem}\label{1.44}
	Let $A\in\B(\vr)$ be of Kato type. Then there exists an open quaternion sphere $B_\quat(0,\epsilon)\subseteq\quat$ for which $R_\qu(A)$ is semi-regular for all $\qu\in B_\quat(0,\epsilon)\setminus\{0\}$.
\end{theorem}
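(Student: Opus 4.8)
The plan is to use the generalized Kato decomposition of $A$ to split the problem into a ``semi-regular part'' on $M$ and a ``nilpotent part'' on $N$, and then to patch the two pieces back together via Theorem~\ref{1.46AII}. Concretely, fix a GKD $(M,N)$ of $A$, so that $\vr=M\oplus N$ with $M,N$ closed and $A$-invariant, $A|_M$ semi-regular, and $(A|_N)^d=0$ for some $d\in\N$ (this is exactly what being of Kato type provides). Since $M$ and $N$ are $A$-invariant they are $A^2$-invariant, hence $R_\qu(A)$ leaves each of $M,N$ invariant and $R_\qu(A)|_M=R_\qu(A|_M)$, $R_\qu(A)|_N=R_\qu(A|_N)$. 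By the last line of the proof of Theorem~\ref{1.46AII} (applied with $A$ replaced by $R_\qu(A)$), $R_\qu(A)$ is semi-regular if and only if both $R_\qu(A|_M)$ and $R_\qu(A|_N)$ are semi-regular; equivalently $\rka(A)=\rka(A|_M)\cap\rka(A|_N)$.

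For the $M$-part I would argue that $\gamma(A|_M)>0$: if $M=\{0\}$ this is the convention $\gamma=\infty$; if $M\neq\{0\}$ then $A|_M\neq 0$ (otherwise semi-regularity of $A|_M$ forces $\kr(A|_M)=\{0\}$, i.e.\ $M=\{0\}$), so $A|_M$ has closed range and Theorem~\ref{HT3} gives $\gamma(A|_M)>0$. Recalling from Remark~\ref{KR0} that $\beta(\qu)=|q_0|+\sqrt{|q_0|^2+|\qu|^2}\to 0$ as $\qu\to 0$, I can choose $\epsilon>0$ so that $\beta(\qu)<\gamma(A|_M)$ whenever $|\qu|<\epsilon$. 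Theorem~\ref{KT1}, applied to the semi-regular operator $A|_M$, then yields that $R_\qu(A|_M)$ is semi-regular for every $\qu\in B_\quat(0,\epsilon)$.

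For the $N$-part, $A|_N$ is nilpotent, hence quasi-nilpotent, so $\sigma_S(A|_N)=\{0\}$ by Proposition~\ref{GP1}. Therefore for every $\qu\neq 0$ we have $\qu\in\rho_S(A|_N)$, so $R_\qu(A|_N)$ is invertible, hence bounded below by the bounded inverse theorem, hence semi-regular by Proposition~\ref{HP4}; that is, $\quat\setminus\{0\}\subseteq\rka(A|_N)$. Combining the two parts: for $0<|\qu|<\epsilon$ one has $\qu\in\rka(A|_M)\cap\rka(A|_N)=\rka(A)$, i.e.\ $R_\qu(A)$ is semi-regular, which is the assertion.

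I do not expect a genuine obstacle here; the whole argument is bookkeeping with theorems already in hand. The only points needing care are (i) checking that Theorem~\ref{1.46AII} is legitimately applied with $R_\qu(A)$ in place of $A$ (which is recorded in its proof), and (ii) treating the degenerate cases $M=\{0\}$ or $N=\{0\}$ so that $\gamma(A|_M)>0$ holds unconditionally and so that the statement is not vacuous. It is also worth remarking that the restriction to $\qu\neq 0$ is genuinely needed, since $R_0(A|_N)=(A|_N)^2$ is nilpotent and generally not semi-regular unless $N=\{0\}$.
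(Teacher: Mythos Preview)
Your argument is correct, but it follows a genuinely different route from the paper's proof. The paper does not invoke Theorem~\ref{1.46AII}; instead it verifies the two semi-regularity conditions for $R_\qu(A)$ directly. For closedness of the range it writes $R_\qu(A)(\vr)=R_\qu(A)(M)\oplus N$ (using bijectivity of $R_\qu(A|_N)$ for $\qu\neq 0$) and pushes the closed subspace $R_\qu(A)(M)\times N$ through the topological isomorphism $M\times N\to M\oplus N$. For the kernel condition it invokes the analytical-core machinery: by Theorems~\ref{1.41}, \ref{1.42} the hyper-range $A^\infty(\vr)=K(A)$ is closed and $A$-invariant with $A$ surjective on it, so (via Proposition~\ref{KP1} and Theorem~\ref{AT2}) $A^\infty(\vr)\subseteq R_\qu(A)^\infty(\vr)$ for $\beta(\qu)<\gamma(A|_{A^\infty(\vr)})$; then Theorem~\ref{1.3}(b) gives $N^\infty(R_\qu(A))\subseteq (A^2)^\infty(\vr)\subseteq A^\infty(\vr)$, yielding the required inclusion. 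The final $\epsilon$ is $\min\{\gamma(A|_{A^\infty(\vr)}),\gamma(A|_M)\}$.

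Your approach short-circuits all of the analytical-core and coprime-polynomial work by reducing the question to the two restrictions via Theorem~\ref{1.46AII} and then handling each piece with results already on hand (Theorem~\ref{KT1} for the $M$-part, the spectral characterization of nilpotents for the $N$-part). This is shorter and arguably more transparent; the only organizational caveat is that Theorem~\ref{1.46AII} is stated \emph{after} Theorem~\ref{1.44} in the paper, but its proof is self-contained and does not cite Theorem~\ref{1.44}, so there is no circularity. The paper's longer route, on the other hand, makes visible how the hyper-range and analytical core enter the picture, which is closer to the classical complex arguments and may have independent expository value.
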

\begin{proof}
Let $(M,N)$ be a GKD for $A$ such that $A|_N$ is nilpotent.\\
Claim: $R_\qu(A)(\vr)$ is closed for all $\qu\in\quat$ for which $\gamma(A|_M)>\beta(\qu)$.\\
Since $A|_N$ is nilpotent, $R_\qu(A|_N)$ is bijective for all $\qu\not=0$. Thus, $N=R_\qu(A|_N)(N)$ for all $\qu\not=0$, and therefore
$$R_\qu(A)(\vr)=R_\qu(A)(M)\oplus R_\qu(A)(N)=R_\qu(A)(M)\oplus N,~~\text{for all}~~\qu\not=0.$$
By assumption $A|_M$ is semi-regular, so by theorem \ref{KT1} $R_\qu(A|_M)$ is semi-regular for all $\qu$ for which $\gamma(A|_M)>\beta(\qu)$. So $R_\qu(A|_M)$ is a closed subspace of $M$ for all $\qu$ for which $\gamma(A|_M)>\beta(\qu)$.
Consider the Hilbert space $M\times N$ provided with the canonical norm $$\|(\phi,\psi)\|=\|\phi\|+\|\psi\|,~~~\phi\in M,~~\psi\in N$$
and let $\Psi:M\times N\longrightarrow M\oplus N=\vr$ denote the topological isomorphism defined by $\Psi(\phi,\psi)=\phi+\psi$ for every $\phi\in M$ and $\psi\in N$. Then, for all $\qu$ for which $\gamma(A|_M)>\beta(\qu)$, since the set $R_\qu(A)(M)\times N$ is closed in $M\times N$, the set
$$\Psi(R_\qu(A)(M)\times N)=R_\qu(A)(M)\oplus N=R_\qu(A)(\vr)$$
is closed.\\
{\em Claim}: There is an open ball $B_\quat(0,\epsilon)$ such that $N^\infty(R_\qu(A))\subseteq R_\qu(A)^\infty(\vr)$ for all $\qu\in B_\quat(0,\epsilon)\setminus\{0\}$.\\
Since $A$ is of Kato type, by theorem \ref{1.41} and theorem \ref{1.42}, the hyper-range is closed and coincides with $K(A)$, consequently by theorem \ref{AT1}, $A(A^\infty(\vr))=A^\infty(\vr)$. Let $A_0=A|_{A^\infty(\vr)}$. The operator $A_0$ is onto and hence, by part (b) of proposition \ref{KP1}, $ R_\qu(A_0)$ is onto for all $\qu$ for which $\gamma(A_0)>\beta(\qu)$. Therefore $R_\qu(A)(A^\infty(\vr))=A^\infty(\vr)$  for all $\qu$ for which $\gamma(A_0)>\beta(\qu)$. Then, by theorem \ref{AT2}, $A^\infty(\vr)$ is closed, and we infer that
$A^\infty(\vr)\subseteq K(R_\qu(A))\subseteq R_\qu(A)^\infty(\vr)$  for all $\qu$ for which $\gamma(A_0)>\beta(\qu)$. By theorem \ref{1.3} part (b), we conclude that
\begin{equation}\label{GE1}
N^\infty(R_\qu(A))\subseteq (A^2)^\infty(\vr)\subseteq A^\infty(\vr)\subseteq R_\qu(A)^\infty(\vr)
\end{equation}
 for all $\qu\not=0$ for which $\gamma(A_0)>\beta(\qu)$. The inclusion in equation \ref{GE1} together with $R_\qu(A)(\vr)$ being closed for all $\qu$ for which $\gamma(A|_M)>\beta(\qu)$, then imply the semi-regularity of $R_\qu(A)$ for $0<|\qu|\leq\beta(\qu)<\epsilon$, where $\epsilon=\min\{\gamma(A_0), \gamma(A|_M)\}>0.$
\end{proof}
\begin{definition}\label{GD5}
	Let $A\in\B(\vr)$, then the generalized Kato S-spectrum is defined as
	$$\sgka(A)=\{\qu\in\quat~~|~~R_\qu(A)~~\text{is not of Kato type}\}$$
	and the generalized Kato S-resolvent is $\rgka(A)=\quat\setminus\sgka(A)$. The essentially S-semi-regular spectrum and its resolvent are defined, respectively, by
	$$\ses(A)=\{\qu\in\quat~~|~~R_\qu(A)~~\text{is not essentially semi-regular}\}$$
	and $\res(A)=\quat\setminus\ses(A)$.
\end{definition}
From remark \ref{GR1}, clearly, for $A\in\B(\vr)$, we have $$\sgka(A)\subseteq\ses(A)\subseteq\ska(A)\subseteq\sigma_S(A).$$
\begin{corollary}\label{1.45}
	If $A\in\B(\vr)$, then $\sgka(A)$ and $\ses(A)$ are compact subsets of $\sigma_S(A)$. Moreover, $\ska(A)\setminus\sgka(A)$ and $\ses(A)\setminus\sgka(A)$ consists of at most countably many isolated points.
\end{corollary}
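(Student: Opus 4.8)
First, $\sgka(A)$ and $\ses(A)$ are contained in $\ska(A)\subseteq\sigma_S(A)$ by the inclusions recorded right after Definition~\ref{GD5}, and $\sigma_S(A)$ is compact (Proposition~\ref{PP1}); since a closed subset of a compact set is compact, the first assertion reduces to showing that $\rgka(A)$ and $\res(A)$ are open. Everything will be deduced from a single \emph{punctured-neighbourhood lemma}: \emph{if $R_{\qu_0}(A)$ is of Kato type, then there is $\eps>0$ such that $R_\qu(A)$ is semi-regular for every $\qu$ with $0<\text{dist}(\qu,[\qu_0])<\eps$}, where $[\qu_0]=\{\,x+yI:I\in\mathbb{S}\,\}$ (writing $\qu_0=x+yI_0$, $y\ge 0$) is the sphere on which $\qu\mapsto R_\qu(A)$ is constant, equal to $R_{\qu_0}(A)$. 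Granting the lemma, if $\qu_0\in\rgka(A)$ then each $\qu$ in that punctured neighbourhood has $R_\qu(A)$ semi-regular, hence $\qu\in\rka(A)\subseteq\rgka(A)$ (Remark~\ref{KR1}(a) and Remark~\ref{GR1}), while each $\qu\in[\qu_0]$ has $R_\qu(A)=R_{\qu_0}(A)$ of Kato type, so $\qu\in\rgka(A)$; thus $B_\quat(\qu_0,\eps)\subseteq\rgka(A)$, and $\rgka(A)$ is open. Since an essentially semi-regular operator is in particular of Kato type, the lemma applies equally at any $\qu_0\in\res(A)$, and because semi-regular operators are essentially semi-regular (Remark~\ref{GR1}) the same computation gives $B_\quat(\qu_0,\eps)\subseteq\res(A)$; hence $\res(A)$ is open as well, and $\sgka(A)$, $\ses(A)$ are compact.

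For the countability part, fix $\qu_0\in\ska(A)\setminus\sgka(A)$. Then $\qu_0\in\rgka(A)$, so the lemma supplies $\eps>0$ with $B_\quat(\qu_0,\eps)\setminus[\qu_0]\subseteq\rka(A)=\quat\setminus\ska(A)$, i.e.\ $\ska(A)\cap B_\quat(\qu_0,\eps)\subseteq[\qu_0]$, so the sphere $[\qu_0]$ is isolated in $\ska(A)$ (a single isolated point when $\qu_0\in\mathbb{R}$). Choosing one representative in each such sphere and halving the radii yields pairwise disjoint open balls, and separability of $\quat\cong\mathbb{R}^4$ allows only countably many; hence $\ska(A)\setminus\sgka(A)$ is a union of at most countably many isolated points/spheres of $\ska(A)$. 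The same argument with $\res(A)$ in place of $\rka(A)$ (semi-regular $\Rightarrow$ essentially semi-regular) shows $\ses(A)\setminus\sgka(A)$ consists of at most countably many isolated points/spheres of $\ses(A)$.

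It remains to prove the lemma, and the plan is to repeat the scheme of Theorem~\ref{1.44} with a generalized Kato decomposition \emph{of $R_{\qu_0}(A)$} in place of one of $A$. Choose a GKD $(M,N)$ of $R_{\qu_0}(A)$ with $R_{\qu_0}(A)|_N$ nilpotent and with $M,N$ hyper-invariant for $R_{\qu_0}(A)$ (possible as in the complex theory, e.g.\ $M$ the analytic core and $N$ the associated quasi-nilpotent subspace, cf.\ Theorems~\ref{1.41}, \ref{1.42}, \ref{AT1}, \ref{AT2}); since $A$ commutes with $R_{\qu_0}(A)$, both $M$ and $N$ are $A$-invariant, and, $R_\qu(A)$ being a polynomial in $A$, one has $R_\qu(A)=R_\qu(A|_M)\oplus R_\qu(A|_N)$ for all $\qu$. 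On $N$: since $R_{\qu_0}(A|_N)=R_{\qu_0}(A)|_N$ is nilpotent, Proposition~\ref{GP1} together with a spectral-mapping argument for the real quadratic $t\mapsto t^2-2\text{Re}(\qu_0)t+|\qu_0|^2$ (whose zero set is exactly $[\qu_0]$) forces $\sigma_S(A|_N)\subseteq[\qu_0]$; hence $R_\qu(A|_N)$ is invertible --- a fortiori semi-regular --- for every $\qu\notin[\qu_0]$, just as the nilpotent summand is treated in Theorem~\ref{1.44}. On $M$: $R_{\qu_0}(A|_M)$ is semi-regular, and one has to show $R_\qu(A|_M)$ stays semi-regular for $\qu$ close to $\qu_0$; on the parts of $M$ on which $R_{\qu_0}(A)$ acts surjectively (the analytic core, by Theorem~\ref{AT1}(b)) or is bounded below, this is immediate from closedness of $\sus$ (Proposition~\ref{su2}) and of $\apo$ (Theorem~\ref{T1}), since surjective and bounded-below operators are semi-regular (Proposition~\ref{HP4}). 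Combining the two summands via Theorem~\ref{1.46AII}, applied to $R_\qu(A)$ with the $R_\qu(A)$-invariant decomposition $\vr=M\oplus N$, then gives the lemma.

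The crux is the residual ``semi-regular but not surjective'' situation in the $M$-part. In the complex setting $R_\qu(A)$ is the scalar translate $R_{\qu_0}(A)-(\qu-\qu_0)\Iop$ of $R_{\qu_0}(A)$, so Kato's classical stability of semi-regularity under small perturbations $\lambda\Iop$ finishes the job; here instead $R_\qu(A)-R_{\qu_0}(A)=-2(\text{Re}\,\qu-\text{Re}\,\qu_0)A+(|\qu|^2-|\qu_0|^2)\Iop$ genuinely involves $A$, and the only real recentring available, $B:=A-\text{Re}(\qu_0)\Iop$, turns $\qu_0$ into a purely imaginary point rather than into $0$, so neither Theorem~\ref{1.44} nor Theorem~\ref{KT1} applies verbatim. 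Making the semi-regular summand argument rigorous for non-real $\qu_0$ --- for instance through a careful refinement of the Kato decomposition of $R_{\qu_0}(A)$ into pieces on which $R_{\qu_0}(A)$ is surjective, bounded below, or invertible (so that only the stable spectra $\sus,\apo$ intervene), or by exploiting the slice structure inside the factorisation $R_\qu(A)=(A-\qu\Iop)(A-\oqu\Iop)$ --- is the decisive step, and is exactly the sort of obstruction already flagged in the Introduction; a minor point along the way is transferring to $\vr$ the complex fact that the GKD subspaces of a Kato-type operator may be chosen hyper-invariant.
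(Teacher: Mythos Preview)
Your overall strategy mirrors the paper's: reduce compactness to openness of $\rgka(A)$ and $\res(A)$, and derive both openness and the isolated-points statement from a punctured-neighbourhood principle saying that Kato-type at $\qu_0$ forces semi-regularity of $R_\qu(A)$ nearby. The paper's proof is much shorter than yours: it simply invokes Theorem~\ref{1.44} for both conclusions and moves on, without discussing the translation from $\qu_0=0$ to an arbitrary base point.

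The genuine gap in your proposal is the one you yourself flag in the final paragraph: the stability of semi-regularity on the $M$-summand near a non-real $\qu_0$. You correctly observe that $R_\qu(A)-R_{\qu_0}(A)=-2(\text{Re}\,\qu-\text{Re}\,\qu_0)A+(|\qu|^2-|\qu_0|^2)\Iop$ is not a scalar perturbation, so neither Theorem~\ref{KT1} nor Theorem~\ref{1.44} applies verbatim, and the recentring $B=A-\text{Re}(\qu_0)\Iop$ does not reduce to the case treated there. Your suggested remedies (further splitting $M$ into surjective/bounded-below pieces, or exploiting the slice factorisation) are only sketched, and the hyper-invariance of the GKD pair $(M,N)$ --- which you need so that $M$ and $N$ are $A$-invariant and hence $R_\qu(A)$-invariant --- is borrowed from the complex theory without justification in the quaternionic setting. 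As written, therefore, the argument is incomplete. It is fair to add that the paper's own proof is terse on precisely this point: Theorem~\ref{1.44}, as stated, assumes $A$ itself is of Kato type and yields semi-regularity of $R_\qu(A)$ near $0$, and the passage to an arbitrary $\qu_0\in\rgka(A)$ is not spelled out there either; your concern is well placed, but you have not resolved it.
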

\begin{proof}
	From theorem \ref{1.44}, clearly $\rgka(A)=\quat\setminus\sgka(A)$ and  $\res(A)=\quat\setminus\ses(A)$ are open subsets of $\quat$, and hence $\sgka(A)$ and $\ses(A)$ are closed subsets of the compact set $\sigma_S(A)$. Therefore, $\sgka(A)$ and $\ses(A)$ are compact subsets of $\sigma_S(A)$. If $\qu_0\in\ses(A)\setminus\sgka(A)$ then $R_\qu(A)$ is semi-regular as $\qu$ belongs to a suitable punctured ball centered at $\qu_0$. Hence, $\ses(A)\setminus\sgka(A)$ consists of at most countably many isolated points, and the same argument is true for  $\ska(A)\setminus\sgka(A)$.
\end{proof}
	
\section{conclusion}
We have studied the surjectivity S-spectrum, Kato S-spectrum, generalized Kato spectrum, essentially semi-regular S-spectrum and  approximate S-point spectrum of a bounded right linear operator on a right quaternionic Hilbert space $\vr$ without introducing a left multiplication in $\vr$. We have also established various connections between these spectra. In particular, we have proved that the Kato S-spectrum is a non-empty compact subset of the S-spectrum.\\

We have also introduced and studied local S-spectrum $\sigma_A(\phi)$ at a point $\phi\in\vr$ and the local S-spectral subspace $X_A(F)$ of a bounded right linear operator $A$ associated with a set $F$ to certain extent. In the complex theory, the local spectrum $\sigma_A(\phi)$ and local spectral set $X_A(F)$ play an important part, as theory itself, in establishing several important results regarding the Kato, generalized Kato and many other parts of the spectrum. In particular, the equality, for a vector $\phi$ in the complex Hilbert space $\HI$ and $\lambda\in\C$,
\begin{equation}\label{CE}
\sigma_A(\phi)=\sigma_A(f(\lambda)),
\end{equation}
where $f:U\longrightarrow\HI$ is an analytic function defined in an open neighborhood $U$ of $\lambda$ for which $(A-\lambda\mathbb{I}_{\HI})f(\mu)=\phi$ for all $\mu\in U$, see theorem 2.2 in \cite{Ai} or theorem 1.2.14 in \cite{La}. Unfortunately, under the current set up of the manuscript, we have experienced difficulty in establishing an identity similar to equation \ref{CE}. This fact have affected our ability in establishing several results valid in the complex case to quaternions. In particular, we have shown that the generalized Kato S-spectrum is a compact subset of the S-spectrum, however, we were unable to show that the non-isolated points of $\partial\sigma_S(A)$ belongs to $\sgka(A)$ which is the case in the complex setting.

\section{Acknowledgments}
K. Thirulogasanthar would like to thank the FRQNT, Fonds de la Recherche  Nature et  Technologies (Quebec, Canada) for partial financial support under the grant number 2017-CO-201915. Part of this work was done while he was visiting the University of Jaffna to which he expresses his thanks for the hospitality.

\end{document}